\newcommand{\ot}{\otimes}
\newcommand{\bt}{\bowtie}
\newcommand{\ds}{\displaystyle}
\def\BZ{{\mathbb Z}}
\def\BC{{\mathbb C}}
\def\BR{{\mathbb R}}
\def\BN{{\mathbb N}}
\newtheorem{thm}{Theorem}[subsection]
\newtheorem{df}[thm]{Definition}
\newtheorem{remark}[thm]{Remark}
\newtheorem{ex}[thm]{Example}
\newtheorem{cor}[thm]{Corollary}
\newtheorem{prop}[thm]{Proposition}
\newtheorem{lem}[thm]{Lemma}
\newtheorem{qu}[thm]{Question}
\newcommand{\num}{\refstepcounter{thm}}
\title{Computing Higher Indicators for the Double of a Symmetric Group}
\author{Rebecca Courter}
\address{University of Southern California, Los Angeles, CA 90089-1113}
\curraddr{Pasadena City College, Pasadena, CA 91106}
\email{rebeccep@usc.edu, recourter@pasadena.edu}
\begin{document}

\thanks{The author was supported by NSF grant DMS 1001547.}

\begin{abstract} In this paper we explicitly determine all indicators for the Drinfel'd doubles of the symmetric group acting upon up to 10 objects.  We explore when distinct characters give exactly the same indicators and when the indicators have a zero value.  We find that the indicators are all non-negative integers, which supports our conjecture that the indicators for the Drinfel'd double of any symmetric group will be non-negative integers, just as they are for the symmetric groups themselves.
\end{abstract}
 
\maketitle


\section{Introduction}

For an irreducible representation of a group over the complex numbers, the classical Frobenius-Schur indicator determines whether or not the representation is defined over the reals.  One may also define higher indicators for representations of a group, so that the classical one is the second indicator.  The classical Frobenius-Schur indicator extends to any semisimple Hopf algebra [LM], and the higher indicators also extend to Hopf algebras [KSZ2]. 

Indicators, including their higher analogues, are invariants that are proving very useful in the study of Hopf algebras.  They have been used in classification problems [K] [NS1]; in studying possible dimensions of the representations of a semisimple Hopf algebra [KSZ1]; and in determining the prime divisiors of the exponent of a Hopf algebra [KSZ2] [NS2].  Moreover, the indicator is invariant under equivalence of monoidal categories [MaN].  Another area where indicators are proving useful is conformal field theory; see the work of Bantay [B1] [B2].  The notion of higher indicators has also been extended to more general categories [NS1] [NS2] [NS3], where quasi-Hopf algebras play a unifying role [N1] [N2].

Frobenius and Schur gave a formula for the classic indicator $\nu$ of a representation of a finite group $G$ [Se].  They showed that for any irreducible representation, $\nu$ was always $1,0,\text{ or }-1$, and $\nu = 1$ occured precisely when the representation could be defined over $\BR$.  Similarily [LM] gave a version of this formula for a Hopf algebra $H$, and again for $V$ a simple $H$-module, with character $\chi$ and indicator $\nu(V) = \nu(\chi)$, the only possible values of $\nu(V)$ are 0, 1, and -1.  A little earlier the formula was extended for the special case of Kac algebras over $\BC$ in [FGSV].   The symmetric group $S_n$ has been an object of interest for years.  In the 1940's Young showed that the indicator for any ireducible representation of any symmetric group $S_n$ is 1.  This fact was later extended to the Hopf algebra $D(S_n)$ in [KMM].

The higher indicators of modules of finite groups are denoted $\nu_m$ for any $m\in \BZ$, in which the classical Frobenius-Schur indicator $\nu$ is equal to $\nu_2$.  [KSZ2] extended the higher indicator formula to Hopf algebras.  It is well known that the higher indicators for modules over groups are integers, but for Hopf algebras in general the values may involve roots of unity.  However, for $G$ a finite group, its Drinfel'd double $D(G)$ is a nice Hopf algebra, with a braiding on its module category, so one should expect it to behave better.  

In his dissertation [Ke], Keilberg examined the Drinfel'd double of any dihedral group, semi-dihedral group, and some other classes of non-abelian groups of order $pq$.  He proved that all indicators of these Hopf algebras are integers by explicitly finding their values.  He also proved that for the Drinfel'd double of the diheadral group, the indicators are also non-negative.  As seen before with the classical indicator, the symmetric group is also a very nice object to work with.  Scharf proved that all higher indicators of a simple $S_n$-module are \it non-negative \rm integers [S].  Whether or not this stronger statement of non-negativity is true for a simple module of $D(S_n)$, was the motivating question for this work.  This thesis gives positive supporting evidence for this conjecture.
  
All the indicators of $D(S_n)$ in this thesis were calculated using GAP, which stands for ``Groups, Algorithms, Programming."  GAP is a mathematical software system for computational discrete algebra, with particular emphasis on computational Group Theory.  [GAP]

This dissertation is organized as follows: in Section 2 we first review general definitions and theorems, and describe how we compute the higher indicators.  We then consider how automorphisms of $G$ affect characters and indicators of $D(G)$.  In Section 3 we find a more computer-efficient formula to calculate the indicators, we show that we need only find a few higher indicators in order to find them all, and we make observations about what our calculations show.  We derive lemmas and propositions describing when representation have the same set of indicator values (that is, when they are I-equivalent), why certain indicators are 0, and when the set we use to calculate the indicators is not empty.  In Section 4 we present the indicators and specific details about how they were calculated for $D(S_3)$ through $D(S_6)$.  In Section 5 we present the GAP functions written and used to do our calculations, as well as a few others used to give additional information.  Appendix A contains the character tables used to compute the indicators for $D(S_3)$ through $D(S_6)$.  Due to the large number of tables required to display the indicators for $D(S_7)$ through $D(S_{10})$, those are recorded in Appendix B which is not included in this paper: Appendix B can be found at \verb+http://www.pasadena.edu/files/syllabi/recourter_31371.pdf+.  Also due to the large amount of data computed, no details about how the indicators of $D(S_7)$ through $D(S_{10})$ were calculated are given (as was done for the smaller doubles in Section 4).



\section{Preliminaries}

\subsection{}
\bf Definitions and Notation \rm   

Let $k$ be an algebraically closed field of characterist 0.

\begin{df}\label{hopf} \rm A finite dimensional \it Hopf algebra \rm over $k$ is a sextuple ($H,\text{m},\text{u},\Delta, \epsilon, S$) where $H$ is a $k$-vector space, ($H,\text{m},\text{u}$) is an algebra with associative multiplication $\text{m}: H \otimes H \rightarrow H$ and unit $\text{u}: k \rightarrow H$, ($H, \Delta, \epsilon)$ is a coalgebra with coassociative comultiplication $\Delta: H \rightarrow H \otimes H $ and counit $\epsilon: H \rightarrow k$ such that both $\Delta$ and $\epsilon$ are algebra maps, and the antipode $S: H \rightarrow H$ is an inverse for the identity under convolution. We use the sigma notation for comultiplication,
$$\Delta (h) = \sum h_1 \otimes h_2, \text{ for } h \in H.$$
Thus in sigma notation, $S$ satisfies for each $h \in H$, 
$$\sum S(h_1)h_2 = \sum h_1 S(h_2) = \epsilon(h)1.$$ 
\end{df}
\noindent For a reference see [Mo].

To define the Drinfel'd double of a group we must first define the pieces that make up the double.

\begin{df}\label{groupalgebra} \rm For any group $G$, the \it group algebra \rm over the complex numbers is the collection of formal sums $$\BC G = \left \{\ \sum \alpha_g g \ \ | \ \ \alpha_g \in \BC\ \right \},$$ 
where addition is defined by $\alpha g + \beta g = (\alpha+\beta)g$, for all $\alpha, \beta \in \BC$, $g \in G$ and extended component wise, and multiplication is defined by $\alpha g \cdot \beta h = \alpha\beta (gh)$, for all $\alpha, \beta \in \BC$, $g, h \in G$ and extended component wise.\end{df}

\begin{df} \label{dual}\rm
The \it dual space \rm of $\BC G$, notated by $(\BC G)^*$ or $\BC^G$, is the vector space of all $\BC$-linear maps from $\BC G \rightarrow \BC$.  Its basis is the collection of ``coordinate functions" $\{p_g\ \ | \  g \in G\}$ where for $x\in G$, 
$$p_g(x) \ = \ \delta_{g,x} \ = \ \left\{ \begin{array}{rl} 1 & g=x\\ 0 & g\neq x\end{array} \right..$$\normalsize
For $g,h,x \in G, \alpha \in \BC$, it's addition is defined by $(p_g + p_h)(x) = p_g(x) + p_h(x)$, and scalar multiplication defined by $(\alpha p_g)(x) = p_g(\alpha x) = \alpha \cdot p_g(x)$.  $\BC^G$ is also an algebra with multiplication defined by $(p_g \cdot p_h)(x) = p_g(x) \cdot p_h(x)= \delta_{g,h}p_g(x)$.
 \end{df}

\noindent We remark that both $\BC G$ and $\BC^G$ are coalgebras and Hopf algebras.  The coalgebra structure of $\BC G$ is given by $\Delta(\sum \alpha_g g) = \sum\alpha_g \Delta(g) = \sum\alpha_g(g \otimes g) = \sum\alpha_g g \otimes g$, and $\epsilon(\sum\alpha_g g) = \sum\alpha_g \epsilon(g) = \sum\alpha_g$.  The coalgebra structure of $\BC^G$ is given by $\ds \Delta(p_g) = \sum_{uv=g} p_u \otimes p_v$, and the counit is $\epsilon(p_g) = 1$.

We can further define an action $\rightharpoonup :G \otimes \BC^G \rightarrow \BC^G$ of $G$ on $\BC^G$ by, 
$$x \rightharpoonup p_g := p_{xgx^{-1}}.$$\normalsize
  With this action, we can now define the Drinfel'd double of a group.

\begin{df}\label{double} \rm [Mo]  The \it Drinfel'd double \rm of a group $G$, denoted by $D(G) = \BC^G \bowtie \BC G$ is a semisimple Hopf algebra with basis elements written as $p_g \bowtie x $, where $g,x \in G$ and $p_g \in \BC^G$.  Multiplication is defined by 
\begin{align*}  \left( p_k \bowtie z \right) \cdot \left( p_h \bowtie y \right):& = p_k\ (z \rightharpoonup p_h) \bowtie zy \ =\  p_k\ p_{zhz^{-1}} \bowtie zy\\ & = \delta_{k,zhz^{-1}}\ p_k \bowtie zy. \end{align*} \normalsize
Comultiplication is defined by 
$$ \Delta(p_g \bt x) : = \sum_{h \in G} (p_h \bt x) \ot (p_{h^{-1}g} \bt x). $$\normalsize 
The counit is given by $ \epsilon(p_g \bowtie x) = \delta_{g,1}$,
 and the antipode is given by $S(p_g \bt x):= p_{x^{-1}g^{-1}x} \bt x^{-1}$. 
\end{df}

A further equivalent notation to use for $D(G)$ is the smash product $\BC^G \# \BC G$, where the elements can be written as $p_g \# x$, or simply $p_gx$.  For a reference see [KMM]. \newline

Before we can define indicators, we must first discuss representations and characters.  For a reference see [Se].  For an arbitrary group $G$:

\begin{df} \rm A \it representation of $G$ over $\BC$ \rm is a group homomorphism \newline $\rho: G \rightarrow GL_n(\mathbb{C})$.  The degree of $\rho$ is $n$.  
\end{df}

\begin{df} A \it group algebra representation \rm is an extension of a group representation $\rho$ to the group algebra $\BC G$ and is given by $\tilde{\rho} : \BC G \rightarrow M_n(\BC)$, where $$\tilde{\rho}\left(\sum \beta_i\ g_i \right) \ = \ \sum \beta_i\ \rho(g_i).$$ \end{df}

Any degree $n$ group representation $\rho$ determines an $n$ dimensional $\BC G$-module (and hense a $G$-module) $V \subseteq \BC G$ with basis $\{ v_1, \ldots, v_n\}$.  Any element $v \in V$ written in terms of the basis $v = \alpha_1v_1 + \ldots + \alpha_nv_n$ can be written as a column vector [$\alpha_i$].  Then the module action is matrix multiplication given by $g \cdot v  := \ \rho(g) [\alpha_i] \in V$.  Similarily, group algebra representations determine $\BC G$-modules.  Since the representation determines the module, we also refer to the module $V$ as a representation.  

\begin{df} \rm A representation is \it irreducible \rm if its module is simple.\end{df}

\begin{df}\label{character} \rm The \it character \rm $\chi$ of a representation $\rho$ of $G$ is the matrix trace of the image of the representation in $GL_n(\BC)$.  That is, $\chi(g) = \text{trace}(\rho(g))$.  The \it degree \rm of $\chi$ is the degree of $\rho$, and if $\rho$ is one-dimensional, then we call $\chi$ a linear character.  If $\rho$ is irreducible, then we also call $\chi$ an irreducible character.
\end{df}

\begin{df}\label{FSdef}\rm  Let $V$ be a representation of the group $G$, with character $\chi$, and $m \geq 0$.  Then the \it $m^{\text{th}}$ Frobenius-Schur indicator \rm of $V$ is given by 
 $$\nu_m(V) = \nu_m(\chi): =\ \chi \left (\ \frac{1}{|G|}\ \sum_{g \in G} \ g^m\ \right ) \  = \ \frac{1}{|G|}\ \sum_{g \in G}\ \chi (g^m) \ \in \BC.$$\normalsize 
\end{df}

This definition also clearly extends to representations of a group algebra, but to define the Frobenius-Schur indicators on a Hopf algebra such as $D(G)$, we must first extend the notion of a representation. 

\subsection{}
\bf Representations and Indicators of $D(G)$ \rm   

For a finite group $G$, [KSZ2] gave a formula for the higher Frobenius-Schur indicators of $D(G)$, which we include at the end of this section.  The indicators are defined on representations of $D(G)$, and since any representation can be built from irreducible representations, we must be able to find all the irreducible representations of $D(G)$ before we can give the formula for finding the higher indicators.

To find the irreducible representations of $D(G)$, we do the following: \newline
1.   Choose one $g$ in each distinct conjugacy class of $G$. \newline
2.   For this $g$, let $C_G(g)$ be the centralizer of $g \in G$. \newline
3.   Let $V$ be a simple $\BC C_G(g)$-module corresponding to an irreducible representation of $C_G(g)$. \newline
4.   Define $\hat{V} = \BC G \bigotimes_{\BC C_G(g)} V$, the induced module.  Note:  $\hat{V}$ is a $\BC G$-module, but not simple as a $\BC G$-module. \newline
5.   Define a specific action of $\BC^G = (\BC G)^*$ on $\hat{V}$ so then $\hat{V}$ is a $D(G)$-module.  (See below) \newline
6.   With this action $\hat{V}$ is a simple $D(G)$-module (or corresponds to an irreducible representation) and all irreducible representations for $D(G)$ correspond to one of these $\hat{V}$'s.

Steps 4, 5, and 6 are explicitly described in the following lemma.

\begin{lem}\label{repsofDG}\rm [KMM] \it Let $H=D(G)$ be the Drinfel'd double of $G$, and let $C_G(g)$ be the centralizer of $g \in G$. Let $V$ be a left $\BC C_G(g)$-module, and let $\hat{V} = \BC G \bigotimes_{\BC C_G(g)} V$.  Then $\hat{V}$ is a left $H$-module, via the action
$$(p_h \bowtie x) \cdot [y \otimes v]\ =\ \delta_{xygy^{-1},h} \ xy \otimes v$$\normalsize
 for $h\in G,\ x,y \in C_G(g),$ and $v \in V$.  Furthermore, if $V$ is a simple left $\BC C_G(g)$-module, then $\hat{V}$ is a simple left $H$-module.  Conversely every simple left $H$-module is isomorphic to $\hat{V}$ for some simple module $V$ of $\BC C_G(g)$, where $g$ ranges over a choice of one element in each conjugace class of $G$.
\end{lem}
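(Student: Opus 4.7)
The plan is to establish three facts: (a) the formula defines a left $D(G)$-module structure on $\hat V$; (b) $\hat V$ is simple whenever $V$ is; and (c) every simple $D(G)$-module is isomorphic to some such $\hat V$. Part (a) is a direct computation. Well-definedness on $\BC G \otimes_{\BC C_G(g)} V$ requires that replacing $y \otimes v$ by $yc \otimes c^{-1}v$ for $c \in C_G(g)$ leaves the value unchanged, which works because $c$ commutes with $g$ and so the conjugacy factor in the delta is unaffected. Compatibility with the multiplication of $D(G)$ reduces to the identity $(zy)g(zy)^{-1} = z(ygy^{-1})z^{-1}$, and the unit $\sum_{h \in G}(p_h \bowtie 1)$ acts as the identity on each $[y \otimes v]$ since the delta collapses the sum to a single nonzero term. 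I would expect no surprises here.

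The key structural observation for (b) is that the orthogonal idempotents $\{p_h \bowtie 1 : h \in G\}$ induce a $G$-grading $\hat V = \bigoplus_{h \in G} \hat V_h$, and from the action formula $\hat V_h$ is nonzero precisely when $h$ is conjugate to $g$. The embedding $v \mapsto [1 \otimes v]$ identifies $\hat V_g$ with $V$ as a $\BC C_G(g)$-module, and the $\BC G$-action permutes the homogeneous pieces transitively by conjugation. Any $D(G)$-submodule $W \subseteq \hat V$ is then automatically graded because each projection $p_h$ belongs to $D(G)$, so $W \cap \hat V_g$ is a $\BC C_G(g)$-submodule of $V$; simplicity of $V$ forces this intersection to be $0$ or $V$, and either outcome propagates across the orbit by the transitive $\BC G$-action to give $W = 0$ or $W = \hat V$.

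For (c), let $M$ be a simple $D(G)$-module. Decomposing $M = \bigoplus_{h \in G} M_h$ via the $p_h$, the subspace $\bigoplus_{h \in \CO} M_h$ attached to a conjugacy class $\CO$ is $D(G)$-stable (since $\BC G$ permutes the graded pieces by conjugation), so by simplicity exactly one class contributes, say that of $g$. Then $M_g$ becomes a $\BC C_G(g)$-module, and I would define a $D(G)$-homomorphism $\BC G \otimes_{\BC C_G(g)} M_g \to M$ by $y \otimes m \mapsto y \cdot m$, which is surjective by the transitivity argument. The final step is to show $M_g$ is itself $\BC C_G(g)$-simple: any proper submodule $U \subsetneq M_g$ would yield via the same construction a proper nonzero $D(G)$-submodule of $M$ (with $g$-component equal to $U$), contradicting simplicity of $M$. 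Once $M_g$ is known simple, part (b) makes the induced module simple, and a surjection between simples is an isomorphism. The main obstacle is assembling (c) cleanly: one must juggle the $G$-grading, the transitivity of the $\BC G$-action, and the centralizer-module structure in tandem to extract the correct simple $V = M_g$.
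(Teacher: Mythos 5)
The paper does not prove this lemma; it is quoted from [KMM] without proof, so there is no in-paper argument to compare against. Your outline is the standard (and correct) one: the $G$-grading by the orthogonal idempotents $p_h\bowtie 1$, the identification $\hat V_g\cong V$ as a $\BC C_G(g)$-module, transitivity of the $\BC G$-action on the nonzero graded pieces, and the evaluation map $y\otimes m\mapsto y\cdot m$ for the converse --- this is essentially the proof in [KMM]. One small remark: you have (rightly) read the action as $\delta_{(xy)g(xy)^{-1},h}\,xy\otimes v$ with $x,y$ ranging over $G$; that is the reading under which the module axioms and your well-definedness check go through, the restriction ``$x,y\in C_G(g)$'' in the statement being a slip.
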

 
For a finite dimensional semisimple Hopf algebra $H$, since $\Delta$ is coassociative, we may define $$\Delta^2(h) = (\Delta \otimes \text{id}) \circ \Delta(h) = (\text{id} \otimes \Delta) \circ \Delta(h) = \sum h_1\otimes h_2 \otimes h_3$$ and inductively define $\ds \Delta^n(h) =  (\Delta \otimes \text{id}) \circ \Delta^{n-1}(h) = \sum h_1 \otimes \cdots \otimes h_{n+1}$ for $n \geq 2$.  
We also define $h^{[n]} = \text{m}^{n-1} \circ \Delta^{n-1}(h) = \sum h_1 \cdots h_n$ for any $h \in H$. 

Recall an integral in a Hopf algebra is a non-zero invariant under multiplication, that is $t \neq 0 \in H$ is an \it integral \rm if $ht = \epsilon(h)t = th$ for all $h \in H$.
 Since $H$ is semisimple, by Maschke's theorem $\epsilon(t) \neq 0$.  We will let $\Lambda$ denote the unique integral of $H$ with $\epsilon(\Lambda) = 1$.  

It is not difficult to see that when $H = D(G)$,
\begin{equation}\num \label{lambda}\Lambda:=\ p_1  \bt  \left(\ \frac{1}{|G|}\ \sum_{g \in G} g\  \right)\ = \ \frac{1}{|G|}\ \sum_{g \in G} p_1 \bt g, \end{equation}\normalsize
For an example, see [Mo, Thm. 10.3.12].

Using the definition of $h^{[m]}$ above, we see
 $$\Lambda^{[m]}:=\ \sum \ \Lambda_1 \Lambda_2 \cdot \cdot \cdot \Lambda_m.$$ \normalsize
Now we are ready to give the definition of higher indicators on representations of the Drinfel'd double of a group.

\begin{df}\label{def of FS ind} \rm
Given a simple $D(G)$-module $V$ and its character $\chi$ the \it $m^\text{th}$ Frobenius-Schur indicator \rm of $V$ (or $\chi$) is 
$$\nu_m(V) = \nu_m(\chi) = \chi(\Lambda^{[m]}), \ \ m \in \BN.$$\normalsize
\end{df}

In order to work with these indicators we need a little more notation and some formulas for $\Lambda^{[m]}$.

Our notation for the following definition is a modification of the notation used in [KSZ2, 7.2].

\begin{df}\label{Gmug} \rm Let $G$ be a finite group.  For any $x, y \in G$ and $m \in \BN$, define
$$\begin{array}{rl}
G_m(g) & = \left \{\ x\in G\ : \ \ds \prod_{j=0}^{m-1} x^{-j} g x^j = 1\ \right \} \\
G_m(g,y) & = \left \{ x\in G\ : \ \ds \prod_{j=0}^{m-1} x^{-j} g x^j = 1 \text{ and } x^m = y \right \} \\
z_m(g,y) & =\ |\ G_m(g,y)\ |.
\end{array}$$\normalsize
\end{df}
From [KSZ2] we are taking $F=G$ and $k=1$, which makes our $G_m(g,y)$ and $z_m(g,y)$ precisely their $G_{m,1}(g,y)$ and $z_{m,1}(g,y)$.

\begin{prop} \rm [KSZ2, 7.3] \it Let $G$ be a finite group and let $\Lambda$ be the integral of $D(G)$ in Equation \ref{lambda}.  Then 
$$\begin{array}{rl}
\ds \Lambda^{[m]} & =\ds \ \frac{1}{|G|}\  \sum_{g,y \in G} \ z_m(g,y) \ p_g \bt y \\
 & = \ds \ \frac{1}{|G|}\  \sum_{\substack{ x \in G_m(g), \\ g\in G}} p_g \bt x^m.
\end{array}$$\normalsize
\end{prop}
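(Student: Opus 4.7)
The plan is to compute $\Lambda^{[m]} = \sum \Lambda_1\Lambda_2\cdots\Lambda_m$ directly from the explicit formula $\Lambda = \frac{1}{|G|}\sum_{x\in G} p_1 \bowtie x$ by first expanding $\Delta^{m-1}$ on each basis element $p_1 \bowtie x$ and then multiplying out the resulting tensor factors. The two equalities in the proposition are then seen to be just two ways of bookkeeping the same sum: the second expresses it as a sum over $G_m(g)$, and the first groups the terms by the value $y = x^m$, giving the count $z_m(g,y)$.

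First, I would apply the given formula $\Delta(p_g \bowtie x) = \sum_{h\in G}(p_h \bowtie x) \otimes (p_{h^{-1}g} \bowtie x)$ iteratively to obtain
\begin{equation*}
\Delta^{m-1}(p_1 \bowtie x) \;=\; \sum_{h_1 h_2 \cdots h_m = 1} (p_{h_1} \bowtie x) \otimes (p_{h_2} \bowtie x) \otimes \cdots \otimes (p_{h_m} \bowtie x).
\end{equation*}
Next, applying $\mathrm{m}^{m-1}$ and using the multiplication rule $(p_k \bowtie z)(p_h \bowtie y) = \delta_{k,zhz^{-1}}\, p_k \bowtie zy$ repeatedly, a simple induction on $j$ shows
\begin{equation*}
(p_{h_1}\bowtie x)(p_{h_2}\bowtie x)\cdots(p_{h_j}\bowtie x) \;=\; \Bigl(\prod_{i=1}^{j-1} \delta_{h_1,\, x^i h_{i+1} x^{-i}}\Bigr)\, p_{h_1} \bowtie x^j.
\end{equation*}
Setting $j=m$ and combining with the constraint $h_1 h_2\cdots h_m = 1$, the Kronecker deltas force $h_{i+1} = x^{-i} h_1 x^i$ for each $i$, and the product condition becomes $\prod_{j=0}^{m-1} x^{-j} h_1 x^j = 1$. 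Writing $g = h_1$, this says precisely $x \in G_m(g)$, so
\begin{equation*}
(p_1 \bowtie x)^{[m]} \;=\; \sum_{g\in G,\; x\in G_m(g)} p_g \bowtie x^m.
\end{equation*}

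To finish, I would sum over $x$ with the factor $\frac{1}{|G|}$ to obtain $\Lambda^{[m]} = \frac{1}{|G|} \sum_{g\in G} \sum_{x\in G_m(g)} p_g \bowtie x^m$, which is the second equality. The first equality is immediate by partitioning the inner sum according to the value $y = x^m$: the number of $x\in G_m(g)$ with $x^m = y$ is by definition $z_m(g,y)$, so the sum becomes $\frac{1}{|G|}\sum_{g,y\in G} z_m(g,y)\, p_g \bowtie y$.

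The main obstacle, and really the only nontrivial step, is the inductive bookkeeping for the iterated product: one must check that multiplying an additional $(p_{h_{j+1}}\bowtie x)$ on the right leaves $p_{h_1}$ unchanged (rather than projecting onto a new idempotent) while upgrading $x^j$ to $x^{j+1}$, and that the cumulative delta conditions $h_1 = x^i h_{i+1} x^{-i}$ combine cleanly with $h_1\cdots h_m = 1$ to produce the single, symmetric condition defining $G_m(g)$. Once this identification is made, both formulas drop out at once.
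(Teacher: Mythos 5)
Your proof is correct. The paper itself does not prove this proposition --- it is quoted from [KSZ2, 7.3] without argument --- so your direct verification is a genuine addition rather than a reproduction of the paper's reasoning. Each step checks out: the iterated coproduct $\Delta^{m-1}(p_1 \bowtie x) = \sum_{h_1\cdots h_m = 1}\bigotimes_i (p_{h_i}\bowtie x)$ follows by induction from the coproduct formula in Definition \ref{double}; the inductive product identity is exactly the multiplication rule applied with $z = x^j$, giving $\delta_{h_1,\,x^j h_{j+1} x^{-j}}\,p_{h_1}\bowtie x^{j+1}$; the deltas together with $h_1\cdots h_m = 1$ collapse to the condition $\prod_{j=0}^{m-1}x^{-j}gx^j = 1$ defining $G_m(g)$; and linearity of $h\mapsto h^{[m]}$ plus the partition of $G_m(g)$ by the value of $x^m$ yields both displayed formulas.
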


\begin{cor}\label{old indicator formula}\rm [KSZ2, 7.4] \it Let  $\chi$ be an irreducible character of $D(G)$ induced from an irreducible character $\eta$ of $C_G(u)$ as described in Definition \ref{character} and Lemma \ref{repsofDG}.  Then 
$$\begin{array}{rl}
\ds \nu_m(\chi) & =\ds \ \frac{1}{|G|}\  \sum_{\substack{ x \in G_m(g),\\ g\in G}}\ \chi( p_g \bt x^m) \\
 & = \ds \ \frac{1}{|C_G(u)|}\  \sum_{y  \in C_G(u)} \ z_m(u,y) \ \eta(y).
\end{array}$$\normalsize
\end{cor}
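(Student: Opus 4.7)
The plan is to derive both equalities directly from the previous Proposition together with the induced-module description in Lemma \ref{repsofDG}. The first equality is essentially immediate. Starting from Definition \ref{def of FS ind}, $\nu_m(\chi) = \chi(\Lambda^{[m]})$, I would substitute the second formula for $\Lambda^{[m]}$ from the Proposition and apply $\chi$ (using $\BC$-linearity). This gives
$$\nu_m(\chi) = \frac{1}{|G|} \sum_{\substack{x \in G_m(g),\\ g \in G}} \chi(p_g \bt x^m),$$
which is the first equality. No further work is needed for it.

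For the second equality, I would start from the alternative expression $\Lambda^{[m]} = \frac{1}{|G|}\sum_{g,y \in G} z_m(g,y)\, p_g \bt y$, so that
$$\nu_m(\chi) = \frac{1}{|G|} \sum_{g,y \in G} z_m(g,y)\, \chi(p_g \bt y).$$
The central computation is then to evaluate $\chi(p_g \bt y)$ for the induced character. Using Lemma \ref{repsofDG}, I decompose $\hat V = \bigoplus_{x} x \ot V$ over a set of coset representatives $x$ of $G/C_G(u)$, and compute the trace of $p_g \bt y$ block by block. A straightforward check using the action formula shows that the contribution from the block $x \ot V$ is nonzero only when $x^{-1}yx \in C_G(u)$ and $g = xux^{-1}$, in which case it equals $\eta(x^{-1}yx)$. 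This produces
$$\chi(p_g \bt y) = \frac{1}{|C_G(u)|} \sum_{\substack{x \in G,\ xux^{-1}=g,\\ x^{-1}yx \in C_G(u)}} \eta(x^{-1}yx),$$
where the factor $1/|C_G(u)|$ absorbs the choice of coset representatives.

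Now I would substitute this into the double sum and change variables. For each admissible $x$, let $c = x^{-1}yx \in C_G(u)$, so $g = xux^{-1}$ and $y = xcx^{-1}$. The key observation is that the condition $\prod_{j=0}^{m-1} w^{-j}g w^j = 1$ defining $G_m(g)$ is preserved under simultaneous conjugation of $g$ and $y = w^m$, so $z_m(xux^{-1}, xcx^{-1}) = z_m(u,c)$. Interchanging sums, I obtain
$$\nu_m(\chi) = \frac{1}{|G|\,|C_G(u)|} \sum_{x \in G} \sum_{c \in C_G(u)} z_m(u,c)\, \eta(c),$$
and the inner variables no longer depend on $x$. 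Summing over $x \in G$ produces a factor of $|G|$, yielding the desired formula
$$\nu_m(\chi) = \frac{1}{|C_G(u)|} \sum_{y \in C_G(u)} z_m(u,y)\, \eta(y).$$

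The main obstacle will be carefully verifying the induced-character computation: tracking the action of $p_g \bt y$ across the tensor $\otimes_{\BC C_G(u)}$, and confirming that the only contribution to the trace on the block $x \ot V$ is exactly $\eta(x^{-1}yx)$ under the stated conditions. Once that local computation is correct, the conjugation-invariance of $z_m$ and the change of variables are routine bookkeeping.
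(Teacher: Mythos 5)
Your argument is correct. Note that the paper itself gives no proof of this Corollary -- it is quoted from [KSZ2, 7.4] -- so there is nothing to compare line by line; but your derivation is the standard one and all the steps check out. The first equality is indeed immediate from Definition \ref{def of FS ind} and the second expression for $\Lambda^{[m]}$ in the preceding Proposition. For the second equality, the local trace computation works exactly as you describe: on the block $x \ot V$ of $\hat V = \bigoplus_x\, x \ot V$, the element $p_g \bt y$ sends $x \ot v$ to $\delta_{yxux^{-1}y^{-1},\,g}\ yx \ot v$, which lands back in $x \ot V$ precisely when $x^{-1}yx \in C_G(u)$; in that case $yxux^{-1}y^{-1} = xux^{-1}$, so the Kronecker condition reduces to $g = xux^{-1}$ and the block contributes $\eta(x^{-1}yx)$. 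One point worth making explicit is that replacing the sum over coset representatives by $\frac{1}{|C_G(u)|}\sum_{x \in G}$ is legitimate because both the conditions ($xux^{-1}=g$, $x^{-1}yx \in C_G(u)$) and the value $\eta(x^{-1}yx)$ are unchanged when $x$ is replaced by $xc$ with $c \in C_G(u)$ (the latter since $\eta$ is a class function on $C_G(u)$). The conjugation-equivariance $z_m(xux^{-1}, xcx^{-1}) = z_m(u,c)$ follows from the bijection $w \mapsto xwx^{-1}$ between $G_m(u,c)$ and $G_m(xux^{-1},xcx^{-1})$, and the final change of variables then produces the factor $|G|$ that cancels as you claim.
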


\begin{remark}\label{triv induced}\rm It is worth noting that when $u=$ id, the identity element, the centralizer $C_{G}(u)$ is all of $G$.  Thus if $\chi$ is an irreducible character of $D(G)$ induced from an irreducible character $\eta$ of $C_{G}(u) = G$, then the formula in Corollary \ref{old indicator formula} becomes
$$\ds \nu_m(\chi)\  = \ \frac{1}{|G|}\  \sum_{y \in G} \ z_m(\text{id},y) \ \eta(y)\  =  \ \frac{1}{|G|}\  \sum_{h \in G} \ \eta(h^m)\ =\ \nu_m(\eta).$$
where the last equivalence comes from Definition \ref{FSdef} and $\nu_m(\eta)$ is an indicator of $G$.  Thus any character of $D(G)$ trivially induced from a character $\eta$ of $C_{G}(\text{id}) = G$ has the same indicator values as $\eta$.
\end{remark}

\subsection{}
\bf Extending Group Automorphisms to $D(G)$ \rm   

We now describe how a group automorphism of $G$ can be extended to a Hopf automorphism on $D(G)$.

\begin{lem}\label{extendgroupalg} \rm For a group $G$, an automorphism $\sigma: G \rightarrow G$ extends to an automorphism of the group algebra $kG$ via $$\sigma(\sum\alpha_g g) = \sum \alpha_g \sigma(g).$$  
\begin{proof}Multiplication is preserved since $\sigma(\alpha g \cdot \beta h) = \sigma(\alpha\beta gh) =$ $\alpha\beta\sigma(gh)$ $ = \alpha\beta\sigma(g)\sigma(h)$  $= \alpha\sigma(g)\beta\sigma(h) = \sigma(\alpha g) \cdot \sigma(\beta h)$.
\end{proof} \end{lem}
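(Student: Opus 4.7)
The plan is to verify the four properties that make $\tilde\sigma:kG\to kG$, defined by $\tilde\sigma(\sum\alpha_g g)=\sum\alpha_g\sigma(g)$, an algebra automorphism: well-definedness, $k$-linearity, multiplicativity, and bijectivity. Since $\sigma$ is a bijection on $G$, the set $\{\sigma(g):g\in G\}$ is simply a permutation of the basis $\{g:g\in G\}$ of $kG$, so $\tilde\sigma$ is unambiguously defined by its values on a basis, and $k$-linearity is built into the formula. These two steps are essentially free.

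For multiplicativity, I would first check on basis elements, where $\tilde\sigma(g\cdot h)=\sigma(gh)=\sigma(g)\sigma(h)=\tilde\sigma(g)\tilde\sigma(h)$ using that $\sigma$ is a group homomorphism, and then extend by $k$-bilinearity of the multiplication in $kG$. Equivalently one can compute directly on arbitrary elements
\[
\tilde\sigma\!\left(\sum\alpha_g g\cdot\sum\beta_h h\right)
=\sum\alpha_g\beta_h\,\sigma(gh)
=\sum\alpha_g\beta_h\,\sigma(g)\sigma(h)
=\tilde\sigma\!\left(\sum\alpha_g g\right)\cdot\tilde\sigma\!\left(\sum\beta_h h\right),
\]
which is the route the author sketches on basis monomials.

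For bijectivity, I would observe that $\sigma^{-1}:G\to G$ is also a group automorphism, so by the same construction it yields a $k$-linear map $\widetilde{\sigma^{-1}}:kG\to kG$. On the basis one checks $\widetilde{\sigma^{-1}}\circ\tilde\sigma$ and $\tilde\sigma\circ\widetilde{\sigma^{-1}}$ both fix every $g\in G$, hence both compositions equal the identity on $kG$ by linearity. Thus $\tilde\sigma$ is an algebra automorphism.

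There is no real obstacle here; the content is bookkeeping and the only thing to take care with is making explicit that $\sigma$ being bijective on the basis is what gives both well-definedness and invertibility for free, while the homomorphism property of $\sigma$ is what promotes the induced linear map to an algebra homomorphism.
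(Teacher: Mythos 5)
Your proof is correct and follows essentially the same route as the paper: the multiplicativity check via $\sigma(gh)=\sigma(g)\sigma(h)$ extended bilinearly is exactly the computation the paper gives. You additionally verify bijectivity using $\widetilde{\sigma^{-1}}$, which the paper leaves implicit but which is needed to conclude ``automorphism'' rather than merely ``algebra endomorphism,'' so your write-up is slightly more complete.
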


\begin{lem}\rm The $\sigma$ of Lemma \ref{extendgroupalg} further extends to the dual of $kG$, via for $f \in G^k$, (where $f: kG \rightarrow k$), $\sigma(f(g)) = f(\sigma(g))$.  When we consider the basis elements of $G^k$, namely $p_x$ where $x\in G$, we see that $$\sigma(p_x) = p_{\sigma^{-1}(x)}.$$\normalsize

\begin{proof}  $\sigma(p_x(g)) = p_x(\sigma(g)) = \left\{ \begin{array}{ll} 1 & x = \sigma(g) \\ 0 & x \neq \sigma(g) \end{array}\right. = \left\{ \begin{array}{ll} 1 & \sigma^{-1}(x) = g \\ 0 & \sigma^{-1}(x) \neq g \end{array}\right. = p_{\sigma^{-1}(x)}(g)$, $\forall \ g \in G$.

Thus since $p_x \cdot p_y =  \delta_{x,y}p_x$, we confirm that this extention does indeed perserve multiplication on $G^k$: \newline
$\sigma(p_x \cdot p_y) = \sigma(\delta_{x,y}p_x) = \delta_{x,y}p_{\sigma^{-1}(x)}$ \normalsize and \newline $\sigma(p_x) \cdot \sigma(p_y) = p_{\sigma^{-1}(x)} \cdot p_{\sigma^{-1}(y)} = \delta_{\sigma^{-1}(x), \sigma^{-1}(y)} p_{\sigma^{-1}(x)}$ \normalsize and since \newline $x=y \Leftrightarrow \sigma^{-1}(x) = \sigma^{-1}(y) $, that means $\delta_{x,y} = \delta_{\sigma^{-1}(x), \sigma^{-1}(y)}$, \normalsize thus multiplication is preserved. \end{proof}\end{lem}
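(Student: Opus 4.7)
The plan is to verify the lemma by a direct computation on the basis $\{p_x : x \in G\}$, and then check multiplicativity to conclude that the extended map is an algebra automorphism of $(kG)^*$.

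First I would unpack the defining formula $\sigma(p_x)(g) = p_x(\sigma(g))$ at a generic $g \in G$. Since $p_x(\sigma(g)) = \delta_{x,\sigma(g)}$ and $\sigma$ is a bijection on $G$, the condition $\sigma(g) = x$ is equivalent to $g = \sigma^{-1}(x)$, so $\delta_{x,\sigma(g)} = \delta_{\sigma^{-1}(x),g} = p_{\sigma^{-1}(x)}(g)$. Equating functions pointwise on $G$ gives $\sigma(p_x) = p_{\sigma^{-1}(x)}$, the formula asserted by the lemma.

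Next I would check that $\sigma$ respects the algebra multiplication on $(kG)^*$. Using the identity $p_x \cdot p_y = \delta_{x,y}\, p_x$, I would compute $\sigma(p_x \cdot p_y) = \delta_{x,y}\, p_{\sigma^{-1}(x)}$ on one hand and $\sigma(p_x) \cdot \sigma(p_y) = p_{\sigma^{-1}(x)} \cdot p_{\sigma^{-1}(y)} = \delta_{\sigma^{-1}(x),\sigma^{-1}(y)}\, p_{\sigma^{-1}(x)}$ on the other. Because $\sigma^{-1}$ is a bijection, $\sigma^{-1}(x) = \sigma^{-1}(y)$ if and only if $x = y$, so the two Kronecker deltas agree. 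Multiplicativity on basis elements extends by linearity to all of $(kG)^*$, and bijectivity is automatic, since applying the same construction to $\sigma^{-1}$ produces a two-sided inverse.

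There is no substantive obstacle; the proof is a short direct verification. The only subtlety worth flagging is the appearance of $\sigma^{-1}$ on the right side of $\sigma(p_x) = p_{\sigma^{-1}(x)}$, which reflects the contravariant nature of dualizing a bijection at the level of coordinate functions: pulling back by $\sigma$ on inputs corresponds to pushing forward by $\sigma^{-1}$ on the index labelling the basis.
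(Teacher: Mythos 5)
Your proof is correct and follows essentially the same route as the paper: evaluate $\sigma(p_x)$ pointwise to get $\sigma(p_x)=p_{\sigma^{-1}(x)}$, then verify multiplicativity via $p_x\cdot p_y=\delta_{x,y}p_x$ and the injectivity of $\sigma^{-1}$. The extra remarks on linear extension and invertibility are fine but not needed beyond what the paper records.
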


We now further extend $\sigma$ to $D(G)$ in the following way:
\begin{df} Let $\gamma_\sigma: D(G) \rightarrow D(G)$ be defined as: $$\gamma_\sigma(p_x \bt g) = \sigma(p_x) \bt \sigma^{-1}(g) = p_{\sigma^{-1}(x)} \bt \sigma^{-1}(g).$$\normalsize \end{df}

\begin{df} An \it automorphism \rm of a Hopf algebra $H$ is a map $\gamma: H \rightarrow H$ that preserves the multiplication, unit, comultiplication, counit,  and antipode of $H$.  That is for all $g,h$ in $H$, $\gamma(g \cdot h) = \gamma(g) \cdot \gamma(h)$, $(\gamma \otimes \gamma)(\Delta(h)) = \Delta(\gamma(h))$, and $\gamma(S(h)) = S(\gamma(h))$. \end{df}

\begin{prop} $\gamma_\sigma$ is a Hopf automorphism of $D(G)$.
\begin{proof} 1) Using the definition of multiplicaiton in $D(G)$ given in Definition \ref{double}, we see that 
 \begin{align*}\gamma_\sigma\left( \ (p_x \bowtie g) \cdot ( p_y \bowtie h) \ \right) & = \gamma_\sigma\left( \delta_{x,gyg^{-1}}\ p_x \bowtie gh \right) \\ & =\delta_{x,gyg^{-1}}\ p_{\sigma^{-1}(x)} \bt \sigma^{-1}(gh). \end{align*}\normalsize  
Since $x = gyg^{-1}$ if and only if $\sigma^{-1}(x) = \sigma^{-1}(gyg^{-1}) = \sigma^{-1}(g)\sigma^{-1}(y)\sigma^{-1}(g^{-1})$ 
\newline$ = \sigma^{-1}(g)\sigma^{-1}(y)(\sigma^{-1}(g))^{-1}$, this means $\delta_{x,gyg^{-1}} = \delta_{\sigma^{-1}(x),\sigma^{-1}(g)\sigma^{-1}(y)(\sigma^{-1}(g))^{-1}}.$\normalsize \newline  Thus
\begin{align*}\gamma_\sigma\left( (p_x \bowtie g) \cdot ( p_y \bowtie h) \right) & =  \gamma_\sigma\left( \delta_{x,gyg^{-1}}\ p_x \bowtie gh \right) \\
& =\delta_{x,gyg^{-1}}\ p_{\sigma^{-1}(x)} \bt \sigma^{-1}(gh) \\
& = \delta_{\sigma^{-1}(x),\sigma^{-1}(g)\sigma^{-1}(y)(\sigma^{-1}(g))^{-1}} \ p_{\sigma^{-1}(x)} \bt \sigma^{-1}(g)\sigma^{-1}(h) \\
& = \left(p_{\sigma^{-1}(x)} \bt \sigma^{-1}(g) \right) \cdot \left( p_{\sigma^{-1}(y)} \bt \sigma^{-1}(h) \right)\\
& = \gamma_\sigma(p_x \bt g) \cdot \gamma_\sigma(p_y \bt h). \end{align*}\normalsize
Thus $\gamma_\sigma$ preserves multiplication.  Clearly, $\gamma_\sigma$ preserves the unit.

2) Using the definition of comultiplicaiton in $D(G)$ given in Definition \ref{double}, we see that
\begin{align*} \gamma_\sigma(\Delta(p_g \bt x)) & = (\gamma_\sigma \otimes \gamma_\sigma)(\sum_{h \in G} (p_h \bt x) \ot (p_{h^{-1}g} \bt x)) \\
& = \sum_{h \in G} \gamma_\sigma(p_h \bt x) \ot \gamma_\sigma(p_{h^{-1}g} \bt x) \\
& = \sum_{h \in G} (p_{\sigma^{-1}(h)} \bt \sigma^{-1}(x) ) \ot (p_{\sigma^{-1}(h^{-1}g)} \bt \sigma^{-1}(x)) \\
& = \sum_{h \in G} (p_{\sigma^{-1}(h)} \bt \sigma^{-1}(x) ) \ot (p_{(\sigma^{-1}(h))^{-1}\sigma^{-1}(g)} \bt \sigma^{-1}(x)) \\
& = \sum_{k \in G} (p_k \bt \sigma^{-1}(x)) \ot (p_{k^{-1}\sigma^{-1}(g)} \bt \sigma^{-1}(x) )\\
& = \Delta( p_{\sigma^{-1}(g)} \bt \sigma^{-1}(x) )\\
& = \Delta(\gamma_\sigma(p_g \bt x)),
\end{align*}\normalsize  where in the fifth equality $k = \sigma^{-1}(h)$.

Thus $\gamma_\sigma$ preserves comultiplication.  Clearly, $\gamma_\sigma$ preserves the counit.

3) Finally, using the definition of the antipode of $D(G)$ given in Definition \ref{double}, we see that 
 \begin{align*}\gamma_\sigma\left(S(p_x \bt g)\right) & = \gamma_\sigma(p_{g^{-1}x^{-1}g} \bt g^{-1}) \\
& = p_{\sigma^{-1}(g^{-1}x^{-1}g)} \bt \sigma^{-1}(g^{-1})\\
& = p_{\sigma^{-1}(g^{-1})\sigma^{-1}(x^{-1})\sigma^{-1}(g)} \bt \sigma^{-1}(g^{-1})\\
& = p_{(\sigma^{-1}(g))^{-1}(\sigma^{-1}(x))^{-1}\sigma^{-1}(g)} \bt (\sigma^{-1}(g))^{-1} \\
& = S(p_{\sigma^{-1}(x)} \bt \sigma^{-1}(g)) \\
& = S(\gamma_\sigma(p_x \bt g)). \end{align*}\normalsize  
Thus $\gamma_\sigma$ preserves the anitpode, and $\gamma_\sigma$ is a Hopf automorphism.
\end{proof}\end{prop}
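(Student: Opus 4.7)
The plan is to verify the five structural conditions (compatibility with multiplication, unit, comultiplication, counit, and antipode) one at a time, using the explicit formulas for $D(G)$ recorded in Definition \ref{double}, together with the fact that $\sigma^{-1}$ is itself a group automorphism of $G$. Bijectivity of $\gamma_\sigma$ is automatic, since $\gamma_{\sigma^{-1}}$ is a two-sided inverse: $\gamma_\sigma \circ \gamma_{\sigma^{-1}}(p_x \bt g) = \gamma_\sigma(p_{\sigma(x)} \bt \sigma(g)) = p_x \bt g$, and similarly on the other side. So the real content is checking the three structure-map identities; the unit is sent to the unit since $\sigma^{-1}$ fixes $1 \in G$, and the counit check reduces to $\delta_{\sigma^{-1}(x),1} = \delta_{x,1}$, which is immediate.

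For multiplication I would expand $\gamma_\sigma((p_x \bt g)(p_y \bt h)) = \gamma_\sigma(\delta_{x,gyg^{-1}}\,p_x \bt gh)$, pull the Kronecker delta outside $\gamma_\sigma$, and use that $x = gyg^{-1}$ if and only if $\sigma^{-1}(x) = \sigma^{-1}(g)\sigma^{-1}(y)\sigma^{-1}(g)^{-1}$ (because $\sigma^{-1}$ is a group homomorphism), so the resulting delta matches the one coming from $(p_{\sigma^{-1}(x)} \bt \sigma^{-1}(g))(p_{\sigma^{-1}(y)} \bt \sigma^{-1}(h))$. The antipode check is analogous: apply $\gamma_\sigma$ to $S(p_x \bt g) = p_{g^{-1}x^{-1}g} \bt g^{-1}$, distribute $\sigma^{-1}$ over the products and inverses inside the index, and recognize the result as $S(p_{\sigma^{-1}(x)} \bt \sigma^{-1}(g))$.

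The comultiplication check is the one I expect to demand the most care, because it is the only identity involving a sum indexed by $G$. After applying $\gamma_\sigma \otimes \gamma_\sigma$ to $\Delta(p_g \bt x) = \sum_{h \in G}(p_h \bt x) \otimes (p_{h^{-1}g} \bt x)$, I would rewrite the summand as $(p_{\sigma^{-1}(h)} \bt \sigma^{-1}(x)) \otimes (p_{\sigma^{-1}(h)^{-1}\sigma^{-1}(g)} \bt \sigma^{-1}(x))$ using the homomorphism property on the second index, and then perform the change of variables $k = \sigma^{-1}(h)$; bijectivity of $\sigma^{-1}$ ensures $k$ ranges over all of $G$, so the sum is exactly $\Delta(p_{\sigma^{-1}(g)} \bt \sigma^{-1}(x)) = \Delta(\gamma_\sigma(p_g \bt x))$.

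The main obstacle, if any, is bookkeeping rather than conceptual: one has to be consistent about which side of the smash product $\sigma$ versus $\sigma^{-1}$ is acting on, since the definition applies $\sigma$ to the $p_x$ factor (which the second lemma identifies with $p_{\sigma^{-1}(x)}$) and $\sigma^{-1}$ to the group factor. Keeping this uniform across the three computations, together with the index substitution $k = \sigma^{-1}(h)$ in the coproduct check, is the only place where a careless step could go wrong.
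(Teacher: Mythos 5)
Your proposal is correct and follows essentially the same route as the paper's proof: expand each structure map on basis elements, pull the Kronecker delta through $\gamma_\sigma$ using the identity $\delta_{x,gyg^{-1}} = \delta_{\sigma^{-1}(x),\,\sigma^{-1}(g)\sigma^{-1}(y)\sigma^{-1}(g)^{-1}}$ for multiplication, distribute $\sigma^{-1}$ over the index for the antipode, and reindex the coproduct sum by $k=\sigma^{-1}(h)$. The only (harmless) addition is your explicit verification of bijectivity via $\gamma_{\sigma^{-1}}$, which the paper leaves implicit.
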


\begin{lem}\label{joe}  Let $\chi$ be an irreducible character of $G$ and $\sigma \in Aut(G)$.  Define the map $\chi^\sigma$ to be $\chi^\sigma(g) = \chi(\sigma(g))$ for $g \in G$.  Then $\chi^\sigma$ is also an irreducible character of $G$.
\end{lem}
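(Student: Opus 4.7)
The plan is to realize $\chi^\sigma$ as the character of an honest representation of $G$, and then invoke the standard inner-product criterion for irreducibility.

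First I would observe that if $\rho: G \to GL_n(\mathbb{C})$ is an irreducible representation with character $\chi$, then the composition $\rho \circ \sigma : G \to GL_n(\mathbb{C})$ is again a group homomorphism, since both $\rho$ and $\sigma$ are. Hence $\rho \circ \sigma$ is a representation of $G$ of the same degree as $\rho$, and its character at $g \in G$ is
\[
\operatorname{tr}\bigl((\rho \circ \sigma)(g)\bigr) = \operatorname{tr}\bigl(\rho(\sigma(g))\bigr) = \chi(\sigma(g)) = \chi^\sigma(g).
\]
So $\chi^\sigma$ is in any case a genuine character of $G$; the remaining task is to check it is irreducible.

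For irreducibility I would use the Schur orthogonality relation: a character $\psi$ of $G$ is irreducible if and only if $\langle \psi, \psi \rangle = 1$, where the inner product is $\frac{1}{|G|}\sum_{g \in G} \psi(g)\overline{\psi(g)}$. Applying this to $\chi^\sigma$,
\[
\langle \chi^\sigma, \chi^\sigma \rangle = \frac{1}{|G|} \sum_{g \in G} \chi(\sigma(g))\, \overline{\chi(\sigma(g))}.
\]
Because $\sigma$ is an automorphism of $G$, the map $g \mapsto \sigma(g)$ is a bijection of $G$ onto itself. Reindexing by $h = \sigma(g)$ gives
\[
\langle \chi^\sigma, \chi^\sigma \rangle = \frac{1}{|G|} \sum_{h \in G} \chi(h)\,\overline{\chi(h)} = \langle \chi, \chi \rangle = 1,
\]
the last equality using the assumed irreducibility of $\chi$. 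Hence $\chi^\sigma$ is irreducible.

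There is no serious obstacle here: the content is just that an automorphism of $G$ pulls representations back to representations, and being a bijection, it preserves sums over $G$ and hence the $L^2$-norm that detects irreducibility. If one preferred to avoid the inner-product formula, an equally short alternative would be to note that any $G$-invariant subspace $W \subseteq \mathbb{C}^n$ for the twisted representation $\rho \circ \sigma$ is automatically $G$-invariant for $\rho$ (since $\sigma$ is surjective), so irreducibility is transported directly from $\rho$ to $\rho \circ \sigma$.
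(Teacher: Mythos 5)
Your proof is correct: composing an irreducible representation with the automorphism $\sigma$ yields a representation whose character is $\chi^\sigma$, and either the reindexed inner-product computation or the invariant-subspace argument establishes irreducibility. The paper itself gives no proof of this lemma --- it only remarks that the fact is ``well known in group theory'' --- so your argument simply supplies the standard details the paper takes for granted.
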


This fact is well known in group theory.  $\chi^\sigma$ is called quasi-equivalent to $\chi$.

Lemma \ref{joe} also extends to automorphism on Hopf algebras.

\begin{cor}  Let $\chi$ be an irreducible character of a Hopf algebra $H$ and $\gamma \in Aut(H)$.  Define the map $\chi^\gamma$ to be $\chi^\gamma(h) = \chi(\gamma(h))$ for $h \in H$.  Then $\chi^\gamma$ is also an irreducible character of $H$.
\end{cor}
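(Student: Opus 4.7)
The plan is to realize $\chi$ as the character of a simple module, twist the module structure by $\gamma$, and verify that the twisted module is again simple with character $\chi^\gamma$. I would first fix a simple left $H$-module $V$ with representation $\rho : H \to \mathrm{End}(V)$ such that $\chi(h) = \mathrm{trace}(\rho(h))$, and then set $\rho^\gamma := \rho \circ \gamma$.

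The first step will be to check that $\rho^\gamma$ is itself a representation. Because $\gamma$ is an algebra homomorphism (the only piece of the Hopf-automorphism structure that the argument needs) and $\rho$ is an algebra homomorphism, the composite $\rho^\gamma$ is again an algebra homomorphism and so makes $V$ into an $H$-module. Taking traces then gives
\[
\mathrm{trace}(\rho^\gamma(h)) \;=\; \mathrm{trace}(\rho(\gamma(h))) \;=\; \chi(\gamma(h)) \;=\; \chi^\gamma(h),
\]
which identifies $\chi^\gamma$ as the character of $\rho^\gamma$.

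The second step will be irreducibility. A subspace $W \subseteq V$ is $\rho^\gamma$-invariant iff $\rho(\gamma(h)) W \subseteq W$ for every $h \in H$; but since $\gamma$ bijects $H$ with itself, $\{\gamma(h) : h \in H\} = H$, so the condition is equivalent to $W$ being $\rho$-invariant. Hence the $H$-submodule lattices of $V$ under $\rho$ and $\rho^\gamma$ coincide, and simplicity under $\rho$ transfers to simplicity under $\rho^\gamma$, so that $\chi^\gamma$ is irreducible. I do not anticipate any real obstacle here: the argument is a direct transcription of Lemma \ref{joe} from groups to Hopf algebras and uses only that $\gamma$ is a bijective algebra homomorphism — the comultiplication, counit, and antipode play no role in the proof.
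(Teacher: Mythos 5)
Your proof is correct and is exactly the standard argument the paper implicitly relies on when it asserts that Lemma \ref{joe} ``extends'' to Hopf algebras (the paper gives no written proof of this corollary). Twisting the module structure by $\gamma$ and observing that only the bijective-algebra-homomorphism part of the Hopf automorphism is needed — so that the submodule lattices of $\rho$ and $\rho\circ\gamma$ coincide — is precisely the intended reasoning, and your trace computation correctly identifies the resulting character as $\chi^\gamma$.
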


\begin{lem}\label{why mix1}  Let $\chi$ be an irreducible character of $D(G)$ induced from an irreducible character $\eta$ of the centralizer $C_G(u)$, where $u \in G$.  Let $\sigma \in Aut(G)$ and $\gamma_\sigma \in Aut(D(G))$.  Then $\chi^{\gamma_\sigma}$ is an irreducible character of $D(G)$ which is quasi-equivalent to one induced from an irreducible character of the centralizer $C_G(\sigma(u))$ .
\begin{proof}
We see that $\sigma(C_G(g)) = C_G(\sigma(g))$ since if $g, x,\in G$, then $ x \in \sigma(C_G(g)) \Leftrightarrow \sigma^{-1}(x) \in C_G(g) \Leftrightarrow \sigma^{-1}(x)g = g\sigma^{-1}(x) \Leftrightarrow x\sigma(g) = \sigma(g)x \Leftrightarrow x \in C_G(\sigma(g))$.  Thus we have equality.

Since $\sigma$ is a group automorphism, $C_G(g) \cong \sigma(C_G(g)) = C_G(\sigma(g))$ and thus any irreducible character of $C_G(g)$ is quasi-equivalent to an irreducible character of $C_G(\sigma(g))$.  That is, if $\eta$ is an irreducible character of $C_G(g)$, then $\eta^\sigma$ is an irreducible character of $C_G(\sigma(g))$.   Thus, if $\chi$ is an irreducible character $D(G)$ induced from $\eta$, then $\chi^{\gamma_\sigma}$ is an irreducible character of $D(G)$ induced from $\eta^\sigma$.

\end{proof}
\end{lem}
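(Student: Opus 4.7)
The plan is to argue in three stages, using the Hopf automorphism property of $\gamma_\sigma$ together with Lemma \ref{repsofDG}.

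First, I would establish the group-theoretic equality $\sigma(C_G(u)) = C_G(\sigma(u))$ by a direct element chase: $x \in C_G(u)$ if and only if $xu = ux$, and applying the automorphism $\sigma$, this holds if and only if $\sigma(x)\sigma(u) = \sigma(u)\sigma(x)$, i.e.\ $\sigma(x) \in C_G(\sigma(u))$. Thus $\sigma$ restricts to an isomorphism $C_G(u) \cong C_G(\sigma(u))$, and it transports the irreducible character $\eta$ to an irreducible character $\eta^\sigma$ of $C_G(\sigma(u))$ defined by $\eta^\sigma(x) = \eta(\sigma^{-1}(x))$; irreducibility is immediate from Lemma \ref{joe}.

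Second, the Corollary preceding this lemma already tells us $\chi^{\gamma_\sigma}$ is an irreducible character of $D(G)$, and by Lemma \ref{repsofDG} every such character is induced from an irreducible character of some centralizer. To identify which centralizer, I would compare explicit formulas: on $\hat{V} = \BC G \otimes_{\BC C_G(u)} V$, the element $(p_g \bowtie x)$ acts on $[y \otimes v]$ as $\delta_{xyuy^{-1},g}\, xy \otimes v$. The twist $\chi^{\gamma_\sigma}(p_g \bowtie x) = \chi(p_{\sigma^{-1}(g)} \bowtie \sigma^{-1}(x))$ can then be matched, after the substitution $y \mapsto \sigma(y)$ in the coset sum computing the trace, with the trace formula for $\BC G \otimes_{\BC C_G(\sigma(u))} V^\sigma$, where $V^\sigma$ denotes the $\BC C_G(\sigma(u))$-module carrying the character $\eta^\sigma$.

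The main obstacle is justifying \emph{quasi-equivalent} rather than equal. The induction construction in Lemma \ref{repsofDG} is stated relative to a distinguished representative of each conjugacy class of $G$, and $\sigma(u)$ need not coincide with the chosen representative $u'$ of its class. Passing from $\sigma(u)$ to $u'$ via an inner automorphism of $G$ identifies $C_G(\sigma(u))$ with $C_G(u')$ and carries $\eta^\sigma$ to a character in its quasi-equivalence class, which is precisely why the conclusion is stated only up to quasi-equivalence. This final bookkeeping step — absorbing the inner-conjugation ambiguity into the word \emph{quasi-equivalent} — is the only delicate point; the rest is routine diagram chasing using the definitions of $\gamma_\sigma$ and of the induced module.
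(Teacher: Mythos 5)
Your proposal follows essentially the same route as the paper: the element chase giving $\sigma(C_G(u)) = C_G(\sigma(u))$, the transport of $\eta$ to $\eta^\sigma$ on $C_G(\sigma(u))$, and the identification of $\chi^{\gamma_\sigma}$ with the character induced from $\eta^\sigma$. The only difference is that you propose to actually verify the last identification on the induced module and to explain why the conclusion is stated only up to quasi-equivalence (the choice of conjugacy class representative), two points the paper's proof asserts without detail; both additions are correct and welcome.
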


\begin{ex} \rm Consider when $G=S_n$.  Let $\sigma \in Aut(S_n)$ and $\chi$ be an irreducible character of $D(S_n)$.  If $n \neq 6$, then $\sigma$ must be an inner automorphism, meaning is it defined by conjugation.  Since conjugation preserves the conjugacy classes, we see that $\chi^\sigma = \chi$ for $n \neq 6$.
\end{ex}

This however is not true for $G=S_6$.  Below we explicitly provide the outer automorphism of $S_6$ from [R Corollary 7.13].
\begin{df}\label{outer auto}\rm Define $\sigma \in Aut(S_6)$ by 
$$\begin{array}{c} (1\ 2) \mapsto (1\ 5)(2\ 3)(4\ 6), \\ 
			(1\ 3) \mapsto (1\ 4)(2\ 6)(3\ 5), \\ 
			(1\ 4) \mapsto (1\ 3)(2\ 4)(5\ 6), \\ 
			(1\ 5) \mapsto (1\ 2)(3\ 6)(4\ 5), \\ 
			(1\ 6) \mapsto (1\ 6)(2\ 5)(3\ 4).
\end{array}$$ \end{df}
A routine but long calculation show that $\sigma^2=1$.  From this definition we can compute that
\begin{align*} (1\ 2)(3\ 4)(5\ 6) &\mapsto (2\ 3), \\ 
			(1\ 2)(3\ 4) & \mapsto (1\ 4)(5\ 6),\\
			(1\ 2\ 3) &\mapsto (1\ 3\ 6)(2\ 5\ 4), \\ 
			(1\ 2\ 3)(4\ 5\ 6) &\mapsto (1\ 6\ 3), \\ 
			(1\ 2\ 3)(4\ 5) &\mapsto (1\ 4\ 6\ 5\ 3\ 2), \\ 
			(1\ 2\ 3\ 4\ 5\ 6) &\mapsto (1\ 5\ 6)(2\ 4),\\
			(1\ 2\ 3\ 4) & \mapsto (2\ 6\ 3\ 5),\\
			(1\ 2\ 3\ 4)(5\ 6) &\mapsto (1\ 4)(2\ 5\ 3\ 6),\\
			(1\ 2\ 3\ 4\ 5) &\mapsto (1\ 2\ 3\ 4\ 5).\\
\end{align*}
So we see that the product of three transpositions maps to a single transposition and vice-a-versa, 3-cycles map to a product of two 3-cycles and vice-a-versa, 6-cycles map to a product of a transposition and a 3-cycle and vice-a-versa, and all other cycle types map to their own type.  We will use this fact later to prove Proposition \ref{homo prop}.

We end this section with a final Proposition, which proves the higher indicators of two quasi-equivalent characters are equal, and a question.
\begin{prop}\label{why mix}  Let $\chi$ be an irreducible character of $D(G)$ and let $\sigma \in Aut(D(G))$.  Then $\nu_m (\chi^\sigma) = \nu_m (\chi)$ for all $m \in \BN$.
\begin{proof}
Recall that $\Lambda$ given in Equation \ref{lambda} is the unique integral of $D(G)$, and so $\Lambda^\sigma  = \sigma(\Lambda) = \Lambda$.  Since $\sigma$ also preserves the coalgebra structure of $D(S_n)$, $(\Lambda^{[n]})^\sigma = (\Lambda^\sigma)^{[n]} = \Lambda^{[n]}$.
Thus when we look at the original definition of the higher indicators as given in Definition \ref{def of FS ind} we see that 
$\nu_m (\chi^\sigma) = \chi^\sigma(\Lambda^{[m]}) = \chi (\sigma(\Lambda^{[m]})  = \chi (\Lambda^{[m]}) = \nu_m (\chi)$.
\end{proof}
\end{prop}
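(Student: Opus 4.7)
The plan is to reduce everything to the observation that the normalized integral $\Lambda$ of $D(G)$ is fixed by every Hopf automorphism, and then push this invariance through the definition of $\nu_m$ given in Definition \ref{def of FS ind}.

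First I would unpack the definition: by definition $\nu_m(\chi^\sigma) = \chi^\sigma(\Lambda^{[m]}) = \chi(\sigma(\Lambda^{[m]}))$, so the whole proposition reduces to showing $\sigma(\Lambda^{[m]}) = \Lambda^{[m]}$ for each $m \in \BN$. Next I would show that $\sigma(\Lambda) = \Lambda$. Because $\sigma$ is an algebra automorphism and $\Lambda$ is an integral, $h\sigma(\Lambda) = \sigma(\sigma^{-1}(h)\Lambda) = \sigma(\epsilon(\sigma^{-1}(h))\Lambda) = \epsilon(h)\sigma(\Lambda)$ (using that $\sigma$ also preserves $\epsilon$), and similarly on the right, so $\sigma(\Lambda)$ is again an integral. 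Since $\epsilon(\sigma(\Lambda)) = \epsilon(\Lambda) = 1$, uniqueness of the normalized integral forces $\sigma(\Lambda) = \Lambda$.

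Then I would propagate this through the iterated coproduct and multiplication that define $\Lambda^{[m]}$. Because $\sigma$ commutes with $\Delta$ (as a coalgebra map), one has $\Delta^{n}(\sigma(h)) = (\sigma \otimes \cdots \otimes \sigma) \Delta^n(h)$ by an easy induction on $n$. Combined with $\sigma$ being an algebra map, this gives
\begin{equation*}
\sigma(h^{[m]}) = \sigma\!\left(\sum h_1 \cdots h_m\right) = \sum \sigma(h_1)\cdots\sigma(h_m) = (\sigma(h))^{[m]}
\end{equation*}
for every $h \in D(G)$. Applying this with $h = \Lambda$ and using $\sigma(\Lambda) = \Lambda$ yields $\sigma(\Lambda^{[m]}) = (\sigma(\Lambda))^{[m]} = \Lambda^{[m]}$.

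Plugging this back in gives $\nu_m(\chi^\sigma) = \chi(\sigma(\Lambda^{[m]})) = \chi(\Lambda^{[m]}) = \nu_m(\chi)$, as required. The only potentially delicate step is the uniqueness argument for the integral; everything else is a formal consequence of $\sigma$ being a Hopf (i.e., both algebra and coalgebra) map. Notice that the argument never uses the explicit formula for $\Lambda$ in Equation \ref{lambda} nor any special feature of $D(G)$, so the same proof works for any finite-dimensional semisimple Hopf algebra.
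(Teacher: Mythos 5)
Your proposal is correct and follows the same route as the paper: show that the normalized integral is fixed by any Hopf automorphism (the paper asserts this directly from uniqueness, while you spell out why $\sigma(\Lambda)$ is again a normalized integral), deduce $\sigma(\Lambda^{[m]}) = (\sigma(\Lambda))^{[m]} = \Lambda^{[m]}$ from $\sigma$ being a bialgebra map, and conclude via $\nu_m(\chi^\sigma) = \chi(\sigma(\Lambda^{[m]}))$. Your version simply fills in the details the paper leaves implicit, and your closing remark that the argument works for any finite-dimensional semisimple Hopf algebra is accurate.
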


\begin{qu}\label{all autos?}  Does every Hopf automorphism of $D(S_n)$ for $n \geq 5$ come from an automorphism $\sigma \in Aut(G)$ as in Lemma \ref{why mix1}?
\end{qu}


\section{Indicator Equivalences} 

In this section we explore when higher Frobenius-Schur indicators are equivalent.  We first discuss what formulas we used to compute all the indicators of $D(S_n)$ in Section 3.1.  Then in Section 3.2, since all of the indicators we calculated were integers, we discuss results showing we need only consider a smaller subset of all possible higher indicators $\nu_m$ of a character to know them all.   In Section 3.3 we discuss when all higher indicators of different characters are correspondingly equivalent, and then focus on when an indicator is zero valued in Section 3.4.  We finish this section by trying to answer a question that naturally arises from a proposition in Section 3.4.

\subsection{}
\bf A More Computable Formula for $\nu_m$ \rm   

We first computed indicators in GAP using the formula in Corollary \ref{old indicator formula}.  However, these computations were very time consuming, and we were only able to compute the indicators for $D(S_3)$ through $D(S_6)$ using this method.  Attempting to compute the indicators for $D(S_7)$ overloaded our computer.  We needed a more efficient formula or way of programming in order to compute the indicators for the double of larger symmetric groups.

Recently a different formula was given in [IMM] for calculating the higher indicators of other Hopf algebras.  We will use a variation of this formula which gives a more efficient computation. 

Recall from Definion \ref{Gmug} that $$G_m(g,y) = \ds \left \{ x\in G\ : \ \ds \prod_{j=0}^{m-1} x^{-j} g x^j = 1 \text{ and } x^m = y \right \}.$$\normalsize

We fix $g = u$ in a conjugacy class.  Then as noted in [IMM],
$$G_m(u,y) = \{h\in G \ | \ h^m = y, \ (uh)^m = h^m\}.$$\normalsize
  That is $ (uh)^m = h^m$ if and only if $\ds\prod_{j=0}^{m-1} h^{-j}uh^j = 1$.

Now let $\eta$ be a character for an irreducible representation of $C_G(u)$, where as before $C_G(u)$ is the centralizer of our element $u$.  When calculating $\nu_m(\chi)$ as in Corollary \ref{old indicator formula}, [IMM] further noted that the first condition of $G_m(u,y)$ is superfluous.  That is, $(uh)^m = h^m$ implies $h^m \in C_G(u)$.
Thus instead of summing over the centralizer, we can sum over a new set.
\begin{df}\label{Gmu} \rm For a group $G$, define $\tilde{G}_m(u) := \{h \in G\ |\ (uh)^m = h^m\}$\normalsize.   \end{df}

\begin{cor}\label{ind formula}\rm [IMM] \it For a group $G$, a fixed element $u$ in a conjugacy class, and an irreducible character $\eta$ of $C_G(u)$, the formula for the $m^\text{th}$ Frobenius-Schur indicator of a character $\chi$ of $D(G)$ induced from $\eta$ as given in Corollary \ref{old indicator formula} is equivalent to 
$$\nu_m(\chi) = \frac{1}{|C_G(u)|} \ \sum_{h \in \tilde{G}_m(u)} \eta(h^m) .$$\normalsize
\end{cor}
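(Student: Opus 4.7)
The plan is to start from the formula recorded in Corollary~\ref{old indicator formula},
$$\nu_m(\chi) = \frac{1}{|C_G(u)|}\sum_{y \in C_G(u)} z_m(u,y)\,\eta(y),$$
and rewrite it as a single sum over an appropriate subset of $G$. The first step would be to unpack the weight $z_m(u,y) = |G_m(u,y)|$ as a cardinality, turning the expression into the double sum
$$\nu_m(\chi) = \frac{1}{|C_G(u)|}\sum_{y \in C_G(u)}\sum_{h \in G_m(u,y)} \eta(y),$$
and then, using that $h^m = y$ on $G_m(u,y)$, replace $\eta(y)$ by $\eta(h^m)$. Next I would invoke the reformulation $G_m(u,y) = \{h \in G : h^m = y,\ (uh)^m = h^m\}$ noted just before the statement, which collapses the double sum into a single sum:
$$\nu_m(\chi) = \frac{1}{|C_G(u)|}\sum_{\substack{h \in G \\ (uh)^m = h^m \\ h^m \in C_G(u)}} \eta(h^m).$$

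The main content of the corollary is then the claim that the condition $h^m \in C_G(u)$ is already implied by $(uh)^m = h^m$, so it may be dropped. I would verify this by computing $(uh)^{m+1}$ in two ways under the assumption $(uh)^m = h^m$:
$$(uh)^{m+1} = (uh)^m \cdot uh = h^m u h, \qquad (uh)^{m+1} = uh\cdot (uh)^m = u\, h^{m+1}.$$
Equating the two expressions and right-cancelling a single $h$ yields $h^m u = u h^m$, so $h^m \in C_G(u)$. In particular, the value $\eta(h^m)$ is defined for every $h$ in the summation, and the constraint $h^m \in C_G(u)$ is redundant. After removing it, the summation set becomes exactly $\tilde G_m(u) = \{h \in G : (uh)^m = h^m\}$ of Definition~\ref{Gmu}, giving the claimed formula.

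I expect the only genuinely nontrivial step to be this centralization lemma $(uh)^m = h^m \Rightarrow h^m \in C_G(u)$; everything else is a matter of unwinding definitions and interchanging sums. Since the lemma is handled cleanly by the two-expression computation of $(uh)^{m+1}$ above, the full argument should be short, and the efficiency gain advertised in the discussion comes entirely from the fact that after the collapse we sum over $\tilde G_m(u)$ (which only involves testing $(uh)^m = h^m$) rather than iterating over pairs $(y,h)$ as in the original formula.
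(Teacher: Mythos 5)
Your proposal is correct and follows essentially the same route the paper takes: the paper also passes from Corollary \ref{old indicator formula} to the single sum over $\tilde{G}_m(u)$ by unpacking $z_m(u,y)=|G_m(u,y)|$, using the reformulation $G_m(u,y)=\{h : h^m=y,\ (uh)^m=h^m\}$, and dropping the condition $h^m\in C_G(u)$ as redundant, though it merely cites [IMM] for these facts rather than proving them. Your two-sided computation of $(uh)^{m+1}$ giving $h^m u = u h^m$ is a correct and clean verification of the centralization step that the paper leaves as a citation.
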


In order to make the formula in Corollary \ref{ind formula} more efficient when using GAP, we consider the elements $h^m$.  There may be many different $h \in \tilde{G}_m(u)$ that have the same $h^m$, so rather than summing over $h$, we could sum over $h^m$.  But even distinct $h^m$ may be in the same conjugacy class of $C_G(u)$, which means the value of $\eta(h^m)$ will be the same, so we could sum over the conjugacy classes of $C_{G}(u)$.  Thus to avoid summing the same value more than once we give the following definitons and notation.

Let $Conj_{C_G(u)}$ be the set of conjugacy classes of $C_{G}(u)$, and $\mathfrak{R}_{C_G(u)}$ be a fixed set of conjugacy class representatives - that is, a set of elements in $C_{G}(u)$ such that each element is from a different conjugacy class.

For example, consider $C_{S_4}((1 2)(3 4) ) = \langle (1 2), (1 3)(2 4), (3 4) \rangle \cong D_8$.  Then \newline
$Conj_{C_{S_4}( (1 2)(3 4) )} = \{\ [ () ],\ [  (1 2), (3 4) ],\ [ (1 2)(3 4) ],\ [ (1 3)(2 4), (1 4)(2 3) ],$ 

\hfill $ [ (1 3 2 4), (1 4 2 3) ]\ \}$

\noindent and we may choose $\mathfrak{R}_{C_{S_4}( (1 2)(3 4) )} = \{\ () ,\ (1 2),\ (1 2)(3 4) ,\ (1 3)(2 4),\ (1 3 2 4)\ \}$.

\begin{df}\label{Gmm} \rm Let $\mathfrak{R}_{C_G(u)}$ be a fixed set of conjugacy class representatives of $C_G(u)$.  Define $\tilde{G}_m^m(u)$ to be the set of all $y \in \mathfrak{R}_{C_G(u)}$ for which there exists an $h \in \tilde{G}_m(u)$ such that $h^m$ is in the conjugacy class of $y$ in $C_G(u)$.

To construct the set $\tilde{G}_m^m(u)$ from the set $\tilde{G}_m(u)$ we would: \newline
(1) raise all elements $h \in \tilde{G}_m(u)$ to the $m^{\text{th}}$ power; \newline
(2) organize these new elements into their conjugacy classes; and \newline
(3) choose one representative from each of these conjugacy classes. 

The set $\tilde{G}_m^m(u)$ is the collection of all these conjugacy class representatives.  Another description of $\tilde{G}_m^m(u)$ will be given in Lemma \ref{3.1.5} below.
\end{df}

\begin{df}\label{gamma}\rm Let $y \in C_{G}(u)$.  Define $\Gamma_m(u,y)$ to be the number of $h \in G$ such that $h\in \tilde{G}_m(u)$ and $h^m$ is in the conjugacy class of $y$ in $C_G(u)$, that is: 
$$\Gamma_m(u, y):=\ |\ \{ h \in G \ |\  (uh)^m = h^m \text{ and } h^m \in cl_{C_{G}(u)}(y)\}\ |,$$\normalsize  where $cl_{C_G(u)}(y)$ denotes the conjugacy class of $y$ in $C_G(u)$.
\end{df}

Note that for $y_1$ and $y_2$ in the same conjugacy class of $C_G(u)$, $\Gamma_m(u, y_1) = \Gamma_m(u, y_2)$.

\begin{lem}\label{3.1.5}  Using $\Gamma_m(u,y)$ we see:  $$\tilde{G}_m^m(u) = \{ y \in \mathfrak{R}_{C_G(u)} \ |\ \Gamma_m(u, y) \neq 0\}.$$\normalsize
\end{lem}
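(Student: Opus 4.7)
The plan is to verify the claimed set equality by unwinding both descriptions to the same existential statement about $y$. Both the left-hand and right-hand sides are subsets of the fixed transversal $\mathfrak{R}_{C_G(u)}$, so it suffices to show that, for each such $y$, the condition for membership in $\tilde{G}_m^m(u)$ and the condition $\Gamma_m(u,y) \neq 0$ are logically equivalent.

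First I would record the preliminary observation already noted in the paper just before Definition \ref{Gmu}: the relation $(uh)^m = h^m$ forces $h^m \in C_G(u)$, so $cl_{C_G(u)}(h^m)$ is well-defined whenever $h \in \tilde{G}_m(u)$. This makes the conjugacy-class comparison in both definitions meaningful.

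Next I would do the two inclusions. For the forward direction, assume $y \in \tilde{G}_m^m(u)$. By Definition \ref{Gmm}, some $h \in \tilde{G}_m(u)$ satisfies $h^m \in cl_{C_G(u)}(y)$; that very $h$ is then an element of the set counted by $\Gamma_m(u,y)$ in Definition \ref{gamma}, so $\Gamma_m(u,y) \geq 1$. For the reverse direction, assume $y \in \mathfrak{R}_{C_G(u)}$ with $\Gamma_m(u,y) \neq 0$; then the set counted by $\Gamma_m(u,y)$ is nonempty, producing an $h \in G$ with $(uh)^m = h^m$ (i.e., $h \in \tilde{G}_m(u)$) and $h^m \in cl_{C_G(u)}(y)$, which is precisely the defining condition of $\tilde{G}_m^m(u)$.

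There is no real obstacle here: the lemma is essentially a definitional rephrasing that replaces the three-step construction given after Definition \ref{Gmm} (raise to $m$-th power, sort by $C_G(u)$-conjugacy, pick representatives) with the quantitative non-vanishing criterion supplied by $\Gamma_m$. The only subtlety worth making explicit in the writeup is why $h^m$ lies in $C_G(u)$ to begin with, and this has already been justified in the discussion preceding Definition \ref{Gmu}.
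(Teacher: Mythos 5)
Your proof is correct: the paper gives no proof of this lemma at all, treating it as an immediate consequence of Definitions \ref{Gmm} and \ref{gamma}, and your definitional unwinding (with the remark that $(uh)^m = h^m$ forces $h^m \in C_G(u)$, so the conjugacy-class condition makes sense) is exactly the intended argument. Nothing further is needed.
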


\begin{prop}\label{mine} Let $u$ be a fixed representative of a conjugacy class of $G$, and let $\eta$ be an irreducible character of $C_G(u)$.  Then the $m^\text{th}$ Frobenius-Schur indicator of a character $\chi$ of $D(G)$ induced from $\eta$ is given by
$$\nu_m(\chi) = \frac{1}{|C_{G}(u)|} \, \sum_{y\in \tilde{G}_m^m(u)} \Gamma_m(u, y)\eta(y).$$\normalsize
\end{prop}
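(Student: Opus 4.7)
The plan is to start from Corollary \ref{ind formula}, namely
$$\nu_m(\chi) = \frac{1}{|C_G(u)|} \sum_{h \in \tilde{G}_m(u)} \eta(h^m),$$
and re-index the sum by grouping terms according to the conjugacy class of $h^m$ in $C_G(u)$. Recall that as noted in the setup for Corollary \ref{ind formula}, every $h\in\tilde{G}_m(u)$ automatically satisfies $h^m\in C_G(u)$, so the expression $\eta(h^m)$ makes sense, and since $\eta$ is a character of $C_G(u)$ it is constant on $C_G(u)$-conjugacy classes.

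First I would partition $\tilde{G}_m(u)$ as a disjoint union indexed by the set of $C_G(u)$-conjugacy classes that occur among the powers $h^m$. Using the fixed set $\mathfrak{R}_{C_G(u)}$ of class representatives, this gives
$$\sum_{h\in \tilde{G}_m(u)} \eta(h^m) \;=\; \sum_{y\in \mathfrak{R}_{C_G(u)}} \;\sum_{\substack{h\in \tilde{G}_m(u)\\ h^m\in cl_{C_G(u)}(y)}} \eta(h^m).$$
Next, because $\eta$ is a class function on $C_G(u)$, every inner summand equals $\eta(y)$, so that inner sum collapses to a count times $\eta(y)$. By Definition \ref{gamma}, that count is exactly $\Gamma_m(u,y)$, giving
$$\sum_{h\in \tilde{G}_m(u)} \eta(h^m) \;=\; \sum_{y\in \mathfrak{R}_{C_G(u)}} \Gamma_m(u,y)\,\eta(y).$$

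Finally I would trim the indexing set. By Lemma \ref{3.1.5}, $\Gamma_m(u,y)=0$ for every $y\in \mathfrak{R}_{C_G(u)}\setminus \tilde{G}_m^m(u)$, so those terms contribute nothing and the sum can be restricted to $y\in \tilde{G}_m^m(u)$. Dividing by $|C_G(u)|$ yields the desired formula.

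There is no real obstacle here: the statement is essentially a bookkeeping reformulation of Corollary \ref{ind formula}, designed to make the computation faster in GAP by replacing a sum over the (possibly large) set $\tilde{G}_m(u)$ with a sum over the (typically much smaller) set $\tilde{G}_m^m(u)$ of $C_G(u)$-conjugacy class representatives of $m$-th powers. The only subtlety worth double-checking is that $\Gamma_m(u,y)$ depends only on the $C_G(u)$-class of $y$ (noted just after Definition \ref{gamma}), which is what makes the grouping by representatives well-defined.
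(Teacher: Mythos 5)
Your proof is correct and follows exactly the paper's own argument: regroup the sum in Corollary \ref{ind formula} by the $C_G(u)$-conjugacy class of $h^m$, use that $\eta$ is a class function to replace each group of terms by $\Gamma_m(u,y)\eta(y)$, and then restrict to $\tilde{G}_m^m(u)$ via Lemma \ref{3.1.5}. The only difference is that you spell out the intermediate steps more explicitly than the paper does.
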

\begin{proof}  Using the formula in Corollary \ref{ind formula} and Definitions \ref{Gmm} and \ref{gamma}, we have
\begin{align*} \nu_m(\chi) & = \frac{1}{|C_{G}(u)|} \, \sum_{h\in \tilde{G}_m(u)} \eta(h^m) \\
			 & =  \frac{1}{|C_{G}(u)|} \, \sum_{y\in \mathfrak{R}_{C_G(u)}} \Gamma_m(u, y)\ \eta(y) \\
			 & =  \frac{1}{|C_{G}(u)|} \, \sum_{y\in \tilde{G}_m^m(u)} \Gamma_m(u, y)\ \eta(y)
\end{align*}\normalsize
\end{proof}
The formula in Proposition \ref{mine} is the function we used to calculate all the higher indicators for $D(S_n)$ for $n \leq 10$.  Further discussion on how we translated this formula into GAP functions is found in Section 5.

\subsection{}
\bf Equivalent Indicator Values \rm   

There are an infinite number of higher indicators $\nu_m$ for a single character $\chi$ of $D(G)$, but in order to find all of them, it turns out that we only need to calculate a small set of them.  We only need to find $\nu_d$ where $d$ is a divisor of the exponent of the group $G$.  Recall, the \it exponent \rm of a group $G$ is the smallest positive integer $e$ such that $g^e = 1$ for all $g \in G$.  We write $e = exp(G)$ for the exponent of $G$.  Thus all of our indicator calculations found in Section 4 only included such $\nu_d$.   

[IMM] showed that if $\nu_d(\chi)$ is an integer for every character $\chi$, then in fact each $\nu_m(\chi)$ is equal to some $\nu_d(\chi)$, and so all the indicators will be found by just finding the $\nu_d$.

\begin{thm}\label{divisors of exp}\rm [IMM] \it Let $\chi$ be an irreducible character of $D(G)$, induced from $\eta$ an irreducible character on $C_G(u)$, for $u \in G$ fixed.  Let $e$ = exp$(G)$, the exponent of the group $G$, and say $m \in \mathbb{N}$ and $d$ = gcd$(m,e)$.  Then
\newline
$1$.  If $\nu_d(\chi) \in \mathbb{Z}$, for all $\chi$ induced from characters on $C_G(u)$, then $\nu_m(\chi) \in \mathbb{Z}$, for all $\chi$.
\newline
$2$.  If $\nu_d(\chi) \in \mathbb{Z}$, $m=dk$ and $(k,e) = 1$, then $\nu_{dk}(\chi) = \nu_d(\chi)$.
\newline
$2'$.  If $\nu_d(\chi) \in \mathbb{Z}$, and $m=dk$, then $\nu_{dk}(\chi) = \nu_d(\chi)$. \rm 
\end{thm}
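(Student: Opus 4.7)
The plan is to exploit the Galois action on the cyclotomic field $\Q(\zeta_e)$. Every element of $G$ has order dividing $e$, so each value $\eta(y)$ for $y \in C_G(u)$ is a sum of $e$-th roots of unity, and Corollary~\ref{ind formula} expresses $\nu_m(\chi)$ as a rational combination of such values. Hence $\nu_m(\chi) \in \Q(\zeta_e)$ for every $m$. I will first establish the Galois compatibility $\sigma_k(\nu_d(\chi)) = \nu_{dk}(\chi)$ for every $k$ coprime to $e$, from which part~2 is immediate; then reduce part~2' to part~2 via a Chinese remainder construction; and finally note that part~1 follows from part~2'.

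For the compatibility, fix $k$ with $\gcd(k,e) = 1$ and let $\sigma_k$ be the element of $\mathrm{Gal}(\Q(\zeta_e)/\Q)$ sending $\zeta_e \mapsto \zeta_e^k$. Two ingredients are needed. First, since the order of any $y \in C_G(u)$ divides $e$, each eigenvalue of $\eta(y)$ is raised to the $k$-th power by $\sigma_k$, giving the standard identity $\sigma_k(\eta(y)) = \eta(y^k)$. Second, I claim $\tilde{G}_d(u) = \tilde{G}_{dk}(u)$: letting $k^{-1}$ be an integer inverse of $k$ modulo $e$, the equation $(uh)^d = h^d$ yields $(uh)^{dk} = h^{dk}$ by raising to the $k$-th power, while the converse holds because $g^{dkk^{-1}} = g^d$ for every $g \in G$. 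Applying $\sigma_k$ termwise to the formula of Corollary~\ref{ind formula} and combining these ingredients gives
\[\sigma_k(\nu_d(\chi)) \;=\; \frac{1}{|C_G(u)|} \sum_{h \in \tilde{G}_d(u)} \eta(h^{dk}) \;=\; \frac{1}{|C_G(u)|} \sum_{h \in \tilde{G}_{dk}(u)} \eta(h^{dk}) \;=\; \nu_{dk}(\chi).\]

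Part~2 is then immediate: if $\nu_d(\chi) \in \Z$ it is fixed by every element of $\mathrm{Gal}(\Q(\zeta_e)/\Q)$, so $\nu_{dk}(\chi) = \sigma_k(\nu_d(\chi)) = \nu_d(\chi)$ whenever $\gcd(k,e) = 1$. To upgrade to part~2', suppose $m = dk$ with $d = \gcd(m,e)$; then $\gcd(k,e/d) = 1$, so the Chinese remainder theorem produces $k'$ with $k' \equiv k \pmod{e/d}$ and $\gcd(k',e) = 1$. For any $g \in G$ the order of $g^d$ divides $e/d$, so $g^{dk} = g^{dk'}$; applying this with $g = h$ and $g = uh$ gives both $\tilde{G}_{dk}(u) = \tilde{G}_{dk'}(u)$ and $\eta(h^{dk}) = \eta(h^{dk'})$, whence $\nu_{dk}(\chi) = \nu_{dk'}(\chi)$, and part~2 applied to $k'$ identifies this with $\nu_d(\chi)$. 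Part~1 is a direct consequence of part~2', since under its hypotheses $\nu_m(\chi) = \nu_{dk}(\chi) = \nu_d(\chi) \in \Z$.

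The main technical hurdle is the Galois compatibility step, where one must verify simultaneously that the summation set $\tilde{G}_d(u)$ is transported to $\tilde{G}_{dk}(u)$ and that the character values align as $\sigma_k(\eta(h^d)) = \eta(h^{dk})$; once these bookkeeping identities are in hand the remainder of the argument reduces to elementary cyclotomic arithmetic together with a Chinese-remainder lift.
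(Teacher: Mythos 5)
The paper does not prove this theorem: it is quoted from [IMM] (listed as ``in preparation''), with only the remark that the equivalence of 2 and $2'$ is due to Ng. So there is no in-paper argument to measure you against, and your proof has to stand on its own. It does. The Galois compatibility $\sigma_k(\nu_d(\chi)) = \nu_{dk}(\chi)$ is correctly established: $\tilde{G}_d(u) = \tilde{G}_{dk}(u)$ follows from raising to the $k$-th and $k^{-1}$-th powers exactly as you say (using $g^e = 1$ to kill the extra factor in $kk^{-1} = 1 + te$), and $\sigma_k(\eta(y)) = \eta(y^k)$ is the standard eigenvalue argument, legitimate here because $y = h^d$ lies in $C_G(u)$ (the paper records that $(uh)^d = h^d$ forces $h^d \in C_G(u)$, which is what makes $\eta(h^d)$ meaningful) and has order dividing $e$. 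Part 2 then falls out since integers are Galois-fixed. Your reduction of $2'$ to 2 is the content of Ng's observation: from $d = \gcd(dk,e)$ you get $\gcd(k, e/d) = 1$, the CRT lift to $k'$ coprime to $e$ with $k' \equiv k \pmod{e/d}$ exists (one imposes $k' \equiv 1$ modulo the primes of $e$ not dividing $e/d$), and $\mathrm{ord}(g^d) \mid e/d$ gives $g^{dk} = g^{dk'}$ for all $g$, hence equality of both the summation sets and the summands. Part 1 is indeed immediate from $2'$ since $d \mid m$. The one stylistic caveat is that your part 1 uses only the integrality of $\nu_d(\chi)$ for the single $\chi$ in question, which is weaker than the stated hypothesis, so nothing is lost. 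This is a complete and correct proof of a result the paper leaves as a citation.
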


It should be noted that the equivalence of 2 and $2'$ in Theorem \ref{divisors of exp} is due to Richard Ng.

\subsection{}
\bf I-equivalent Irreducible Characters \rm   

When first calculating all the indicators, we noticed that many distinct irreducible characters have the same set of indicator values.  We decided to create equivalence classes of characters to avoid repeating the same indicators over and over again in our tables.

\begin{df}\rm Two characters $\chi$ and $\eta$ of $D(G)$ are indicator equivalent or \it I-equivalent \rm if $\nu_m(\chi) = \nu_m(\eta)$ for all positive integers $m$.  Conversely, two characters $\chi$ and $\eta$ of $D(G)$ are \it I-inequivalent \rm if there exists an $m$ for which $\nu_m(\chi) \neq \nu_m(\eta)$.
\end{df}

As we saw at the end of Section 2 in Corollary \ref{why mix}, given an automorphism $\sigma$ of $D(G)$, two quasi-equivalent characters $\chi$ and $\chi^\sigma$ are I-equivalent.

I-equivalent is an equivalence relationship, so all the irreducible characters of $D(S_n)$ can be collected by their \it I-equivalence class\rm.  Most irreducible character I-equivalence classes of $D(S_n)$ only contain characters induced from the same centralizer $C_{S_n}(u)$, however in a few instances this is not the case.  

\begin{df}  \rm We say that an irreducible character I-equivalence class is

1) \it homogenous \rm if all the characters in that class are induced from the same centralizer $C_G(u)$, or 

2) \it mixed \rm if it contains characters induced from different centralizers of $G$.  
\end{df}

\begin{prop}\label{homo prop} \

($1$)  The irreducible character I-equivalency classes of $D(S_5)$, $D(S_7)$, $D(S_8)$, $D(S_9)$, and $D(S_{10})$ are all homogenous.

($1'$)  If the irreducible characters $\chi_1$ and $\chi_2$ of $D(S_n)$ have the same indicator values for all $m$, then $\chi_1$ and $\chi_2$ are induced from the same centralizer $C_{S_n}(u)$ for $n = 5$, $7 \leq n \leq 10$.

(2)  For $D(S_6)$, the only irreducible character I-equivalence classes that are mixed may be as expected from the outer automorphism on $S_6$.

\begin{proof} The first statement is proven by observation of the I-equivalency classes as recorded in Section 4 and Appendix B of this paper.  The second statement considers the outer automorphism $\sigma$ of $S_6$ that has order 2 given in [R] and explicitly provided in Definition \ref{outer auto}.  This outer automorphism maps transpositions to the product of three transpositions, 3-cycles to the product of two 3-cycles, 6-cycles to the product of a 3-cycle and a transposition, and all other elements to their same cycle type.  When we look at the irreducible character I-equivalency classes we find the characters induced from $C_{S_6}( (1,2) )$ are mixed with or I-equivalent to characters induced from $C_{S_6}( (1,2)(3,4)(5,6) )$.  Characters induced from $C_{S_6}( (1,2,3) )$ are mixed with or I-equivalent to characters induced from $C_{S_6}( (1,2,3)(4,5,6) )$.  Characters induced from $C_{S_6}( (1,2,3)(4,5) )$ are mixed with or I-equivalent to characters induced from $C_{S_6}( (1,2,3,4,5,6) )$.  All other characters, induced from other centralizers are in homogeneous irreducible character I-equivalence classes.  This is consistent with the outer automorphism of $S_6$ and with the fact that an irreducible character induced from $C_{S_6}(u)$ is I-equivalent to another one induced from $C_{S_6}(\sigma(u))$ as was shown in Lemma \ref{why mix1} and Proposition \ref{why mix}. 
\end{proof}
\end{prop}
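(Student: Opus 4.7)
The plan is to address parts (1)/($1'$) and part (2) separately, each combining a theoretical expectation coming from the automorphism framework of Section~2 with direct verification against the computed data.

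For parts (1) and ($1'$), I would first recall the general principle that whenever $\sigma\in\mathrm{Aut}(G)$, Proposition~\ref{why mix} and Lemma~\ref{why mix1} together imply that any irreducible character of $D(G)$ induced from $C_G(u)$ is I-equivalent to one induced from $C_G(\sigma(u))$. For $n=5$ and $n\geq 7$, every automorphism of $S_n$ is inner, so $\sigma(u)$ is conjugate to $u$ in $S_n$ and $C_{S_n}(\sigma(u))$ is simply a conjugate copy of $C_{S_n}(u)$; automorphisms therefore produce no \emph{forced} cross-centralizer I-equivalences. The claim is that no \emph{accidental} cross-centralizer I-equivalences arise either, and this is verified by direct inspection of the I-equivalence classes tabulated in Section~4 and Appendix~B: one checks, case by case, that in every recorded class all member characters are induced from the same centralizer $C_{S_n}(u)$.

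For part (2), the strategy is to combine the theoretical prediction from the outer automorphism $\sigma$ of $S_6$ (Definition~\ref{outer auto}) with the observed data. By Proposition~\ref{why mix} and Lemma~\ref{why mix1}, any character induced from $C_{S_6}(u)$ must be I-equivalent to one induced from $C_{S_6}(\sigma(u))$. The tabulated action of $\sigma$ on cycle types shows that $\sigma$ exchanges exactly three pairs of cycle types---transpositions with products of three transpositions, $3$-cycles with products of two $3$-cycles, and $6$-cycles with products of a $3$-cycle and a transposition---while fixing all other cycle types. Consequently, the cross-centralizer I-equivalences that are forced by the outer automorphism occur precisely between the pairs $C_{S_6}((1,2))\leftrightarrow C_{S_6}((1,2)(3,4)(5,6))$, $C_{S_6}((1,2,3))\leftrightarrow C_{S_6}((1,2,3)(4,5,6))$, and $C_{S_6}((1,2,3)(4,5))\leftrightarrow C_{S_6}((1,2,3,4,5,6))$. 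One then inspects the recorded I-equivalence classes for $D(S_6)$ to confirm that every observed mixed class falls into one of these three types and that no other cross-centralizer mixing occurs.

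The main obstacle I anticipate is the verification step in both parts: namely, ruling out accidental I-equivalences that are not forced by any automorphism. Because I-equivalence is defined by infinitely many conditions $\nu_m$, one needs to know that a finite subset suffices. Fortunately, Theorem~\ref{divisors of exp} reduces the check to the divisors $d$ of $\mathrm{exp}(S_n)$, making each instance a finite computation that has already been carried out via the formula in Proposition~\ref{mine}. Once the tables are in hand, both parts reduce to finite case-checks against the recorded data, and no further theoretical input is needed.
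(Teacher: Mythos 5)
Your proposal is correct and follows essentially the same route as the paper: parts (1) and ($1'$) are established by direct inspection of the tabulated I-equivalence classes in Section 4 and Appendix B, and part (2) combines the outer automorphism of $S_6$ with Lemma \ref{why mix1} and Proposition \ref{why mix} to explain the observed mixed classes. Your added remarks---that inner automorphisms force no cross-centralizer equivalences for $n\neq 6$, and that Theorem \ref{divisors of exp} reduces I-equivalence to a finite check---are consistent with, and slightly more explicit than, the paper's argument.
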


We also note that the mixed I-equivalence classes of $D(S_3)$ and $D(S_4)$ are not unexpected due to the small sizes of $S_3$ and $S_4$.  
Proposition \ref{homo prop} raises a natural question.
\begin{qu} Is it true for all $n \geq 5$, $n \neq 6$, that all irreducible character I-equivalency classes of $D(S_n)$ are homogenous? \end{qu}  We note that this question is related to Question \ref{all autos?}.

We tried to see if there was a connection between characters being I-equivalent and the centralizers from which they were induced.  This is why in Section 4 we give details about the centralizer groups, specifically noting if they are abelian or not.  Recall that all of the irreducible characters of an abelian group $G$ are linear.
 
\begin{lem}\label{S3&S4}  If $n = 3$ or $4$, then the irreducible characters of $D(S_n)$ induced from the same centralizer $C_{S_n}(u)$ are I-equivalent if and only if $C_{S_n}(u)$ is abelian.
\begin{proof}  By observation, as recorded in Section 4, Sections 1 and 2. \end{proof}
\end{lem}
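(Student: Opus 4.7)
The plan is a finite, direct verification of the sort the author summarises as ``by observation.'' First I would enumerate the conjugacy class representatives $u$ of $S_n$ for $n=3,4$ and their centralizers: in $S_3$ the representatives $e,(1\,2),(1\,2\,3)$ have centralizers $S_3$, $\BZ_2$, $\BZ_3$; in $S_4$ the representatives $e,(1\,2),(1\,2)(3\,4),(1\,2\,3),(1\,2\,3\,4)$ have centralizers $S_4$, $\BZ_2\times\BZ_2$, $D_8$, $\BZ_3$, $\BZ_4$. The nonabelian centralizers are therefore exactly $S_3$, $S_4$, and $D_8$, attached respectively to $u=e$ in $S_3$, $u=e$ in $S_4$, and $u=(1\,2)(3\,4)$ in $S_4$.

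Next, by Theorem~\ref{divisors of exp} the full indicator sequence of any irreducible character of $D(S_n)$ is determined by $\nu_d$ with $d\mid\exp(S_n)$, so it suffices to compute $\nu_d$ for $d\in\{1,2,3,6\}$ when $n=3$ and $d\in\{1,2,3,4,6,12\}$ when $n=4$. For each pair $(u,d)$ I would apply Proposition~\ref{mine}: enumerate $\tilde{G}_d^{d}(u)\subseteq \mathfrak{R}_{C_{S_n}(u)}$ and the multiplicities $\Gamma_d(u,y)$, and then read off
$$\nu_d(\chi)\;=\;\frac{1}{|C_{S_n}(u)|}\sum_{y\in\tilde{G}_d^{d}(u)}\Gamma_d(u,y)\,\eta(y)$$
for every irreducible character $\eta$ of the centralizer.

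For the direction nonabelian $\Rightarrow$ not all induced characters are I-equivalent: when $u=e$ the centralizer is $S_n$ itself and, by Remark~\ref{triv induced}, the indicators of the induced characters of $D(S_n)$ coincide with the ordinary Frobenius--Schur indicators of the corresponding characters of $S_n$; the trivial and sign characters of $S_n$ already differ at $\nu_1$ (values $1$ and $0$), so their induced characters in $D(S_n)$ are I-inequivalent. For the one remaining nonabelian case $u=(1\,2)(3\,4)$, $C=D_8$, I would exhibit a divisor $d$ of $12$ on which the $2$-dimensional character of $D_8$ and at least one linear character of $D_8$ induce characters of $D(S_4)$ with distinct $\nu_d$ values, reading this off from the tables of Section~4.

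For the converse, abelian $\Rightarrow$ all induced characters are I-equivalent, the task reduces to showing that for every abelian centralizer $C$ and every relevant divisor $d$ the quantity $\sum_{y}\Gamma_d(u,y)\,\eta(y)$ is independent of the linear character $\eta$ of $C$. I expect this to be the main obstacle: it is not an \emph{a priori} structural identity but a numerical coincidence peculiar to these small cases, and for $C=\BZ_2\times\BZ_2$ (four linear characters) and $C=\BZ_4$ (four linear characters, two of them complex) one must verify agreement across every divisor of $12$. The verification is tedious but mechanical once the tables of $\tilde{G}_d^{d}(u)$ and $\Gamma_d(u,y)$ are in hand, and is precisely what is recorded in Section~4.
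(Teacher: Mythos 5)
Your proposal is correct and is essentially the paper's own proof: a finite verification against the computed data of Sections 4.1 and 4.2, with the non-abelian cases ($C_{S_n}(\mathrm{id})=S_n$ via Remark \ref{triv induced}, and $C_{S_4}((1\,2)(3\,4))\cong D_8$ via, e.g., $\nu_3(\chi_{3.1})=1\neq 2=\nu_3(\chi_{3.5})$) handled exactly as you describe. The one refinement worth noting is that the abelian direction, which you flag as the main obstacle and call a numerical coincidence, is in fact automatic: for every abelian centralizer of $S_3$ and $S_4$ the computed sets $\tilde{G}_m^m(u)$ are all $\emptyset$ or $\{\mathrm{id}\}$, so Lemma \ref{I-equiv} applies and the sum $\sum_{y}\Gamma_m(u,y)\,\eta(y)$ is independent of the linear character $\eta$ simply because $\eta(\mathrm{id})=1$ for all of them.
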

Lemma \ref{S3&S4}, however, does not extend to the irreducible characters of larger doubles, such as $D(S_5)$, as we see in the following example.  

\begin{ex}\rm  \

(1) In $S_5$, let $u_4 = (1, 2, 3)$.  Then its centralizer is $C_{S_5}(u_4) \cong C_6$, the cyclic group of order 6, which is abelian.  However, the irreducible characters induced from $u_4$ are broken into two homogenous I-equivalency classes.  Thus an abelian centralizer does not imply all irreducible characters induced from that centralizer are in the same I-equivalency class. 

(2)  Now let $u_5 = (1,2,3) (4,5)$.  The centralizers of $u_4$ and $u_5$ are identical, since $C_{S_5}(u_5) \cong C_6$.  Yet, all the irreducible characters induced from $u_5$ are in the same single homogenous I-equivalency class.  Thus irreducible characters induced from isomorphic centralizers are not necessarily I-equivalent, nor will those isomorphic centralizers have the same number of irreducible character I-equivalency classes.
\end{ex}

We must also consider the set we sum over when computing the higher Frobenius-Schur indicators, that is $\tilde{G}_m^m(u)$.  From our computations, we noticed that if the sets $\tilde{G}_m^m(u)$ were not empty, then they contained the identity element.  We also noticed that a portion of the $\tilde{G}_m^m(u)$ contained only the identity.

\begin{lem}\label{I-equiv}  Let $u$ be a fixed conjugacy class representative of $G$.  If $C_{G}(u)$ is abelian, and $\tilde{G}_m^m(u) = \emptyset$ or $\tilde{G}_m^m(u)= \{ \text{id} \}$ for each $m$, then all $D(G)$ characters induced from $C_G(u)$ will be I-equivalent.

\begin{proof}  Let $\chi_1$ and $\chi_2$ be irreducible characters of $D(G)$ induced from irreducible characters of $C_{G}(u)$, $\eta_1$ and $\eta_2$ respecitively.  Since $C_{G}(u)$ is abelian, that means $\eta_1$ and $\eta_2$ are linear characters, so $\eta_1(\text{id}) = 1 = \eta_2(\text{id})$.  Now for a fixed $m$, if $\tilde{G}_m^m(u) = \emptyset$, then $\nu_m(\chi_1) = 0 = \nu_m(\chi_2)$.  If $\tilde{G}_m^m(u)= \{ \text{id} \}$, then \newline 
\begin{align*} \nu_m(\chi_1) & = \frac{1}{|C_{G}(u)|} \, \sum_{y\in \tilde{G}_m^m(u)} \Gamma_m(u, y)\ \eta_1(y) \\ 
					& = \frac{1}{|C_{G}(u)|} \ \Gamma_m(u, \text{id})\ \eta_1(\text{id}) \\
					& = \frac{1}{|C_{G}(u)|} \ \Gamma_m(u, \text{id})\ \eta_2(\text{id}) \\
					& =  \frac{1}{|C_{G}(u)|} \, \sum_{y\in \tilde{G}_m^m(u)} \Gamma_m(u, y)\ \eta_2(y) \\ 
					& = \nu_m(\chi_2)
\end{align*}\normalsize  Thus $\chi_1$ and $\chi_2$ are I-equivalent.
\end{proof}
\end{lem}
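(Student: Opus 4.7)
The plan is to apply the formula from Proposition \ref{mine} directly and do a case analysis on $\tilde{G}_m^m(u)$, using the abelian hypothesis only to control $\eta(\mathrm{id})$. Fix two irreducible characters $\chi_1,\chi_2$ of $D(G)$ induced respectively from irreducible characters $\eta_1,\eta_2$ of $C_G(u)$. The formula
$$\nu_m(\chi_i) = \frac{1}{|C_G(u)|}\sum_{y\in \tilde{G}_m^m(u)} \Gamma_m(u,y)\,\eta_i(y)$$
makes clear that the only $\eta_i$-dependence enters through the values $\eta_i(y)$ for $y$ ranging over $\tilde{G}_m^m(u)$. So my task reduces to showing, for each fixed $m$, that these values either don't contribute or agree between $\eta_1$ and $\eta_2$.

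First I would handle the case $\tilde{G}_m^m(u)=\emptyset$. Here the indexing set of the sum is empty, so $\nu_m(\chi_1)=0=\nu_m(\chi_2)$ trivially, regardless of which $\eta_i$ is chosen. Next I would handle the case $\tilde{G}_m^m(u)=\{\mathrm{id}\}$. The sum now collapses to a single term, namely $\frac{1}{|C_G(u)|}\,\Gamma_m(u,\mathrm{id})\,\eta_i(\mathrm{id})$. This is where the abelian hypothesis on $C_G(u)$ enters: every irreducible character of an abelian group is linear (degree one), so $\eta_1(\mathrm{id}) = 1 = \eta_2(\mathrm{id})$. Therefore this single term equals $\frac{1}{|C_G(u)|}\,\Gamma_m(u,\mathrm{id})$, which depends on $u$ and $m$ but not on which induced character we chose, and so $\nu_m(\chi_1)=\nu_m(\chi_2)$.

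Since this holds for every $m$ by hypothesis, I conclude $\chi_1$ and $\chi_2$ have identical indicator sequences, i.e.\ they are I-equivalent. As $\chi_1,\chi_2$ were arbitrary characters induced from $C_G(u)$, all such characters lie in a single I-equivalence class. There is no real obstacle here: the proof is essentially a one-line application of Proposition \ref{mine} once one notes that abelian forces $\eta(\mathrm{id})=1$. The only minor thing to double-check while writing the proof is that the quantity $\Gamma_m(u,\mathrm{id})$ is defined intrinsically from $u$ and $G$ via Definition \ref{gamma}, so it genuinely does not involve the choice of $\eta_i$.
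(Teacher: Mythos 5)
Your proposal is correct and follows exactly the paper's argument: apply the formula of Proposition \ref{mine}, note the empty case gives $0$ for both characters, and in the $\{\mathrm{id}\}$ case use that abelian $C_G(u)$ forces all its irreducible characters to be linear so $\eta_1(\mathrm{id})=\eta_2(\mathrm{id})=1$. There is nothing to add.
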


We make one last observation reguarding Lemma \ref{I-equiv}.  We ask if Lemma \ref{I-equiv} can be weakened by omitting the assumption that $C_G(u)$ is abelian, since for $D(S_n)$ where $n\leq 10$, all the irreducible characters induced from the same $C_{S_n}(u)$ were I-equivalent exactly when for each $m$, $\tilde{G}_m^m(u)$ was empty or only contained the identity.  
We end this section with this question.
\begin{qu} Does the condition that for each $m$, $\tilde{G}_m^m(u) = \emptyset$ or $\tilde{G}_m^m(u) = \{ id \}$ imply that $C_G(u)$ is abelian? \end{qu}

\subsection{}
\bf Zero Valued Indicators \rm   

In this section we discuss when a higher indicator of a character of $D(S_n)$ may have a value of zero.  As mentioned in Remark \ref{triv induced}, some characters of $D(S_n)$ are trivially induced and have the same indicator values as those of $S_n$.  Since the higher indicators of $S_n$ have been known for some time, we would expect some of these trivially induced characters to have zero valued indicators - which some do.  So we exclude this situation (when $u = $ id) from our consideration of when $\nu_m(\chi) = 0$.  Another occurance of zero valued indicators that we ignore is when $m=1$.  When $m = 1$, all but the induced trivial character of $S_n$ have indicator values of zero, as we would expect.  Thus, when $\chi$ is trivially induced or $m=1$, we expect that $\nu_m(\chi)$ may be zero.

When first looking over our tables of indicators, we noticed that for odd $m$, there were many zero valued indicators.  More specifically, there were clusters of zeros coming from characters induced from the same centralizers, $C_G(u)$.  After inspecting the sets $\tilde{G}_m^m(u)$ that we sum over, we realized they were all empty, and that was why the indicator values were zero.  Note that we have already seen significance in $\tilde{G}_m^m(u)$ being empty in Lemma \ref{I-equiv}.  Now we consider how it affects the indicator values themselves.  $\tilde{G}_m^m(u)$ is empty if and only if $\tilde{G}_m(u)$ is  empty, thus the simplest case when $\nu_m(\chi) = 0$ is exactly when $\tilde{G}_m(u) = \{ h \in G \ | \ (uh)^m = h^m\} = \emptyset$. 

\begin{prop}\label{empty} If $u$ is an odd permutation of $G=S_n$ and $m$ is an odd positive integer, then $\tilde{G}_m(u) = \emptyset$.  Thus $\nu_m(\chi) = 0$ when $m$ is odd and $\chi$ is induced from an irreducible character of $C_{S_n}(u)$ for $u$ an odd permutation.
\begin{proof}
If $u$ is an odd permutation of $S_n$ then $\rm sgn \it (u) = -1$.  Recall that  for arbitrary permutations $a$ and $b$, $\rm sgn\it (ab) = \rm sgn\it (a)\rm sgn\it (b)$.  Thus we have for any $h \in S_n$, $\rm sgn \it (h^m) = \rm sgn \it (h)^m$, and $\rm sgn \it ((uh)^m) = \rm sgn \it (uh)^m$.  If $m$ is an odd positive integer, then $h^m$ has the same parity as $h$, and $(uh)^m$ has the same parity as $uh$.  

Thus for any $h \in S_n$
\begin{align*} 
\rm sgn \it ((uh)^m) = \rm sgn \it (uh) = \rm sgn \it (u) \rm sgn \it (h) = -\rm sgn \it (h) \neq \rm sgn \it (h) = \rm sgn \it (h^m)
\end{align*}
and so $(uh)^m$ can not equal $h^m$.  Thus $\tilde{G}_m(u) = \emptyset$, which means  $\nu_m(\chi) = 0$ when $m$ is odd and $\chi$ is induced from an irreducible character of $C_{S_n}(u)$ for $u$ an odd permutation.
\end{proof}
\end{prop}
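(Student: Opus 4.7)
The plan is to use the sign homomorphism $\operatorname{sgn}:S_n\to\{\pm 1\}$ to show that the defining condition of $\tilde{G}_m(u)$ cannot hold for any $h\in S_n$ when $u$ is odd and $m$ is odd. Recall from Definition \ref{Gmu} that $\tilde{G}_m(u)=\{h\in G \mid (uh)^m = h^m\}$, so it suffices to show the two sides have opposite signs (hence are distinct) for every $h$.

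First I would compute $\operatorname{sgn}((uh)^m)$ and $\operatorname{sgn}(h^m)$ using multiplicativity of $\operatorname{sgn}$. We have $\operatorname{sgn}((uh)^m)=(\operatorname{sgn}(u)\operatorname{sgn}(h))^m$ and $\operatorname{sgn}(h^m)=\operatorname{sgn}(h)^m$. With $u$ odd we have $\operatorname{sgn}(u)=-1$, so the first expression becomes $(-1)^m\operatorname{sgn}(h)^m$. With $m$ odd this equals $-\operatorname{sgn}(h)^m=-\operatorname{sgn}(h^m)$. Since $\operatorname{sgn}(h^m)\in\{\pm 1\}$, we conclude $\operatorname{sgn}((uh)^m)\neq\operatorname{sgn}(h^m)$, so the elements themselves cannot coincide. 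This holds for all $h\in S_n$, giving $\tilde{G}_m(u)=\emptyset$.

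For the second assertion, I would invoke either Corollary \ref{ind formula} or, equivalently, Proposition \ref{mine}. Since $\tilde{G}_m(u)=\emptyset$, the sum defining $\nu_m(\chi)$ is over an empty index set and therefore vanishes. (Note that $\tilde{G}_m^m(u)$ is likewise empty when $\tilde{G}_m(u)$ is, since $\tilde{G}_m^m(u)$ is built from $m$-th powers of elements of $\tilde{G}_m(u)$.) This yields $\nu_m(\chi)=0$ for every irreducible character $\chi$ induced from $C_{S_n}(u)$.

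There is essentially no obstacle here: the argument is a clean parity computation, and the main observation is simply that the constraint $(uh)^m=h^m$ forces the two sides to share a sign, which the oddness of both $u$ and $m$ prevents. The only points to be careful about are keeping track of which exponents of $-1$ appear (hence the need for $m$ odd, not merely $u$ odd) and citing the correct indicator formula so the empty-set conclusion actually delivers $\nu_m(\chi)=0$ rather than an undefined expression.
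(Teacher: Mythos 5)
Your proof is correct and follows essentially the same route as the paper: both arguments compare $\operatorname{sgn}((uh)^m)=(-1)^m\operatorname{sgn}(h)^m$ with $\operatorname{sgn}(h^m)=\operatorname{sgn}(h)^m$, use the oddness of $u$ and $m$ to force opposite signs, conclude $\tilde{G}_m(u)=\emptyset$, and then read off $\nu_m(\chi)=0$ from the indicator formula. No issues.
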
 
\vskip 1pc
  
Certainly $\nu_m(\chi)$ can be $0$ when $\tilde{G}_m^m(u)$ is not empty, although this appears somewhat rare when we consider all indicators of $D(S_3)$ through $D(S_{10})$.  First, we formally define the notion of expected zeros and unexpected zeros using Proposition \ref{empty}.  

\begin{df}\rm  We say $\nu_m(\chi) = 0$  is an \it expected zero \rm if $m=1$, or $\chi$ is trivially induced, or both $u$ is an odd permutation and $m$ is an odd positive integer.  Conversely, we say $\nu_m(\chi) = 0$  is an \it unexpected zero \rm if $m \neq 1$,  $\chi$ is not trivially induced, and $u$ and $m$ are not both odd.  We also may refer to an unexpected zero as an \it unexpected zero valued indicator.\rm
\end{df}
Next, recall that by Theorem \ref{divisors of exp}, to find all possible values of indicators $\nu_m(\chi)$, it suffices to find $\nu_d(\chi)$ where $d$ divides the exponent $e$, provided $\nu_d(\chi) \in \BZ$.  Thus we say  a \it distinct non-trivial higher indicator \rm of $D(G)$ is an indicator $\nu_m$ such that $m \geq 3$ and $m$ is a divisor of the exponent of the group $G$.  Here we consider $\nu_1$ and $\nu_2$ to be trivial since for any irreducible character of $D(G)$, $\nu_1$ is equal to 0 unless $\chi$ is the trivial character and $\nu_2$ is always equal to 1.

Finally, a \it non-trivial indicator value \rm is the value of $\nu_m(\chi)$ where $\chi$ is not a trivially induced character and $m \geq 3$.

\begin{prop}\label{unex 0s} \

1)  $D(S_3)$, $D(S_4)$, and $D(S_5)$ have no unexpected zero valued indicators.

2)  $D(S_6)$ has only two unexpected zero valued indicators.  It has thirteen non-trivially induced I-equivalent character classes and 10 distinct non-trivial higher indicators.  Thus $D(S_6)$ has 130 non-trivial indicator values.

3)  $D(S_7)$ has only two unexpected zero valued indicators.  It has 33 non-trivially induced I-equivalent character classes and 22 distinct non-trivial higher indicators.  Thus $D(S_7)$ has 726 non-trivial indicator values.

4)  $D(S_8)$ has thirteen unexpected zero valued indicators.   It has 66 non-trivially induced I-equivalent character classes and 30 distinct non-trivial higher indicators.  Thus $D(S_8)$ has 1980 non-trivial indicator values.

5)  $D(S_9)$ has 38 unexpected zero valued indicators.  It has 107 non-trivially induced I-equivalent character classes and 46 distinct non-trivial higher indicators.  Thus $D(S_9)$ has 4922 non-trivial indicator values.

6)  $D(S_{10})$ has 21 unexpected zero valued indicators.  It has 196 non-trivially induced I-equivalent character classes and 46 distinct non-trivial higher indicators.  Thus $D(S_{10})$ has 9016 non-trivial indicator values.
\begin{proof}  By observation of indicator tables found in Section 4 and Appendix B.
\end{proof}
\end{prop}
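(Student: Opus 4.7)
The plan is to carry out, for each $n$ with $3 \leq n \leq 10$, the GAP-assisted enumeration already outlined in Section 3.1. One first picks a system of conjugacy class representatives $u$ of $S_n$, computes the centralizer $C_{S_n}(u)$, and lists its irreducible characters $\eta$. By Lemma \ref{repsofDG} the irreducible characters of $D(S_n)$ are indexed by the pairs $(u,\eta)$, and by Remark \ref{triv induced} exactly those with $u = \mathrm{id}$ are trivially induced and are to be set aside.

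For each remaining $\chi$ and each $m \geq 3$ that divides $e = \mathrm{exp}(S_n)$, I would evaluate
$$\nu_m(\chi) \ = \ \frac{1}{|C_{S_n}(u)|}\sum_{y \in \tilde{G}_m^m(u)} \Gamma_m(u,y)\,\eta(y)$$
as in Proposition \ref{mine}. Once it has been checked that every value lies in $\BZ$, Theorem \ref{divisors of exp} guarantees that the full sequence $\{\nu_m(\chi)\}_{m \in \BN}$ is determined by these finitely many values, so the table of distinct non-trivial higher indicators is complete. With the table in hand, partition the non-trivially induced characters into I-equivalence classes as in Section 3.3, and then inspect each entry: by Proposition \ref{empty} a zero is \emph{expected} precisely when $m=1$, or $\chi$ is trivially induced, or both $u$ is odd and $m$ is odd; the remaining zero entries are, by definition, the unexpected zeros. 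Counting rows (I-equivalence classes), columns (divisors $d\geq 3$ of $e$), their product (non-trivial indicator values), and the exceptional zero entries in each of the six tables then yields the six numerical assertions.

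The hard part is not mathematical but computational. A naive implementation based on Corollary \ref{old indicator formula} already overloads the machine at $n=7$, so the reductions of Corollary \ref{ind formula} and Proposition \ref{mine}, which replace summation over $C_{S_n}(u)$ (or over all of $\tilde{G}_m(u)$) by summation over conjugacy class representatives of $C_{S_n}(u)$, are essential; likewise the exponent restriction from Theorem \ref{divisors of exp} cuts the number of $m$'s to be considered from infinitely many down to the divisor set of $e$. Once those optimizations are correctly coded, each of the six claims reduces to mechanical book-keeping against the tables in Section 4 and Appendix B, which is exactly the observation the author records.
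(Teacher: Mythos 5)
Your proposal is correct and follows essentially the same route as the paper: the proposition is established by generating the indicator tables via the GAP implementation of Proposition \ref{mine} (restricted to divisors of the exponent by Theorem \ref{divisors of exp}) and then counting the I-equivalence classes, the non-trivial indicators, and the zeros not accounted for by Proposition \ref{empty}. The paper's own proof simply records this as ``by observation of the indicator tables,'' so your more explicit description of the pipeline and the book-keeping is the intended argument.
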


To summarize, 1.54\% of $D(S_6)$'s non-trivial indicator values are unexpected zeros, 0.28\% of $D(S_7)$'s non-trivial indicator values are unexpected zeros,  0.66\% of $D(S_8)$'s non-trivial indicator values are unexpected zeros, 0.77\% of $D(S_9)$'s non-trivial indicator values are unexpected zeros, and 0.23\% of $D(S_{10})$'s non-trivial indicator values are unexpected zeros.  Thus most zeros are expected.

Now in Table \ref{Table 3.1} below, we explicitly list the conditions in which the unexpected zeros from Proposition \ref{unex 0s} appear.  In Table \ref{Table 3.1}: 

$\bullet \ S_n$ tells us which double $D(S_n)$ has unexpected zero valued indicators,

$\bullet \ u$ is the conjugacy class representative of $S_n$ from whose centralizer the characters of $D(S_n)$ are induced, 

$\bullet\ m$ refers to which higher indicator $\nu_m$ had a zero value, and

$\bullet$ ``no. of chars with $ \nu_m = 0$ " is the number of I-equivalent character classes for which the indicator is zero when $u$ and $m$ are as specified.  In this column ``all" means that all the characters induced from $C_{S_n}(u)$ have a zero valued indicator for the specificed $m$.

\newpage



\begin{table}[ht] 
\caption{Unexpected Zero Valued Indicators}
$\begin{array}{c|c|cc}   
D(S_n) & u &\ m\ & \begin{array}{c} \text{no. of chars} \\ \text{with }\nu_m = 0 \end{array} 
 \rule[-7pt]{0pt}{20pt} \\ \hline
S_6 & \ (12)(34) \ & 3 & 1  \rule[0pt]{0pt}{13pt} \\ 
S_6 & (123) & 3 & 1 \\ \hline

S_7 & (12)(34) & 3 & 1\\ 
S_7 & (123) & 3 & 1 \\ \hline

S_8 & (12)(34) & 3, 5 & 2 \\ 
S_8 & (12)(34)(56)(78) & 5 & \rm all \  7 \\ 
S_8 & (123) & 3, 5 & 1 \\ \hline

S_9 & (12)(34) & 3, 5 & 3 \\ 
S_9 & (12)(34) &  5 \ \rm only & 3 \\ 
S_9 & (12)(34)(56)(78) & 5 & \rm all \  7 \\
S_9 & (123) & 3, 5 & 2 \\ 
S_9 & (123) & 5 \ \rm only & 2 \\
S_9 & (123)(456) & 5 & 3 \\
S_9 & (123)(456)(789) & 5 & \rm all \  6 \\
S_9 & (1234)(56) & 3, 5 & 1 \\
S_9 & (1234)(567)(89) & 5 & 1 \\
S_9 & (12345) & 3, 5 & 1 \\
S_9 & (12345)(67)(89) & 5 & \rm all \  2 \\ \hline

S_{10} & (12)(34) & 3, 5, 7, 15, 35 & 1 \\
S_{10} & (12)(34) & 3, 7 & 1 \\
S_{10} & (12)(34) & 3 \ \rm only & 1 \\
S_{10} & (12)(34)(56)(78) & 3 & 3 \\
S_{10} & (123) & 3, 5, 7, 15, 35 & 1 \\
S_{10} & (123) & 3 \ \rm only & 2 \\
S_{10} & (123)(456) & 3 & 1 \\
S_{10} & (1234)(56) & 3 & 1 \\
S_{10} & (12345) & 3 & 1\rule[-10pt]{0pt}{10pt}
\end{array} $\label{Table 3.1}
\end{table}

As we can see from the above table, it appears that the only time an unexpected zero valued indicator occurs is when $m$ is odd.  This raises the question:
\begin{qu}  Do unexpected zero valued indicators only occur when $m$ is odd? \end{qu}

We can also see that when $m=5$ and $D(S_n)$ is either $D(S_8)$ or $D(S_9)$, there are four occurances of all the induced characters of specific $C_{S_n}(u)$ having a zero valued indicator.  Upon closer inspection, we find that in those instances $\tilde{G}_5^5(u)$ is empty.


\subsection{}
\bf When $\tilde{G}_m(u)$ is not empty \rm   

In this section we consider the converse of Proposition \ref{empty}.  That is, we ask when it is true that if $u$ is an even permutation of $S_n$ or $m$ is an even positive integer, then $\tilde{G}_m(u) \neq \emptyset$.  

As we saw at the end of the last section, when $u$ is an even permutation of $S_n$ there are examples when $\tilde{G}_5(u) = \emptyset$, so only requiring $u$ to be an even permutation is not sufficient.  However, at least part of the converse of Proposition \ref{empty} is true.  Proposition \ref{notempty} below proves if $m$ is even, then $\tilde{G}_m(u)$ is not empty.

Interestingly enough, all our examples suggest that if $\tilde{G}_m(u)$ is not empty, then there exists an $h \in \tilde{G}_m(u) $ such that $h^m$ = id.  This conjecture is shown to be true for specific cases where $u$ is an even permutation and $m=3$; see Proposition \ref{m=3}.

\begin{lem}\label{m even u single} \rm
If $u$ is a cycle in $S_n$ of length $k$ and $m$ is a positive even integer, then there exists an $h \in S_n$ such that $ (uh)^m = h^m = $ id.  Thus $h \in \tilde{G}_m(u)$.

\begin{proof}  Let $u = (a_1 a_2 \ldots a_k)$, and
$$\begin{array}{rl}
h = & \left\{ 
		\begin{array}{ll} 
		(a_1 \ a_k) (a_2 \ a_{k-1}) \ldots (a_{\frac{k+1}{2}}) & \text{for } k \text{ odd} \\
		(a_1 \ a_k) (a_2 \ a_{k-1}) \ldots (a_{\frac{k}{2}} \ a_{\frac{k}{2}+1}) & \text{for } k \text{ even} 
     		\end{array} \right. \\
\end{array}$$
Now for $k$ odd, $h$ consists of $\frac{k-1}{2}$ transposition(s) and $n-k+1$ fixed point(s), and for $k$ even, $h$ consists of $\frac{k}{2}$ transposition(s) and $n-k$ fixed point(s).  

Thus $h^m = $ id, since $m$ is even.  
\newline Now consider
\[
\begin{array}{rl}
uh = & \left\{ 
		\begin{array}{ll} 
		(a_1) (a_2 \ a_k) (a_3 \ a_{k-1}) \ldots (a_{\frac{k+1}{2}} \ a_{\frac{k+3}{2}}) & \text{for } k \text{ odd} \\
		(a_1) (a_2 \ a_k) (a_3 \ a_{k-1}) \ldots (a_{\frac{k}{2}+1}) & \text{for } k \text{ even} 
     		\end{array} \right. \\
\end{array}
\]
for $k$ odd, $uh$ consists of $\frac{k-1}{2}$ transposition(s) and $n-k+1$ fixed point(s), and \newline for $k$ even, $uh$ consists of $\frac{k}{2}-1$ transposition(s) and $n-k+2$ fixed point(s).  
\newline \indent Thus $(uh)^m = $ id, since $m$ is even.  
Therefore $ (uh)^m = h^m = $ id, and $h \in \tilde{G}_m(u)$.

\end{proof}
\end{lem}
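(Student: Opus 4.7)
The plan is to construct an explicit $h$ that works for every even $m$, by finding an involution $h$ such that $uh$ is also an involution; then both $h^m$ and $(uh)^m$ are automatically the identity when $m$ is even. The natural candidate, given that $u=(a_1\, a_2 \cdots a_k)$, is the ``reflection'' of the cycle $u$, namely the permutation that swaps $a_i$ with $a_{k+1-i}$ and fixes every point outside the support of $u$. When $k$ is odd this leaves $a_{(k+1)/2}$ as a fixed point; when $k$ is even it gives a product of $k/2$ disjoint transpositions. In either case $h$ is a product of disjoint transpositions and fixed points, so $h$ is an involution and $h^m=\mathrm{id}$ whenever $m$ is even.

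The main step is then to compute $uh$ and show it is also an involution. First I would verify the formulas for $uh$ stated in the lemma by a direct calculation: for any $a_i$ in the support of $u$, compute $uh(a_i)=u(a_{k+1-i})=a_{k+2-i}$ (indices mod $k$), and check the one or two exceptional points that become fixed. For $k$ odd this yields $uh=(a_2\,a_k)(a_3\,a_{k-1})\cdots(a_{(k+1)/2}\,a_{(k+3)/2})$ with $a_1$ fixed; for $k$ even it yields $uh=(a_2\,a_k)(a_3\,a_{k-1})\cdots(a_{k/2}\,a_{k/2+2})$ with $a_1$ and $a_{k/2+1}$ fixed. In each case $uh$ is again a product of disjoint transpositions and fixed points, and all points outside $\{a_1,\ldots,a_k\}$ are fixed by both $u$ and $h$, hence by $uh$.

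With $h$ and $uh$ both involutions, the conclusion $h^m=(uh)^m=\mathrm{id}$ is immediate for any positive even $m$, so $h\in \tilde{G}_m(u)$ by the definition of $\tilde{G}_m(u)$. The transposition and fixed-point counts asserted in the lemma statement then follow by simply tallying the cycles written down above. The only place that needs genuine care, rather than mechanical verification, is the case analysis at the ``middle'' of the cycle, where the parity of $k$ determines whether the central index produces a fixed point (odd $k$) or the innermost transposition collapses into two fixed points inside $uh$ (even $k$); this is the only point where a careful index calculation is required, and is really the only obstacle, though a minor one.
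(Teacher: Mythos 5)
Your proposal is correct and follows essentially the same route as the paper: you construct the same ``reflection'' involution $h = (a_1\,a_k)(a_2\,a_{k-1})\cdots$, verify that $uh$ is again a product of disjoint transpositions (with the same index bookkeeping at the middle of the cycle), and conclude $h^m=(uh)^m=\mathrm{id}$ for even $m$. The computation $uh(a_i)=a_{k+2-i}$ and the resulting fixed-point analysis match the paper's formulas exactly.
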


\begin{ex} \rm In this example we explicitly construct the $h \in \tilde{G}_m(u)$ as described in the proof of Lemma \ref{m even u single} for both an even $k$ value (done in A) and an odd $k$ value (done in B).  Let $m=2s$ where $s$ is any positive integer and let $S_n=S_9$.
\newline  

\noindent{\bf A)}  If $u = (3 \ 5 \ 2 \ 1 \ 4 \ 7) = (a_1 \ a_2 \ a_3 \ a_4 \ a_5 \ a_6)$, so $k = 6$.  Then $h$ must be \newline
\indent $h = (a_1 \ a_6) (a_2 \ a_5) (a_3 \ a_4) = (3 \ 7) (5 \ 4) (2 \ 1)$, so \newline
\indent $uh =  (3 \ 5 \ 2 \ 1 \ 4 \ 7)(3 \ 7) (5 \ 4) (2 \ 1) = (3) (5 \ 7)(2 \ 4)(1) = (2 \ 4)(5 \ 7)$ \newline
\indent thus $h^m = (h^2)^s = \text{id} = ((uh)^2)^s = (uh)^m$. \newline
\noindent {\bf B)} If $u = (3 \ 5 \ 2 \ 1 \ 4 \ 7 \ 9) $ then $k = 7$, and $h$ must be \newline
\indent$h =  (3 \ 9) (5 \ 7) (2 \ 4)(1)$ or $h =  (3 \ 9) (5 \ 7) (2 \ 4)$, so

$uh =  (3 \ 5 \ 2 \ 1 \ 4 \ 7 \ 9)(3 \ 9) (5 \ 7) (2 \ 4) = (3)(5 \ 9)(2 \ 7)(1 \ 4) = (1 \ 4)(2 \ 7)(5 \ 9) $

thus $h^m = (h^2)^s = \text{id} = ((uh)^2)^s = (uh)^m$.
\end{ex}

\begin{prop}\label{notempty} \rm
If $m$ is an even positive integer, then $\tilde{G}_m(u) \neq \emptyset$. \newline 
\indent Specifically, there exists an $h \in \tilde{G}_m(u)$ such that $h^m = $ id.

\begin{proof}\rm
Let $u \in S_n$ such that $u=\sigma_1 \sigma_2 \ldots \sigma_l$ is its cycle decomposition (so all the $\sigma_i$ move disjoint subsets of ${1,2,\ldots,n}$).  Apply Lemma \ref{m even u single} to each $\sigma_i$ to find a corresponding $h_i$ such that $h_i^m = (\sigma_i h_i)^m = $ id.  Now $\sigma_i$ and $h_i$ move the same subset of ${1,2,\ldots,n}$ (or $h_i$ moves one less element than $\sigma_i$ moves).  Thus, not only are all the $h_i$ disjoint from each other, but $h_i$ is disjoint from $\sigma_j$ provided $i \neq j$.  Thus letting $h = h_1 h_2 \ldots h_l$, we get \newline
\indent $h^m = \ (h_1 \ldots h_l)^m\ =\ h_1^m  \ldots h_l^m\ =\ \text{id}^l \ =\ \text{id}$, and
$$\begin{array}{rcl}
(uh)^m & = \ ( \sigma_1 \sigma_2 \ldots \sigma_l h_1 h_2 \ldots h_l )^m  & = \ ( \sigma_1 h_1 \sigma_2 h_2 \ldots \sigma_l h_l )^m \\
 & & = \ (\sigma_1 h_1)^m (\sigma_2 h_2)^m \ldots (\sigma_l h_l)^m \ =\ \text{id}\ \\
\end{array} $$
Thus $(uh)^m = h^m$ implies $h$ is in $\tilde{G}_m(u)$ and $\tilde{G}_m(u) \neq \emptyset$.
\end{proof}
\end{prop}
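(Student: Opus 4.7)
The plan is to reduce the general case to the single-cycle case handled in Lemma \ref{m even u single} by using the cycle decomposition of $u$ and exploiting disjointness. First, I would write $u = \sigma_1 \sigma_2 \cdots \sigma_l$ as a product of pairwise disjoint cycles (so the supports of distinct $\sigma_i$ are disjoint subsets of $\{1, 2, \ldots, n\}$). For each $\sigma_i$, Lemma \ref{m even u single} supplies an element $h_i \in S_n$ with $h_i^m = (\sigma_i h_i)^m = \mathrm{id}$.

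The crucial structural observation I would make next is that the $h_i$ produced by the lemma's construction has support contained in the support of $\sigma_i$ (in the even-length case it is a product of $k/2$ transpositions within the support; in the odd-length case, $(k-1)/2$ transpositions within the support plus one fixed point of the cycle). Consequently the $h_i$ are pairwise disjoint, and for $i \neq j$ the permutation $h_i$ is disjoint from $\sigma_j$, so they commute.

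Now I would set $h := h_1 h_2 \cdots h_l$. Disjointness of the $h_i$ gives $h^m = h_1^m h_2^m \cdots h_l^m = \mathrm{id}$. For the other condition, I would use the pairwise commutation above to rearrange
\[
 uh \;=\; \sigma_1 \sigma_2 \cdots \sigma_l \, h_1 h_2 \cdots h_l \;=\; (\sigma_1 h_1)(\sigma_2 h_2)\cdots(\sigma_l h_l),
\]
and since each factor $\sigma_i h_i$ is supported on the support of $\sigma_i$, these factors are themselves pairwise disjoint and commute. Therefore
\[
 (uh)^m \;=\; (\sigma_1 h_1)^m (\sigma_2 h_2)^m \cdots (\sigma_l h_l)^m \;=\; \mathrm{id},
\]
so $(uh)^m = h^m = \mathrm{id}$, which exhibits $h$ as an element of $\tilde{G}_m(u)$ with $h^m = \mathrm{id}$.

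The only real obstacle is the bookkeeping step of verifying that the $h_i$ from Lemma \ref{m even u single} can be taken to have support inside the support of the corresponding cycle $\sigma_i$; once that is checked, disjointness yields both $h^m = \mathrm{id}$ and the commutation needed to factor $(uh)^m$. If $u = \mathrm{id}$, one may simply take $h = \mathrm{id}$, so the argument covers every $u \in S_n$.
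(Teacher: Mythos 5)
Your proof is correct and follows essentially the same route as the paper's: decompose $u$ into disjoint cycles, apply Lemma \ref{m even u single} to each cycle, note that each $h_i$ is supported inside the support of $\sigma_i$ so all factors commute, and conclude $(uh)^m = h^m = \mathrm{id}$ for $h = h_1\cdots h_l$. Your explicit handling of $u = \mathrm{id}$ is a small addition the paper omits but does not change the argument.
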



\begin{ex} \rm  In this example we explicitly construct the $h \in \tilde{G}_m(u)$ as described in the proof of Proposition \ref{notempty}. 
 Let $m=2s$ where $s \in \BN$, let $S_n=S_{15}$, and let  \newline 
\indent $u = (3 \ 5 \ 2 \ 1 \ 4 \ 7)(6 \ 9)( 11 \ 14 \ 13) (12 \ 10 \ 8 \ 15) = \sigma_1 \sigma_2 \sigma_3 \sigma_4$.  Then we construct \newline
\indent $h_1 =  (3 \ 7) (5 \ 4) (2 \ 1)$, $h_2 = (6 \ 9)$, $h_3 = (11 \ 13)(14)$, and $h_4 = (12 \ 15)(10 \ 8)$, so \newline
\indent $h = (3 \ 7) (5 \ 4) (2 \ 1)(6 \ 9)(11 \ 13)(12 \ 15)(10 \ 8)$.  Then
\[\begin{array}{rl}
uh & =  (3)(5 \ 7)(2 \ 4)(1) \ (6)(9) \ (11)(14 \ 13) \ (12)(10 \ 15)(8)\\
 & = (2 \ 4)(5 \ 7)(10 \ 15)(13 \ 14)\\
\end{array}\]
Thus $h^m = (h^2)^s = \text{id}^s = \text{id}$ and similarily $(uh)^m = $ id, so $h^m = (uh)^m$.
\end{ex}

\begin{prop}\label{m=3} \rm If $u$ is an even permutation of $S_n$, then $\tilde{G}_3(u)$ is not empty in the following cases:\newline
(1)  $u$ is a single cycle of odd length $k = 2c+1$, where $c \in \BN$ \newline
(2)  $u$ is the product of a transposition and a disjoint cycle of even length \newline
(3)  $u$ is the product of a 4-cycle and a disjoint cycle of even length at least 4 \newline
Specifically in each case, there exists an $h$ in $\tilde{G}_m(u)$ such that $h^3 = $ id.

\begin{proof}  Consider Case (1), where $u = (a_1 \ldots a_k)$ is an even permutation.  Then let $b = \left\lfloor \frac{k}{4} \right\rfloor$.  Now construct $h$ to be 
$$\begin{array}{rl}
h = & \left\{ \begin{array}{r}
	(a_k \ a_{k-1} \ a_{k-2})(a_1)(a_2 \ a_{k-3} \ a_3) (a_4)(a_5 \ a_{k-4} \ a_6) (a_7)\ldots(a_{3b-1}) \ \ \ \ \ \ \ \ \ \ \ \ \\
	\text{for } k=1\text{ mod}4 \\
	(a_k \ a_{k-1} \ a_{k-2})(a_1)(a_2 \ a_{k-3} \ a_3) (a_4)(a_5 \ a_{k-4} \ a_6) (a_7)\ldots(a_{3b-1} \ a_{3b+1} \ a_{3b}) \\
	\text{for } k=3\text{ mod}4 \\
	\end{array}
	\right. \\
\end{array}$$
\indent Then $h^3 = $ id and
$$\begin{array}{rl}
uh = & \left\{ \begin{array}{r}
	(a_k)(a_{k-1})(a_{k-2} \ a_1 \ a_2)(a_3)(a_{k-3} \ a_4 \ a_5)(a_6)\ldots(a_{3b} \ a_{3b-2} \ a_{3b-1}) \\
		 \text{for } k=1\text{ mod}4 \\
	(a_k)(a_{k-1})(a_{k-2} \ a_1 \ a_2)(a_3)(a_{k-3} \ a_4 \ a_5)(a_6)\ldots(a_{3b+1})  \ \ \ \ \ \ \ \ \ \ \ \ \\
		 \text{for } k=3\text{ mod}4 \\
	\end{array}
	\right. \\
\end{array}$$
\indent so $(uh)^3 = $ id.  Thus $h^3=(uh)^3$ and $h \in \tilde{G}_3(u)$. \newline

\noindent Now consider Case (2), where $u=\sigma_1\sigma_2 = (a_1 \ a_2)(a_3 \ldots a_{2r})$, and $r>1$ is an integer.  Notice $u$ is an even permutation since $\sigma_1$ is odd and $\sigma_2$ is odd.
Construct $h$ to be 
$$\begin{array}{rl}
h = & \left\{ \begin{array}{ll}
	(a_1 a_2 a_3)(a_4 a_{2r} a_5) (a_6)(a_7 a_{2r-1} a_8)(a_9)\ldots(a_{\frac{3r}{2}+1}) 
		& \text{for } r \text{ even} \\
	(a_1 a_2 a_3)(a_4 a_{2r} a_5) (a_6)(a_7 a_{2r-1} a_8)(a_9)\ldots(a_{\frac{3r-1}{2}} \ a_{\frac{3r+3}{2}} \ a_{\frac{3r+1}{2}}) 
		& \text{for } r \text{ odd} \\
	\end{array}
	\right. \\
\end{array}$$
\indent Then $h^3 = $ id and
$$\begin{array}{rl}
uh = & \left\{ \begin{array}{r}
	(a_1)(a_2 a_4 a_3)(a_5)(a_6 a_7 a_{2r})(a_9 a_{10} a_{2r-1})(a_{11})\ldots(a_{\frac{3r}{2}} \ a_{\frac{3r}{2}+1} \ a_{\frac{3r}{2}+2}) \\
		 \text{for } r \text{ even} \\
	(a_1)(a_2 a_4 a_3)(a_5)(a_6 a_7 a_{2r})(a_9 a_{10} a_{2r-1})(a_{11})\ldots(a_{\frac{3r+3}{2}}) \ \ \ \ \ \ \ \ \ \ \ \ \ \\
		\text{for } r \text{ odd} \\
	\end{array}
	\right. \\
\end{array}$$
\indent so $(uh)^3 = $ id.  Thus $h^3=(uh)^3$ and $h \in \tilde{G}_3(u)$. \newline

\noindent Now consider Case (3), where $u=\sigma_1\sigma_2 = (a_1 a_2 a_3 a_4)(a_5 \ldots a_{2r})$, and $r>3$ is an integer.  Notice $u$ is an even permutation since $\sigma_1$ is odd and $\sigma_2$ is odd.
Construct $h$ to be 
\[
\begin{array}{rl}
h = & \left\{ \begin{array}{r}
	(a_2 a_1 a_5) (a_6)(a_7 a_4  a_3)(a_8 a_{2r} a_9)(a_{2r-1})(a_{10} a_{2r-2} a_{11})(a_{2r-3})\ldots(a_{r+4}) \\
		 \text{for } r \text{ even} \\
	(a_2 a_1 a_5) (a_6)(a_7 a_4  a_3)(a_8 a_{2r} a_9)(a_{2r-1})(a_{10} a_{2r-2} a_{11})(a_{2r-3})\ldots \ \ \ \ \ \ \ \ \\
\ldots(a_{r+3} a_{r+5} a_{r+4})\ \ \ \text{for } r \text{ odd} \\
	\end{array}
	\right. \\
\end{array}
\]
\indent Then $h^3 = $ id and
\[
\begin{array}{rl}
uh = & \left\{ \begin{array}{r}
	(a_2)(a_1 a_6 a_7)(a_4)(a_5 a_3 a_8)(a_9)(a_{10} a_{2r-1} a_{2r}) (a_{11})(a_{12} a_{2r-3} a_{2r-2}) \ldots\\
\ldots (a_{r+4} a_{r+5} a_{r+6}) \ \ \  \text{for } r \text{ even} \\
	(a_2)(a_1 a_6 a_7)(a_4)(a_5 a_3 a_8)(a_9)(a_{10} a_{2r-1} a_{2r}) (a_{11})(a_{12} a_{2r-3} a_{2r-2}) \ldots\\
\ldots (a_{r+5}) \ \ \  \text{for } r \text{ odd} \\
	\end{array}
	\right. \\
\end{array}
\]
\indent so $(uh)^3 = $ id.  Thus $h^3=(uh)^3$ and $h \in \tilde{G}_3(u)$.

\end{proof}
\end{prop}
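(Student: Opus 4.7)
The plan is to prove each of the three cases by \emph{explicit construction}. Specifically, for each $u$ satisfying the hypotheses of (1), (2), or (3), I will exhibit a permutation $h$ with the following property: both $h$ and the product $uh$ are disjoint products of $3$-cycles (together with fixed points). Since any such permutation cubes to the identity, this immediately yields $h^3 = \text{id} = (uh)^3$, so $h \in \tilde{G}_3(u)$ and in particular $\tilde{G}_3(u) \neq \emptyset$.

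In Case (1), where $u = (a_1 \cdots a_k)$ is a single cycle of odd length $k = 2c+1$, set $b = \lfloor k/4 \rfloor$. I would build $h$ by folding the cycle inward from both ends: begin with the $3$-cycle $(a_k\ a_{k-1}\ a_{k-2})$, then add $3$-cycles of the form $(a_{2j}\ a_{k-2j-1}\ a_{2j+1})$ for $j \geq 1$, interspersed with the fixed points $a_1, a_4, a_7, \ldots$, terminating with either a fixed point or a $3$-cycle according to whether $k \equiv 1$ or $3 \pmod{4}$. Left-multiplying by $u$ then shifts each of these $3$-cycles by one index of the original cycle, so $uh$ is again a disjoint product of $3$-cycles and fixed points, as required. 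Cases (2) and (3) follow the same template, but now the construction of $h$ must simultaneously bridge the two disjoint cycles of $u$. In Case (2), $u = (a_1\ a_2)(a_3 \cdots a_{2r})$, and the $3$-cycle $(a_1\ a_2\ a_3)$ serves as the seed joining the transposition to the even cycle; the remaining elements are then folded as in (1), with sub-cases according to the parity of $r$. Case (3) is analogous, with two seed $3$-cycles $(a_2\ a_1\ a_5)$ and $(a_7\ a_4\ a_3)$ tying the $4$-cycle to the even cycle, again branching on the parity of $r$.

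The conceptual content of the argument is slight; the main obstacle is purely combinatorial bookkeeping. For each of the four sub-cases (two parity branches within Case (1), and similarly within (2) and (3)) one must check (i) that the $3$-cycles defining $h$ have pairwise disjoint supports whose union sits inside the support of $u$, and (ii) that upon multiplying by $u$ on the left, adjacent factors telescope in such a way that $uh$ again decomposes into disjoint $3$-cycles and fixed points. Once the explicit formulas for $h$ are written down, both $h^3 = \text{id}$ and $(uh)^3 = \text{id}$ follow by inspection. I do not see a slicker route: the parity constraints ($k \bmod 4$ in (1) and $r \bmod 2$ in (2) and (3)) genuinely force distinct endings for the construction, and an inductive scheme on the cycle length would inherit the same case analysis.
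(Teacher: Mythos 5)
Your proposal is correct and takes essentially the same route as the paper's proof: an explicit $h$ built as a disjoint product of $3$-cycles so that $uh$ is again a disjoint product of $3$-cycles, with the same seed cycles $(a_1\ a_2\ a_3)$ in Case (2) and $(a_2\ a_1\ a_5)$, $(a_7\ a_4\ a_3)$ in Case (3), and the same parity branching on $k \bmod 4$ and on $r$. The only quibble is an indexing slip in your general term for Case (1): to be consistent with the fixed points $a_1, a_4, a_7, \ldots$ and the first folded cycle $(a_2\ a_{k-3}\ a_3)$, the $j$-th folded $3$-cycle should be $(a_{3j-1}\ a_{k-j-2}\ a_{3j})$ rather than $(a_{2j}\ a_{k-2j-1}\ a_{2j+1})$.
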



\begin{ex} \rm  In these examples we explicitly construct the $h \in \tilde{G}_3(u)$ as described in the proof of Proposition \ref{m=3} for each of the cases.  Here we let $S_n = S_{14}$. \newline
{\bf Case (1)}  Let $u = (1 \ 3 \ 7 \ 6 \ 2 \ 4 \ 5) = (a_1 \ a_2 \ a_3 \ a_4 \ a_5 \ a_6 \ a_7)$, so $k = 7 = 3\text{ mod}4$ and $b = 1$, then $h=(a_7 \ a_6 \ a_5)(a_1)(a_2 \ a_4 \ a_3) = (5 \ 4 \ 2)(1)(3 \ 6 \ 7)$,  and \newline
$uh = (1 \ 3 \ 7 \ 6 \ 2 \ 4 \ 5) \ (5 \ 4 \ 2)(1)(3 \ 6 \ 7) = (5)(4)(2 \ 1 \ 3)(6)(7) = (2 \ 1 \ 3)$. \newline
\indent Thus $(uh)^3 = h^3 $ = id. \newline
{\bf Case (2)}  Let $u = (1 \ 2)(3 \ 4 \ 5 \ 6 \ 7 \ 8 \ 9 \ 10)$, so $r=5$.  Then we construct $h$ as \newline
\indent $h \ = \ (1 \ 2 \ 3)(4 \ 10 \ 5)(6)(7 \ 9 \ 8) $ which implies that $ h^3 = $ id, and \newline
\indent  $uh = (1 \ 2)(3 \ 4 \ 5 \ 6 \ 7 \ 8 \ 9 \ 10) \ (1 \ 2 \ 3)(4 \ 10 \ 5)(7 \ 9 \ 8)$ \newline
\indent $\ \ \ \ = (1)(2 \ 4 \ 3)(5)(6 \ 7 \ 10)(8)(9)$, \newline
so $(uh)^3 = $ id as well. \newline
\noindent{\bf Case (3) A)}  Let $u = (1 \ 2 \ 3 \ 4)(5 \ 6 \ 7 \ 8 \ 9 \ 10 \ 11 \ 12)$, so $r=6$.  Then \newline
\indent $h \ = \ (2 \ 1 \ 5)(6)(7 \ 4 \ 3)(8 \ 12 \ 9)(11)(10)$ which implies that $h^3 = $ id, and \newline
\indent $uh = (1 \ 2 \ 3 \ 4)(5 \ 6 \ 7 \ 8 \ 9 \ 10 \ 11 \ 12) \ (2 \ 1 \ 5)(6)(7 \ 4 \ 3)(8 \ 12 \ 9)(11)(10)$ \newline
\indent $\ \ \ \  = (2)(1 \ 6 \ 7)(4)(5 \ 3 \ 8)(9)(10 \ 11 \ 12) $, \newline
so $(uh)^3 = $ id as well. \newline
{\bf Case (3) B)}Let $u = (1 \ 2 \ 3 \ 4)(5 \ 6 \ 7 \ 8 \ 9 \ 10 \ 11 \ 12 \ 13 \ 14)$, so $r=7$.  Then \newline
\indent $h \ = \ (2 \ 1 \ 5)(6)(7 \ 4 \ 3)(8 \ 14 \ 9)(13)(10 \ 12 \ 11) $ which implies that $h^3 = $ id, and \newline
\indent $uh  = (1 \ 2 \ 3 \ 4)(5 \ 6 \ 7 \ 8 \ 9 \ 10 \ 11 \ 12 \ 13 \ 14) \ (2 \ 1 \ 5)(6)(7 \ 4 \ 3)(8 \ 12 \ 9)(13)(10 \ 12 \ 11)$ \newline \indent $\ \ \ \ = (2)(1 \ 6 \ 7)(4)(5 \ 3 \ 8)(9)(10 \ 13 \ 14)(11)(12) $ \newline
so $(uh)^3 = $ id as well.
\end{ex}


We end this section with some open questions.
\begin{qu}\label{3.5.7}  If $\tilde{G}_m^m(u)$ is not empty, does that imply that $\tilde{G}_m^m(u)$ contains the identity element? \end{qu}
\begin{qu} If Questions \ref{3.5.7} is not true in general, is it at least true when $m$ is an odd positive integer? \end{qu}



\section{Indicators of $D(S_n)$ for some $n$} 

The $m^{\text{th}}$ Frobenius-Schur indicators of a character from each irreducible I-equivalent character class of $D(S_n)$ are presented in this section for $n \leq 6$.  Our main result is stated in Theorem 4.0 
 below.  It indicates that Scharf's theorem, proving all higher indicators of $S_n$ are non-negative integers, may be plausible to extend to $D(S_n)$. \newline

\noindent{\bf Theorem 4.0} \it \ 
All higher Frobenius-Schur indicators for $D(S_n)$ for $3 \leq n \leq 10$ are non-negative integers.  \rm
\begin{proof}  See the indicator value Tables 4.1, 4.2, 4.3, 4.4, for $n \leq 6$ and all tables in Apendix B for $7 \leq n \leq 10$.
\end{proof}
\setcounter{thm}{1}
All indicators of $D(S_n)$ were calculated in GAP by using a series of functions equivalent to the formula from Proposition \ref{mine}
 \begin{equation} \label{calc}\nu_m(\chi_{i.j}) = \frac{1}{|C_{S_n}(u_i)|} \, \sum_{y\in \tilde{G}_m^m(u_i)} \Gamma_m(u_i, y)\eta_j(y) \end{equation}\normalsize
 where $u_i$ is a representative of a conjugacy class in $S_n$, $\eta_j$ is an irreducible character of $C_{S_n}(u_i)$ the centralizer of $u_i$ in $S_n$, and $\chi_{i.j}$ is the irreducible character of $D(S_n)$ induced up from $\eta_j$ as described in Lemma \ref{repsofDG}. Recall also that $\tilde{G}^m_m(u)$ and $\Gamma_m(u_i, y)$ were given in Definitions \ref{Gmm} and \ref{gamma}.  Character tables containing all the characters $\eta_j$ of a given centralizer are provided in Appendix A.  

For the remainder of this paper, we use $()$ to notate the identity element of $S_n$.

\subsection{}
\bf Indicators of $D(S_3)$ \rm   

In this section, we give details leading up to the use of Equation \ref{calc} to find all the irreducible I-equivalent character classes of $D(S_3)$ and their indicator values.  To use Equation \ref{calc}, we first choose conjugacy class representatives of $S_3$ so we can look at the character tables of their centralizers.

The conjugacy class representatives $u_i$ used in these calculations and their centralizers $C_{S_3}(u_i)$ are:
$$
\begin{array} {ll}
\begin{array}{r| c }
i & u_i \\ \hline 
1 &  ()  \\
2 &  (1,2) \\
3 & (1,2,3) \\ \hline 
\end{array} &
	\begin{array}{l} \\
	C_{S_3}(u_{1}) = S_3 \text{ is not abelian.}\\
	C_{S_3}(u_{2}) = \langle (1,2)\rangle \cong C_2 \text{ is a cyclic abelian group.} \\
	C_{S_3}(u_{3}) = \langle (1,2,3) \rangle \cong C_3 \text{ is a cyclic abelian group.} \\
	\end{array}
\end{array}
$$

Next we need to determine which of the higher indicators we are interested in computing.  The exponent of $S_3$ is 6, so according to Theorem \ref{divisors of exp}, as long as the indicator $\nu_m(\chi)$ is an integer when $m$ is a divisior of 6, that is when $m = 1,2,3,\text{ and }6$, then all higher indicators will be integer valued and we will know exactly what they all are.

So we need to look at the sets $\tilde{G}_m^m$ that we sum over, and the corresponding ``coefficients" $\Gamma_m(u,y)$ for each element $y$ in $\tilde{G}_m^m$.  Below we list the sets $\tilde{G}_m(u_i)$ which give a better understanding of how each element in $\tilde{G}_m^m(u_i)$ is found, and how the value for $\Gamma_m(u_i, y)$ is found for each $y$ in $\tilde{G}_m^m(u_i)$.  Next to each set $\tilde{G}_m(u_i)$, we list the set $\tilde{G}_m^m(u_i)$ with the corresponding value of $\Gamma_m(u,y)$ paired with and preceding each element in $\tilde{G}_m^m(u_i)$.  So if $\tilde{G}_m^m(u_i) = \{y,...,z\}$ then we will display this set by writing ``$\tilde{G}_m^m(u_i)$ will consist of $\{ [\Gamma_m(u,y),y],...,[\Gamma_m(u,z),z]\}$."  Thus we have: 

{\bf(1)} For $u_1 = ()$:  \newline
$\tilde{G}_m(u_1) = S_3 \text{ for all }m$, so \hfill $\tilde{G}_1^1(u_1) \text{ consists of } \{ [ 1, () ], [3, (1,2)], [2,(1,2,3)]\}$.

\hspace{2.35in} $\tilde{G}_2^2(u_1) \text{ consists of } \{ [ 4, () ],  [2,(1,2,3)]\}$. 

\hspace{2.35in} $\tilde{G}_3^3(u_1) \text{ consists of } \{ [ 3, () ], [3, (1,2)]\}$, and

\hspace{2.35in} $\tilde{G}_6^6(u_1) \text{ consists of } \{ [ 6, () ]\}$. 

{\bf(2)} For $u_2=(1,2)$: \newline
$\tilde{G}_1(u_2) = \emptyset $, \hspace{1.63in} $ \tilde{G}_1^1(u_2) = \emptyset$. \newline
$\tilde{G}_2(u_2) = \{ (), (1,2)\} $, \hspace{0.99in} $\tilde{G}_2^2(u_2) \text{ consists of }\{ [ 2, () ]\}$. \newline
$\tilde{G}_3(u_2) = \emptyset $,  \hspace{1.63in} $\tilde{G}_3^3(u_2)=\emptyset $. \newline
$\tilde{G}_6(u_2) = S_3 $,   \hspace{1.55in} $\tilde{G}_6^6(u_2) \text{ consists of } \{ [ 6, () ]\}$. 

{\bf(3)} For $u_3 = (1,2,3)$: \newline
$\tilde{G}_1(u_3) = \emptyset $, \hspace{1.63in} $\tilde{G}_1^1(u_3)=\emptyset$.\newline
$\tilde{G}_2(u_3) = \{ (2,3), (1,2), (1,3) \} $, \hspace{.32in} $\tilde{G}_2^2(u_3) \text{ consists of } \{ [ 3, () ] \}$.\newline
$\tilde{G}_3(u_3) = \{ (), (1,2,3), (1,3,2) \} $,\hspace{.3in} $\tilde{G}_3^3(u_3) \text{ consists of } \{ [ 3, () ] \}.$\newline
$\tilde{G}_6(u_3) = S_3 $, \hspace{1.56in} $\tilde{G}_6^6(u_3) \text{ consists of } \{ [ 6, () ] \}$.

As we can see, all the $\tilde{G}_m^m(u_2)$ are either empty or contain only the identity element, so since $C_{S_3}(u_2)$ is abelian,  Lemma \ref{I-equiv} tells us that all characters induced from $C_{S_3}(u_2)$ will be I-equivalent.  Similarily all characters induced from $C_{S_3}(u_3)$ will be I-equivalent.

Since we know $\eta_{2.1}$ is I-equivalent to $\eta_{2.2}$ by Lemma \ref{I-equiv}, and we know $\eta_{3.1}$, $\eta_{3.2}$, and $\eta_{3.3}$ are all three I-equivalent by Lemma \ref{I-equiv}, we do not really need the full character tables of $C_{S_3}(u_2)$ or $C_{S_3}(u_3)$.  We only need to know how many irreducible characters there are for each of these centralizers.

Thus using the coefficients $\Gamma$, the elements in $\tilde{G}_m^m(u_i)$, and the character tables of $C_{S_3}(u_i)$ with Equations \ref{calc}, we find:
\begin{prop} The irreducible characters of $D(S_3)$ have the following 4 distinct I-equivalent character classes:
$$[\ \chi_{1.1}\ ],\ \ [\ \chi_{1.2},\chi_{3.1},\chi_{3.2},\chi_{3.3}\ ],\ \ [\ \chi_{1.3}\ ],\ \ [\ \chi_{2.1},\chi_{2.2}\ ].$$ \end{prop}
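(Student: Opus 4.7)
The plan is to directly verify the four claimed I-equivalence classes by computing the higher Frobenius-Schur indicators of a representative of each. Since $\mathrm{exp}(S_3) = 6$, Theorem \ref{divisors of exp} says it suffices to compute $\nu_m$ for $m \in \{1,2,3,6\}$ (once we have confirmed these are integers, which they will be by direct calculation). So the characters will be separated — or shown to coincide — by comparing only four indicator values per class.

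First, I would enumerate the eight irreducible characters of $D(S_3)$: from Lemma \ref{repsofDG} and the centralizer data listed above, these are $\chi_{1.1}, \chi_{1.2}, \chi_{1.3}$ (induced from the three irreducible characters of $C_{S_3}(u_1) = S_3$), $\chi_{2.1}, \chi_{2.2}$ (induced from the two linear characters of $C_{S_3}(u_2) \cong C_2$), and $\chi_{3.1}, \chi_{3.2}, \chi_{3.3}$ (induced from the three linear characters of $C_{S_3}(u_3) \cong C_3$). Next, I would apply two preliminary reductions. For the characters induced from $u_1 = ()$, Remark \ref{triv induced} gives $\nu_m(\chi_{1.j}) = \nu_m(\eta_j)$, where $\eta_j$ is the corresponding irreducible character of $S_3$; these classical indicators are routine to compute for the trivial, sign, and standard $2$-dimensional characters of $S_3$. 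For the other two centralizers, the sets $\tilde{G}_m^m(u_2)$ and $\tilde{G}_m^m(u_3)$ listed in the preamble to the proposition are each either empty or equal to $\{()\}$, and both centralizers are abelian, so Lemma \ref{I-equiv} collapses the $\chi_{2.j}$ into one I-equivalence class and the $\chi_{3.j}$ into another. Thus at this stage it remains only to assemble the five indicator rows (trivial, sign, $2$-dim, $\chi_{2.\bullet}$, $\chi_{3.\bullet}$).

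The indicators for the $\chi_{2.\bullet}$ and $\chi_{3.\bullet}$ classes are then a direct plug-in to Proposition \ref{mine}, using the data $[\Gamma_m(u_i,y),y]$ tabulated just above the proposition statement together with $\eta_j(()) = 1$. For example, $\nu_6(\chi_{2.j}) = \tfrac{1}{2}\cdot 6 = 3$ and $\nu_6(\chi_{3.j}) = \tfrac{1}{3}\cdot 6 = 2$, while $\nu_3(\chi_{3.j}) = \tfrac{1}{3}\cdot 3 = 1$ but $\nu_3(\chi_{2.j}) = 0$ (empty sum). Comparing these two rows with the three rows coming from the trivial, sign, and $2$-dimensional characters of $S_3$ will exhibit precisely the claimed groupings: the $2$-dimensional character of $S_3$ produces the indicator sequence $(0,1,1,2)$, which agrees exactly with that of the $\chi_{3.j}$'s, while the trivial character gives $(1,1,1,1)$, the sign character gives $(0,1,0,1)$, and the $\chi_{2.j}$'s give $(0,1,0,3)$, all pairwise distinct.

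Thus the main work is bookkeeping rather than any real obstacle — the only subtle point is matching up the numerical indexing $\chi_{1.1},\chi_{1.2},\chi_{1.3}$ with the trivial, standard, and sign characters of $S_3$ in the correct order, which is dictated by the author's convention for ordering the irreducible characters of $S_3$ in Appendix A. Once that indexing is fixed, comparing the five indicator rows immediately yields the four I-equivalence classes in the statement, and confirms that all indicator values are non-negative integers, consistent with Theorem~4.0.
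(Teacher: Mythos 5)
Your proposal is correct and follows essentially the same route as the paper: the paper likewise reduces to $m\in\{1,2,3,6\}$ via Theorem \ref{divisors of exp}, invokes Lemma \ref{I-equiv} to collapse the $\chi_{2.\bullet}$ and $\chi_{3.\bullet}$ families (since $\tilde{G}_m^m(u_2)$ and $\tilde{G}_m^m(u_3)$ are each empty or $\{()\}$ and the centralizers are abelian), and then evaluates the formula of Proposition \ref{mine} with the tabulated $[\Gamma_m(u_i,y),y]$ data and the character tables in Appendix A. Your only cosmetic deviation is handling the $\chi_{1.j}$ via Remark \ref{triv induced} and the classical $S_3$ indicators rather than plugging $u_1=()$ into the general formula, but the Remark shows these are the same computation, and your resulting indicator rows agree with Table 4.1.
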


The indicator values of these irreducible I-equivalent characters are displayed in Table 4.1 below.
\begin{table}[ht]
\caption{$D(S_3)$ indicators: (exponent 6)}
$ \begin{array}{r|cccc} \hline 
m =  & 1 
 & 2 
 & 3 
 & 6 
\rule[-7pt]{0pt}{20pt} \\ \hline 
\nu_m(\chi_{1.1}) & 0 & 1 & 0 & 1 \rule[0pt]{0pt}{13pt} \\ 
\nu_m(\chi_{1.2}) & 0 & 1 & 1 & 2 \\ 
\nu_m(\chi_{1.3}) & 1 & 1 & 1 & 1 \\ 
\nu_m(\chi_{2.1}) & 0 & 1 & 0 & 3 \rule[-7pt]{0pt}{5pt} \\ 
\hline 
\end{array} $
\label{table:S3} 
\end{table}

\subsection{}
\bf Indicators of $D(S_4)$ \rm   

In this section, we give details leading up to the use of Equation \ref{calc} to find all the irreducible I-equivalent character classes of $D(S_4)$ and their indicator values.  Recall Equation \ref{calc}:
 $$\nu_m(\chi_{i.j}) = \frac{1}{|C_{S_n}(u_i)|} \, \sum_{y\in \tilde{G}_m^m(u_i)} \Gamma_m(u_i, y)\eta_j(y) $$\normalsize
 where $u_i$ is a representative of a conjugacy class in $S_n$, $\eta_j$ is an irreducible character of $C_{S_n}(u_i)$ the centralizer of $u_i$ in $S_n$, and $\chi_{i.j}$ is the irreducible character of $D(S_n)$ induced up from $\eta_j$ as described in Lemma \ref{repsofDG}. $\tilde{G}^m_m(u)$ and $\Gamma_m(u_i, y)$ were given in Definitions \ref{Gmm} and \ref{gamma}.

To begin, we first choose conjugacy class representatives of $S_4$ so we can look at the character tables of their centralizers. 

The conjugacy class representatives $u_i$ used in these calculations and their centralizers $C_{S_4}(u_i)$ are:
$$
\begin{array} {ll}
\begin{array}{r| c }
i & u_i \\ \hline 
1 &  ()  \\
2 &  (1,2) \\
3 & (1,2)(3,4) \\
4 & (1,2,3) \\
5 & (1,2,3,4) \\ \hline 
\end{array} &
	\begin{array}{l} \\
	C_{S_4}(u_{1}) = S_4 \text{ is not abelian.}\\
	C_{S_4}(u_{2}) = \langle (1,2), (3,4)\rangle \cong C_2 \times C_2 \text{ is abelian.} \\
	C_{S_4}(u_{3}) = \langle (1,2), (1,3)(2,4), (3,4)\rangle \cong D_8 \text{ is not abelian.} \\
	C_{S_4}(u_{4}) = \langle (1,2,3) \rangle \cong C_3 \text{ is cyclic abelian.} \\
	C_{S_4}(u_{5}) = \langle (1,2,3,4)\rangle \cong C_4 \text{ is abelian.} \\
	\end{array}
\end{array}
$$

\noindent{\bf(1)} First we consider when $u_1 = ()$.

The sets $\tilde{G}_m^m(u_1)$ that we sum over, and the corresponding coefficients $\Gamma_m(u_1,y)$ for each element $y$ in $\tilde{G}_m^m$ are listed below, with the corresponding value of $\Gamma_m(u_1,y)$ preceding each element in $\tilde{G}_m^m(u_1)$. 

 Since $\tilde{G}_m(u_1) = S_4$ for all $m$, we have: \newline
$\tilde{G}_1^1(u_1)$ consists of $\{ [ 1, () ], [ 6, (1,2) ], [ 3, (1,2)(3,4) ], [ 8, (1,2,3) ], [ 6, (1,2,3,4) ] \}$; \newline
$\tilde{G}_2^2(u_1)$ consists of $\{ [ 10, () ], [ 6, (1,2)(3,4) ], [ 8, (1,2,3) ]\}$; \newline
$\tilde{G}_3^3(u_1)$ consists of $\{ [ 9, () ], [ 6, (1,2) ], [ 3, (1,2)(3,4) ], [ 6, (1,2,3,4) ] \}$; \newline
$\tilde{G}_4^4(u_1)$ consists of $\{ [ 16, () ],  [ 8, (1,2,3) ]\}$;\newline
$\tilde{G}_6^6(u_1)$ consists of $\{ [ 18, () ], [ 6, (1,2)(3,4) ] \}$;\newline
$\tilde{G}_{12}^{12}(u_1)$ consists of $\{ [ 24, () ] \}$. \newline

For the rest of the $u=u_i$, we will list the sets $\tilde{G}_m(u)$ which give a better understanding of how each element in $\tilde{G}_m^m(u)$ and values for $\Gamma_m$ were found.  Then we list the sets $\tilde{G}_m^m(u)$ with the corresponding value of $\Gamma_m(u,y)$ preceding each element in $\tilde{G}_m^m(u)$ in the same fashion as in Section 4.1.

In the pages that follow we will see that for $i=2,4,$ and 5 all $\tilde{G}_m^m(u_i)$ are either empty or only contain the identity element.  So Lemma \ref{I-equiv} applies to $C_{S_4}(u_i)$, for $i=2,4,$ and 5 since each of these centralizer groups is abelian.  Thus we do not really need the character tables of these centralizers, we only need to know that there are 4 irreducible characters of $C_{S_4}(u_2)$, 3 irreducible characters of $C_{S_4}(u_4)$, and 4 irreducible characters of $C_{S_4}(u_5)$.\newline

\noindent {\bf(2)} Now consider when $u_2 = (1,2)$. 
$$\begin{array}{ll}
\ \tilde{G}_1(u_2) = \emptyset, & \ \tilde{G}_1^1(u_2) = \emptyset.  \\
\ \tilde{G}_2(u_2) = \{ (), (3,4), (1,2), (1,2)(3,4)\}, & \ 
	\tilde{G}_2^2(u_2) \text{ consists of }\{ [ 4, () ] \}. \\
\ \tilde{G}_3(u_2) = \emptyset, & \ \tilde{G}_3^3(u_2) = \emptyset.\\
\begin{array}{l}
\tilde{G}_4(u_2) = \{ (), (3,4), (1,2), (1,2)(3,4), \\ \ \ \ (1,3)(2,4), (1,3,2,4), (1,4,2,3),\\ \ \ \  (1,4)(2,3) \},
\end{array} & \begin{array}{l}
\tilde{G}_4^4(u_2) \text{ consists of }\{ [ 8, () ] \}. \\ \\ \\ \end{array} \\
\begin{array}{l}
\tilde{G}_6(u_2) = \{ (), (3,4), (2,3), (2,4), (1,2),  \\ \ \ \ (1,2)(3,4), (1,2,3), (1,2,4), 
  (1,3,2),  \\ \ \ \ (1,3), (1,4,2), (1,4) \}, 
\end{array} & \begin{array}{l}
\tilde{G}_6^6(u_2) \text{ consists of }\{ [ 12, () ] \}. \\ \\ \\ \end{array} \\
\tilde{G}_{12}(u_2) = S_4, & \ \tilde{G}_{12}^{12}(u_2) \text{ consists of }\{ [ 24, () ] \}.
\end{array}$$
\newpage

\noindent {\bf(3)} Now consider when $u_3 = (1,2)(3,4)$.

$$\begin{array}{ll}
\ \tilde{G}_1(u_3) = \emptyset, & \ \tilde{G}_1^1((u_{3}) = \emptyset  \\
\begin{array}{l}
 \tilde{G}_2(u_3) = \{ (), (3,4), (1,2), (1,2)(3,4), \\ \ \ \  (1,3)(2,4), (1,4)(2,3), 
  (1,3,2,4), \\ \ \ \ (1,4,2,3)\}, \end{array} & \begin{array}{l} 
	\tilde{G}_2^2(u_3) \text{ consists of }\{  [ 6, () ], \\ \ \ \ \ \ \ \ \ \ [ 2, (1,2)(3,4) ] \}. \\ \end{array} \\
\begin{array}{l}
 \tilde{G}_3(u_3) = \{ (2,3,4), (2,4,3), (1,2,3), \\ \ \ \ (1,2,4), (1,3,2), (1,3,4), \\ \ \ \
  (1,4,2), (1,4,3) \}, \end{array} & \begin{array}{l} \tilde{G}_3^3(u_3) \text{ consists of } \{ [ 8, () ] \}.\\ \\ \end{array} \\ 
\begin{array}{l}
\tilde{G}_4(u_3) = \{ (), (3,4), (2,3), (2,4), \\ \ \ \ (1,2), (1,2)(3,4), (1,2,3,4), \\ \ \ \
  (1,2,4,3), (1,3,4,2), (1,3), \\ \ \ \ (1,3)(2,4), (1,3,2,4), (1,4,3,2), \\ \ \ \ (1,4),   (1,4,2,3), (1,4)(2,3) \},
\end{array} & \begin{array}{l}
\tilde{G}_4^4(u_3) \text{ consists of }\{  [ 16, () ] \}. \\ \\ \\ \\ \\ \end{array} \\
\begin{array}{l}
\tilde{G}_6(u_3) = \{(), (3,4), (2,3,4), (2,4,3), \\ \ \ \ (1,2), (1,2)(3,4), (1,2,3), \\ \ \ \
  (1,2,4), (1,3,2), (1,3,4), \\ \ \ \ (1,3)(2,4), (1,4,2), (1,4,3), \\ \ \ \ (1,4)(2,3), 
  (1,3,2,4), (1,4,2,3)\}, 
\end{array} & \begin{array}{l}
\tilde{G}_6^6(u_3) \text{ consists of }\{ [ 14, () ], \\ \ \ \ \ \ \ \ \ \ [ 2, (1,2)(3,4) ] \}. \\ \\ \\ \\ \end{array} \\
\tilde{G}_{12}(u_3) = S_4, & \ \tilde{G}_{12}^{12}(u_3) \text{ consists of }\{ [ 24, () ] \}.
\end{array}$$


\noindent {\bf(4)} Now consider when $u_4 = (1,2,3)$.

$$\begin{array}{ll}
\ \tilde{G}_1(u_4) = \emptyset, & \ \tilde{G}_1^1((u_{4}) = \emptyset  \\

\begin{array}{l}
 \tilde{G}_2(u_4) = \{ (2,3), (1,2), (1,3) \}, \end{array} & \begin{array}{l} 
	\tilde{G}_2^2(u_4) \text{ consists of }\{  [ 3, () ] \}.  \end{array} \\

\begin{array}{l}
 \tilde{G}_3(u_4) = \{ (), (2,4,3), (1,2,3), \\ \ \ \ (1,3,2), (1,3,4), (1,4,2) \}, \end{array} & \begin{array}{l} \tilde{G}_3^3(u_4) \text{ consists of } \{ [ 6, () ] \}.\\ \\ \end{array} \\ 

\begin{array}{l}
\tilde{G}_4(u_4) = \{ (3,4), (2,3), (2,4), (1,2), \\ \ \ \ (1,2,3,4), (1,2,4,3), (1,3,4,2), \\ \ \ \
  (1,3), (1,3,2,4), (1,4,3,2), \\ \ \ \ (1,4), (1,4,2,3) \},
\end{array} & \begin{array}{l}
\tilde{G}_4^4(u_4) \text{ consists of }\{  [ 12, () ] \}. \\ \\ \\ \\ \end{array} \\

\begin{array}{l}
\tilde{G}_6(u_4) = \{(), (2,3), (2,3,4), (2,4,3), \\ \ \ \ (1,2), (1,2)(3,4), (1,2,3), 
  (1,2,4), \\ \ \ \ (1,3,2), (1,3), (1,3,4), (1,3)(2,4),\\ \ \ \ (1,4,2), (1,4,3), (1,4)(2,3) \}, 
\end{array} & \begin{array}{l}
\tilde{G}_6^6(u_4) \text{ consists of }\{ [ 15, () ] \}. \\ \\ \\ \\ \end{array} \\

\tilde{G}_{12}(u_4) = S_4, & \ \tilde{G}_{12}^{12}(u_4) \text{ consists of }\{ [ 24, () ] \}.
\end{array}$$
 \newline

\newpage

\noindent {\bf(5)} Finally, consider when $u_5 = (1,2,3,4)$.

$$\begin{array}{ll}
\ \tilde{G}_1(u_5) = \emptyset, & \ \tilde{G}_1^1((u_{5}) = \emptyset  \\

\begin{array}{l}
 \tilde{G}_2(u_5) = \{ (2,4), (1,2)(3,4), (1,3), \\ \ \ \ (1,4)(2,3) \}, \end{array} & \begin{array}{l} 
	\tilde{G}_2^2(u_5) \text{ consists of }\{  [ 4, () ] \}.  \\ \\ \end{array} \\

\ \tilde{G}_3(u_5) = \emptyset, & \ \tilde{G}_3^3(u_5) = \emptyset. \\ 

\begin{array}{l}
\tilde{G}_4(u_5) = \{ (), (2,4), (1,2)(3,4), (1,2,3,4), \\ \ \ \ (1,3), (1,3)(2,4), 
  (1,4,3,2), (1,4)(2,3) \},
\end{array} & \begin{array}{l}
\tilde{G}_4^4(u_5) \text{ consists of }\{  [ 8, () ] \}. \\ \\  \end{array} \\

\begin{array}{l}
\tilde{G}_6(u_5) = \{(3,4), (2,3), (2,4,3), (2,4), \\ \ \ \ (1,2), (1,2)(3,4), (1,3,2), 
  (1,3), \\ \ \ \ (1,4,2), (1,4,3), (1,4), (1,4)(2,3) \}, 
\end{array} & \begin{array}{l}
\tilde{G}_6^6(u_5) \text{ consists of }\{ [ 12, () ] \}. \\ \\ \\  \end{array} \\

\tilde{G}_{12}(u_5) = S_4, & \ \tilde{G}_{12}^{12}(u_5) \text{ consists of }\{ [ 24, () ] \}.
\end{array}$$


Thus using the coeffieicents $\Gamma$, the elements in $\tilde{G}_m^m(u_i)$, and the character tables of $C_{S_4}(u_i)$ with Equations \ref{calc}, we find:
\begin{prop} The irreducible characters of $D(S_4)$ have the following 7 distinct I-equivalent classes:\newline
$ [\ \chi_{1.1}\ ], \ [\ \chi_{1.2}, \chi_{1.4}, \chi_{3.1}, \chi_{3.2}, \chi_{3.3}, \chi_{3.4}\ ], \ [\ \chi_{1.3}\ ], \ [\ \chi_{1.5}\ ],$ \newline 
$[\ \chi_{2.1}, \chi_{2.2},\chi_{2.3}, \chi_{2.4}, \chi_{5.1}, \chi_{5.2}, \chi_{5.3}, \chi_{5.4}\ ], \ [\ \chi_{3.5}\ ],\
[\ \chi_{4.1}, \chi_{4.2}, \chi_{4.3}\ ]$.
\end{prop}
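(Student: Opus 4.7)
The plan is to apply Equation~\ref{calc} character by character. Since $\exp(S_4)=12$, Theorem~\ref{divisors of exp} reduces the problem to computing $\nu_m(\chi_{i.j})$ for $m\in\{1,2,3,4,6,12\}$, provided these values come out to be integers; the I-equivalence classes are then the fibers of the tuple $(\nu_1,\nu_2,\nu_3,\nu_4,\nu_6,\nu_{12})$. All the data needed -- the sets $\tilde{G}_m^m(u_i)$ together with the multiplicities $\Gamma_m(u_i,y)$ for each of the five conjugacy-class representatives -- has been laid out in the preceding paragraphs, so the remaining work is a finite calculation.

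For the three abelian centralizers $C_{S_4}(u_2)\cong C_2\times C_2$, $C_{S_4}(u_4)\cong C_3$, and $C_{S_4}(u_5)\cong C_4$, each tabulated $\tilde{G}_m^m(u_i)$ is either empty or the single element $\{()\}$. Lemma~\ref{I-equiv} therefore immediately collapses the sum in Equation~\ref{calc} to $\nu_m(\chi_{i.j})=\Gamma_m(u_i,())/|C_{S_4}(u_i)|$, independent of $j$, so the $\chi_{2.j}$ form one I-equivalence block, the $\chi_{4.j}$ another, and the $\chi_{5.j}$ a third. Direct comparison of $\Gamma_m(u_2,())$ with $\Gamma_m(u_5,())$ for each divisor $m\mid 12$ shows these two tuples actually agree (both centralizers have order $4$), merging them into the predicted eight-element mixed class. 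For the non-abelian centralizers $C_{S_4}(u_1)=S_4$ and $C_{S_4}(u_3)\cong D_8$, Lemma~\ref{I-equiv} does not apply, and one must substitute the character tables of $S_4$ and $D_8$ from Appendix~A into Equation~\ref{calc}; Remark~\ref{triv induced} provides a useful sanity check on the $i=1$ row, since $u_1=()$ forces $\nu_m(\chi_{1.j})=\nu_m(\eta_j)$, the classical higher indicator of the $S_4$-character $\eta_j$.

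The main obstacle I anticipate is verifying the mixed class $[\chi_{1.2},\chi_{1.4},\chi_{3.1},\chi_{3.2},\chi_{3.3},\chi_{3.4}]$, which combines characters induced from two different non-abelian centralizers. No structural lemma from Section~3 predicts this coincidence, so it must be confirmed by comparing the numerically computed indicator tuples for each of these six characters and checking they agree in all six relevant values of $m$. Once this verification is complete, the partition of the irreducible characters of $D(S_4)$ into the stated seven I-equivalence classes follows immediately from the tabulated data, and non-negativity/integrality of every entry is observed in passing so that Theorem~\ref{divisors of exp} applies to extend the results to all $\nu_m$.
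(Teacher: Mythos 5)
Your proposal is correct and follows essentially the same route as the paper: the paper's proof of this proposition is precisely the finite computation of Equation (4.0.1) from the tabulated $\tilde{G}_m^m(u_i)$, the coefficients $\Gamma_m(u_i,y)$, and the centralizer character tables, with Lemma 3.3.7 handling the abelian centralizers $u_2, u_4, u_5$ and direct evaluation handling $S_4$ and $D_8$. Your observations — that the $\Gamma_m(u_2,())$ and $\Gamma_m(u_5,())$ data coincide to merge those two blocks, and that the mixed class $[\chi_{1.2},\chi_{1.4},\chi_{3.1},\ldots,\chi_{3.4}]$ has no structural explanation and must simply be checked numerically — match what the paper does (and the paper likewise offers no structural reason for that coincidence beyond the smallness of $S_4$).
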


The indicator values of these irreducible I-equivalent characters are displayed in Table 4.2 below.
\begin{table}[ht]
\caption{$D(S_4)$ indicators: (exponent 12)}
$ 
\begin{array}{r|cccccc} \hline 
m =  & 1 
 & 2 
 & 3 
 & 4 
 & 6 
 & 12 
\rule[-7pt]{0pt}{20pt} \\ \hline 
\nu_m(\chi_{1.1}) & 0 & 1 & 0 & 1 & 1 & 1 \rule[0pt]{0pt}{13pt} \\ 
\nu_m(\chi_{1.2}) & 0 & 1 & 1 & 2 & 2 & 3 \\ 
\nu_m(\chi_{1.3}) & 0 & 1 & 1 & 1 & 2 & 2 \\ 
\nu_m(\chi_{1.5}) & 1 & 1 & 1 & 1 & 1 & 1 \\ 
\nu_m(\chi_{2.1}) & 0 & 1 & 0 & 2 & 3 & 6 \\ 
\nu_m(\chi_{3.5}) & 0 & 1 & 2 & 4 & 3 & 6 \\ 
\nu_m(\chi_{4.1}) & 0 & 1 & 2 & 4 & 5 & 8 \rule[-7pt]{0pt}{5pt} \\ 
\hline 
\end{array} 
$
\label{table:S4} 
\end{table}

\subsection{}
\bf Indicators of $D(S_5)$ \rm   

In this section, we give details leading up to the use of Equation \ref{calc} to find all the irreducible I-equivalent character classes of $D(S_5)$ and their indicator values.  Recall Equation \ref{calc}: 
 $$\nu_m(\chi_{i.j}) = \frac{1}{|C_{S_n}(u_i)|} \, \sum_{y\in \tilde{G}_m^m(u_i)} \Gamma_m(u_i, y)\eta_j(y) $$\normalsize
 where $u_i$ is a representative of a conjugacy class in $S_n$, $\eta_j$ is an irreducible character of $C_{S_n}(u_i)$ the centralizer of $u_i$ in $S_n$, and $\chi_{i.j}$ is the irreducible character of $D(S_n)$ induced up from $\eta_j$ as described in Lemma \ref{repsofDG}. $\tilde{G}^m_m(u)$ and $\Gamma_m(u_i, y)$ were given in Definitions \ref{Gmm} and \ref{gamma}.

To begin, we first choose conjugacy class representatives of $S_5$ so we can look at the character tables of their centralizers.  

The conjugacy class representatives $u_i$ used in these calculations and their centralizers $C_{S_5}(u_i)$ are:
$$
\begin{array} {ll}
\begin{array}{r| c }
i & u_i \\ \hline 
1 &  ()  \\
2 &  (1,2) \\
3 & (1,2)(3,4) \\
4 & (1,2,3) \\
5 & (1,2,3)(4,5) \\ 
6 & (1,2,3,4) \\
7 & (1,2,3,4,5) \\ \hline 
\end{array} &
	\begin{array}{l} \\
	C_{S_5}(u_{1}) = S_5 \text{ is not abelian.}\\
	C_{S_5}(u_{2}) = \langle (1,2), (3,5), (4,5)\rangle \cong  D_{12} \text{ is not abelian.} \\
	C_{S_5}(u_{3}) = \langle (1,2), (1,3)(2,4), (3,4) \rangle \cong D_8 \text{ is not abelian.} \\
	C_{S_5}(u_{4}) = \langle (1,2,3), (4,5) \rangle \cong C_6 \text{ is cyclic abelian.} \\
	C_{S_5}(u_{5}) = \langle (1,2,3), (4,5)\rangle \cong C_6 \text{ is cyclic abelian.} \\
	C_{S_5}(u_{6}) = \langle (1,2,3,4)\rangle \cong C_4 \text{ is cyclic abelian.} \\
	C_{S_5}(u_{7}) = \langle (1,2,3,4,5)\rangle \cong C_5 \text{ is cyclic abelian.} \\
	\end{array}
\end{array}
$$

{\bf(1)} When $u_1 = ()$:

The sets $\tilde{G}_m^m(u_1)$ that we sum over, and the corresponding coefficients $\Gamma_m(u_1,y)$ for each element $y$ in $\tilde{G}_m^m$ are listed below, with the corresponding value of $\Gamma_m(u_1,y)$ preceding each element in $\tilde{G}_m^m(u_1)$, in the same fashion as in Section 4.1.

 Since $\tilde{G}_m(u_1) = S_5$ for all $m$, we have: \newline
$\tilde{G}_1^1(u_{1})$ consists of $\{ [ 1, () ], [ 10, (1,2) ], [ 15, (1,2)(3,4) ], [ 20, (1,2,3) ],$\newline
\indent $ [ 20, (1,2,3)(4,5) ], [ 30, (1,2,3,4) ], [ 24, (1,2,3,4,5) ] \}$; \newline 
$\tilde{G}_2^2(u_{1})$ consists of $\{ [ 26, () ], [ 40, (1,2,3) ], [ 30, (1,2)(3,4) ], [ 24, (1,2,3,4,5) ] \}$; \newline 
$\tilde{G}_3^3(u_{1})$ consists of $\{ [ 21, () ], [ 30, (1,2) ], [ 15, (1,2)(3,4) ], [ 30, (1,2,3,4) ],$\newline
\indent $ [ 24, (1,2,3,4,5) ] \}$; \newline 
$\tilde{G}_4^4(u_{1})$ consists of $\{ [ 56, () ], [ 40, (1,2,3) ], [ 24, (1,2,3,4,5) ] \}$; \newline 
$\tilde{G}_5^5(u_{1})$ consists of $\{ [ 25, () ], [ 10, (1,2) ], [ 15, (1,2)(3,4) ], [ 20, (1,2,3) ], $\newline
\indent $[ 20, (1,2,3)(4,5) ], [ 30, (1,2,3,4) ] \}$; \newline 
$\tilde{G}_6^6(u_{1})$ consists of $\{ [ 66, () ], [ 30, (1,2)(3,4) ],  [ 24, (1,2,3,4,5) ] \}$; \newline 
$\tilde{G}_{10}^{10}(u_{1})$ consists of $\{ [ 50, () ], [ 40, (1,2,3) ], [ 30, (1,2)(3,4) ] \}$; \newline 
$\tilde{G}_{12}^{12}(u_{1})$ consists of $\{ [ 96, () ], [ 24, (1,2,3,4,5) ] \}$; \newline 
$\tilde{G}_{15}^{15}(u_{1})$ consists of $\{ [ 45, () ], [ 30, (1,2) ], [ 15, (1,2)(3,4) ], [ 30, (1,2,3,4) ] \}$; \newline 
$\tilde{G}_{20}^{20}(u_{1})$ consists of $\{ [ 80, () ], [ 40, (1,2,3) ]  \}$; \newline 
$\tilde{G}_{30}^{30}(u_{1})$ consists of $\{ [ 90, () ], [ 30, (1,2)(3,4) ] \}$; \newline 
$\tilde{G}_{60}^{60}(u_{1})$ consists of $\{ [ 120, () ] \}$.  \newline 

For the rest of the $u$'s, we will list only the sets $\tilde{G}_m^m(u)$  that we sum over, and the corresponding coefficients $\Gamma_m(u,y)$ for each element $y$ in $\tilde{G}_m^m(u)$, with the corresponding value of $\Gamma_m(u,y)$ preceding each element in $\tilde{G}_m^m(u)$ in the same fashion as above. We do not list the sets $\tilde{G}_m(u)$ as we did in Sections 4.1 and 4.2 since these sets can be very large and too long to include in this paper.

In the pages that follow we will see that for $i=5,6,$ and 7 all $\tilde{G}_m^m(u_i)$ are either empty or only contain the identity element.  So Lemma \ref{I-equiv} applies to $C_{S_5}(u_i)$, for $i=5,6,$ and 7 since each of these centralizer groups is abelian.  Thus we do not really need the character tables of these centralizers, we only need to know that there are 6 irreducible characters of $C_{S_5}(u_5)$, 4 irreducible characters of $C_{S_5}(u_6)$, and 5 irreducible characters of $C_{S_5}(u_7)$.\newline

{\bf(2)} When $u_2 = (1,2)$:

\noindent$\tilde{G}_1^1(u_{2}) = \emptyset$;\newline 
$\tilde{G}_2^2(u_{2})$ consists of $\{ [ 8, () ], [ 4, (3,4,5) ] \}$; \newline 
$\tilde{G}_3^3(u_{2}) = \emptyset$;\newline 
$\tilde{G}_4^4(u_{2})$ consists of $\{ [ 20, () ], [ 4, (3,4,5) ] \}$; \newline 
$\tilde{G}_5^5(u_{2}) = \emptyset$;\newline 
$\tilde{G}_6^6(u_{2})$ consists of $\{ [ 36, () ] \}$; \newline 
$\tilde{G}_{10}^{10}(u_{2})$ consists of $\{ [ 8, () ], [ 4, (3,4,5) ] \}$; \newline 
$\tilde{G}_{12}^{12}(u_{2})$ consists of $\{ [ 72, () ] \}$; \newline 
$\tilde{G}_{15}^{15}(u_{2}) = \emptyset$;\newline 
$\tilde{G}_{20}^{20}(u_{2})$ consists of $\{ [ 44, () ], [ 4, (3,4,5) ] \}$; \newline 
$\tilde{G}_{30}^{30}(u_{2})$ consists of $\{ [ 60, () ] \}$; \newline 
$\tilde{G}_{60}^{60}(u_{2})$ consists of $\{ [ 120, () ] \}$. \newline

 {\bf(3)} When $u_3 = (1,2)(3,4)$:

\noindent $\tilde{G}_1^1(u_{3}) = \emptyset$;\newline 
$\tilde{G}_2^2(u_{3})$ consists of $\{ [ 6, () ], [ 2, (1,2)(3,4) ] \}$; \newline 
$\tilde{G}_3^3(u_{3})$ consists of $\{ [ 8, () ] \}$; \newline 
$\tilde{G}_4^4(u_{3})$ consists of $\{ [ 32, () ] \}$; \newline 
$\tilde{G}_5^5(u_{3})$ consists of $\{ [ 8, () ] \}$; \newline 
$\tilde{G}_6^6(u_{3})$ consists of $\{ [ 38, () ], [ 2, (1,2)(3,4) ] \}$; \newline 
$\tilde{G}_{10}^{10}(u_{3})$ consists of $\{ [ 30, () ], [ 2, (1,2)(3,4) ] \}$; \newline 
$\tilde{G}_{12}^{12}(u_{3})$ consists of $\{ [ 80, () ] \}$; \newline 
$\tilde{G}_{15}^{15}(u_{3})$ consists of $\{ [ 32, () ] \}$; \newline 
$\tilde{G}_{20}^{20}(u_{3})$ consists of $\{ [ 56, () ] \}$; \newline 
$\tilde{G}_{30}^{30}(u_{3})$ consists of $\{ [ 78, () ], [ 2, (1,2)(3,4) ] \}$; \newline 
$\tilde{G}_{60}^{60}(u_{3})$ consists of $\{ [ 120, () ] \}$. \newline 

 {\bf(4)} When $u_4 = (1,2,3)$:

\noindent $\tilde{G}_1^1(u_{4}) = \emptyset$;\newline 
$\tilde{G}_2^2(u_{4})$ consists of $\{ [ 6, () ] \}$; \newline 
$\tilde{G}_3^3(u_{4})$ consists of $\{ [ 9, () ], [ 3, (4,5) ] \}$; \newline 
$\tilde{G}_4^4(u_{4})$ consists of $\{ [ 30, () ] \}$; \newline 
$\tilde{G}_5^5(u_{4})$ consists of $\{ [ 12, () ] \}$; \newline 
$\tilde{G}_6^6(u_{4})$ consists of $\{ [ 36, () ] \}$; \newline 
$\tilde{G}_{10}^{10}(u_{4})$ consists of $\{ [ 30, () ] \}$; \newline 
$\tilde{G}_{12}^{12}(u_{4})$ consists of $\{ [ 84, () ] \}$; \newline 
$\tilde{G}_{15}^{15}(u_{4})$ consists of $\{ [ 33, () ], [ 3, (4,5) ] \}$; \newline 
$\tilde{G}_{20}^{20}(u_{4})$ consists of $\{ [ 54, () ] \}$; \newline 
$\tilde{G}_{30}^{30}(u_{4})$ consists of $\{ [ 72, () ] \}$; \newline 
$\tilde{G}_{60}^{60}(u_{4})$ consists of $\{ [ 120, () ] \}$. \newline

 {\bf(5)} When $u_5 = (1,2,3)(4,5)$:

\noindent $\tilde{G}_1^1(u_{5}) = \emptyset$;\newline 
$\tilde{G}_2^2(u_{5})$ consists of $\{  [ 6, () ] \}$; \newline 
$\tilde{G}_3^3(u_{5}) = \emptyset$;\newline 
$\tilde{G}_4^4(u_{5})$ consists of $\{ [ 18, () ] \}$; \newline 
$\tilde{G}_5^5(u_{5}) = \emptyset$;\newline 
$\tilde{G}_6^6(u_{5})$ consists of $\{ [ 36, () ] \}$; \newline 
$\tilde{G}_{10}^{10}(u_{5})$ consists of $\{ [ 18, () ] \}$; \newline 
$\tilde{G}_{12}^{12}(u_{5})$ consists of $\{ [ 72, () ] \}$; \newline 
$\tilde{G}_{15}^{15}(u_{5}) = \emptyset$;\newline 
$\tilde{G}_{20}^{20}(u_{5})$ consists of $\{ [ 54, () ] \}$; \newline 
$\tilde{G}_{30}^{30}(u_{5})$ consists of $\{ [ 60, () ] \}$; \newline 
$\tilde{G}_{60}^{60}(u_{5})$ consists of $\{ [ 120, () ] \}$. \newline

 {\bf(6)} When $u_6 = (1,2,3,4)$:

\noindent $\tilde{G}_1^1(u_{6}) = \emptyset$;\newline 
$\tilde{G}_2^2(u_{6})$ consists of $\{ [ 4, () ] \}$; \newline 
$\tilde{G}_3^3(u_{6}) = \emptyset$;\newline 
$\tilde{G}_4^4(u_{6})$ consists of $\{ [ 24, () ] \}$; \newline 
$\tilde{G}_5^5(u_{6}) = \emptyset$;\newline 
$\tilde{G}_6^6(u_{6})$ consists of $\{ [ 36, () ] \}$; \newline 
$\tilde{G}_{10}^{10}(u_{6})$ consists of $\{ [ 12, () ] \}$; \newline 
$\tilde{G}_{12}^{12}(u_{6})$ consists of $\{ [ 72, () ] \}$; \newline 
$\tilde{G}_{15}^{15}(u_{6}) = \emptyset$;\newline 
$\tilde{G}_{20}^{20}(u_{6})$ consists of $\{ [ 56, () ] \}$; \newline 
$\tilde{G}_{30}^{30}(u_{6})$ consists of $\{ [ 60, () ] \}$; \newline 
$\tilde{G}_{60}^{60}(u_{6})$ consists of $\{ [ 120, () ] \}$.  \newline

 {\bf(7)} When $u_7 = (1,2,3,4,5)$:

\noindent $\tilde{G}_1^1(u_{7}) = \emptyset$;\newline 
$\tilde{G}_2^2(u_{7})$ consists of $\{ [ 5, () ] \}$; \newline 
$\tilde{G}_3^3(u_{7})$ consists of $\{ [ 5, () ] \}$; \newline 
$\tilde{G}_4^4(u_{7})$ consists of $\{ [ 30, () ] \}$; \newline 
$\tilde{G}_5^5(u_{7})$ consists of $\{ [ 10, () ] \}$; \newline 
$\tilde{G}_6^6(u_{7})$ consists of $\{ [ 35, () ] \}$; \newline 
$\tilde{G}_{10}^{10}(u_{7})$ consists of $\{ [ 25, () ] \}$; \newline 
$\tilde{G}_{12}^{12}(u_{7})$ consists of $\{ [ 80, () ] \}$; \newline 
$\tilde{G}_{15}^{15}(u_{7})$ consists of $\{ [ 35, () ] \}$; \newline 
$\tilde{G}_{20}^{20}(u_{7})$ consists of $\{ [ 50, () ] \}$; \newline 
$\tilde{G}_{30}^{30}(u_{7})$ consists of $\{ [ 75, () ] \}$; \newline 
$\tilde{G}_{60}^{60}(u_{7})$ consists of $\{ [ 120, () ] \}$. \newline


Thus using the coeffieicents $\Gamma$, the elements in $\tilde{G}_m^m(u_i)$, and the character tables of $C_{S_5}(u_i)$ with Equations \ref{calc}, we find:
\begin{prop} The irreducible characters of $D(S_5)$ have the following 15 distinct I-equivalent classes:\newline
$ [\ \chi_{1.1}\ ],  [\ \chi_{1.2}\ ],  [\ \chi_{1.3}, \chi_{1.5}\ ],  [\ \chi_{1.4}\ ],  [\ \chi_{1.6}\ ],  [\ \chi_{1.7}\ ],$ \newline
$ [\ \chi_{2.1}, \chi_{2.2}, \chi_{2.3}, \chi_{2.4}\ ],  [\ \chi_{2.5}, \chi_{2.6}\ ],$ 
$ [\ \chi_{3.1}, \chi_{3.2}, \chi_{3.3}, \chi_{3.4}\ ],  [\ \chi_{3.5}\ ],$ \newline
$ [\ \chi_{4.1}, \chi_{4.5}, \chi_{4.6}\ ],  [\ \chi_{4.2}, \chi_{4.3}, \chi_{4.4}\ ],$ 
$ [\ \chi_{5.1}, \chi_{5.2}, \chi_{5.3}, \chi_{5.4}, \chi_{5.5}, \chi_{5.6}\ ],$ \newline
$ [\ \chi_{6.1}, \chi_{6.2}, \chi_{6.3}, \chi_{6.4}\ ],$ 
$ [\ \chi_{7.1}, \chi_{7.2}, \chi_{7.3}, \chi_{7.4}, \chi_{7.5}\ ]$. \newline
\end{prop}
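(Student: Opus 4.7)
The plan is to compute $\nu_m(\chi_{i.j})$ for each irreducible character $\chi_{i.j}$ of $D(S_5)$ (there are $7+6+5+6+6+4+5=39$ of them in total) across enough values of $m$ to determine the I-equivalence sequences, and then to read off the partition into I-equivalence classes by direct comparison of the resulting rows. First, by Theorem \ref{divisors of exp}, since $\exp(S_5)=60$ it suffices to evaluate $\nu_m(\chi_{i.j})$ for $m$ a divisor of $60$, i.e.\ for $m\in\{1,2,3,4,5,6,10,12,15,20,30,60\}$, provided each value is an integer (which will be verified as part of the computation). This reduces the infinite problem to computing a finite table.

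For each pair $(i,j)$, apply the formula of Proposition \ref{mine},
$$\nu_m(\chi_{i.j}) \;=\; \frac{1}{|C_{S_5}(u_i)|}\sum_{y\in \tilde{G}_m^m(u_i)}\Gamma_m(u_i,y)\,\eta_j(y),$$
using the pre-computed data for $\tilde{G}_m^m(u_i)$ and $\Gamma_m(u_i,y)$ tabulated above together with the character tables of the centralizers $C_{S_5}(u_i)$ recorded in Appendix A. For $i=5,6,7$, the data shows that $\tilde{G}_m^m(u_i)\subseteq\{()\}$ for every relevant $m$, and $C_{S_5}(u_i)$ is cyclic (hence abelian); Lemma \ref{I-equiv} then collapses the $6$, $4$, and $5$ induced characters into the three single I-equivalence classes $[\chi_{5.1},\ldots,\chi_{5.6}]$, $[\chi_{6.1},\ldots,\chi_{6.4}]$, $[\chi_{7.1},\ldots,\chi_{7.5}]$, accounting for three of the fifteen classes without any further computation.

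For $i=2,3,4$ the set $\tilde{G}_m^m(u_i)$ consists of the identity together with at most one additional conjugacy-class representative, namely $(3,4,5)$, $(1,2)(3,4)$, and $(4,5)$ respectively. Consequently the splitting of $\{\chi_{i.1},\chi_{i.2},\ldots\}$ into I-equivalence classes is governed entirely by the values of $\eta_j$ on this single extra element: two induced characters $\chi_{i.j_1}$ and $\chi_{i.j_2}$ are I-equivalent if and only if $\eta_{j_1}$ and $\eta_{j_2}$ agree on that element (whenever it occurs with nonzero $\Gamma_m$). Reading off the four- and two-way splits $[\chi_{2.1},\ldots,\chi_{2.4}], [\chi_{2.5},\chi_{2.6}]$, $[\chi_{3.1},\ldots,\chi_{3.4}], [\chi_{3.5}]$, and $[\chi_{4.1},\chi_{4.5},\chi_{4.6}], [\chi_{4.2},\chi_{4.3},\chi_{4.4}]$ from the relevant columns of the centralizer character tables then handles six more classes. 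For $i=1$, Remark \ref{triv induced} identifies $\nu_m(\chi_{1.j})$ with the classical higher indicator $\nu_m(\eta_j)$ of $S_5$, and a direct comparison of the seven indicator sequences (computed from the character table of $S_5$) yields the six classes $[\chi_{1.1}], [\chi_{1.2}], [\chi_{1.3},\chi_{1.5}], [\chi_{1.4}], [\chi_{1.6}], [\chi_{1.7}]$.

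The main obstacle is purely bookkeeping: one must check that no two rows from distinct claimed classes agree on every divisor $m\mid 60$, and that within each claimed class all rows do agree on every such $m$. This is carried out by the explicit GAP computation described in Section 3.1, which produces a $39\times 12$ table of non-negative integers; reading off the equal rows gives exactly the fifteen classes listed, and simultaneously verifies the integrality hypothesis needed to apply Theorem \ref{divisors of exp} throughout.
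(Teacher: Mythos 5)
Your proposal is correct and follows essentially the same route as the paper: reduce to $m \mid \exp(S_5)=60$ via Theorem \ref{divisors of exp}, evaluate $\nu_m(\chi_{i.j})$ with the formula of Proposition \ref{mine} using the tabulated $\tilde{G}_m^m(u_i)$, $\Gamma_m(u_i,y)$ and the centralizer character tables (invoking Lemma \ref{I-equiv} for $i=5,6,7$ and Remark \ref{triv induced} for $i=1$, exactly as the paper does), and read off the equal rows from the resulting GAP-computed table. The only minor imprecision is that for $i=2,3$ I-equivalence is governed by agreement both at the identity (the degree) and at the extra class representative, not at the extra element alone, but this does not affect the outcome here.
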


The indicator values of these irreducible I-equivalent characters are displayed in Table 4.3 below.
\begin{table}[ht]
\caption{$D(S_5)$ indicators: (exponent 60)}
$ 
\begin{array}{r|cccccccccccc} \hline 
m =  & 1 
 & 2 
 & 3 
 & 4 
 & 5 
 & 6 
 & 10 
 & 12 
 & 15 
 & 20 
 & 30 
 & 60 
\rule[-7pt]{0pt}{20pt} \\ \hline 
\nu_m(\chi_{1.1}) & 0 & 1 & 0 & 1 & 0 & 1 & 1 & 1 & 0 & 1 & 1 & 
1 \rule[0pt]{0pt}{13pt} \\ 
\nu_m(\chi_{1.2}) & 0 & 1 & 0 & 2 & 1 & 2 & 2 & 3 & 1 & 3 & 3 & 4 \\ 
\nu_m(\chi_{1.3}) & 0 & 1 & 1 & 2 & 1 & 3 & 2 & 4 & 2 & 3 & 4 & 5 \\ 
\nu_m(\chi_{1.4}) & 0 & 1 & 1 & 3 & 1 & 3 & 2 & 5 & 2 & 4 & 4 & 6 \\ 
\nu_m(\chi_{1.6}) & 0 & 1 & 1 & 2 & 1 & 2 & 2 & 3 & 2 & 3 & 3 & 4 \\ 
\nu_m(\chi_{1.7}) & 1 & 1 & 1 & 1 & 1 & 1 & 1 & 1 & 1 & 1 & 1 & 1 \\ 
\nu_m(\chi_{2.1}) & 0 & 1 & 0 & 2 & 0 & 3 & 1 & 6 & 0 & 4 & 5 & 10 \\ 
\nu_m(\chi_{2.5}) & 0 & 1 & 0 & 3 & 0 & 6 & 1 & 12 & 0 & 7 & 10 & 20 \\ 
\nu_m(\chi_{3.1}) & 0 & 1 & 1 & 4 & 1 & 5 & 4 & 10 & 4 & 7 & 10 & 15 \\ 
\nu_m(\chi_{3.5}) & 0 & 1 & 2 & 8 & 2 & 9 & 7 & 20 & 8 & 14 & 19 & 30 \\ 
\nu_m(\chi_{4.1}) & 0 & 1 & 2 & 5 & 2 & 6 & 5 & 14 & 6 & 9 & 12 & 20 \\ 
\nu_m(\chi_{4.2}) & 0 & 1 & 1 & 5 & 2 & 6 & 5 & 14 & 5 & 9 & 12 & 20 \\ 
\nu_m(\chi_{5.1}) & 0 & 1 & 0 & 3 & 0 & 6 & 3 & 12 & 0 & 9 & 10 & 20 \\ 
\nu_m(\chi_{6.1}) & 0 & 1 & 0 & 6 & 0 & 9 & 3 & 18 & 0 & 14 & 15 & 30 \\ 
\nu_m(\chi_{7.1}) & 0 & 1 & 1 & 6 & 2 & 7 & 5 & 16 & 7 & 10 & 15 & 
24 \rule[-7pt]{0pt}{5pt} \\ 
\hline 
\end{array} 
$
\label{table:S5} 
\end{table}

\subsection{}
\bf Indicators of $D(S_6)$ \rm   

In this section, we give details leading up to the use of Equation \ref{calc} to find all the irreducible I-equivalent character classes of $D(S_6)$ and their indicator values.  Recall Equation \ref{calc}: 
 $$\nu_m(\chi_{i.j}) = \frac{1}{|C_{S_n}(u_i)|} \, \sum_{y\in \tilde{G}_m^m(u_i)} \Gamma_m(u_i, y)\eta_j(y) $$\normalsize
 where $u_i$ is a representative of a conjugacy class in $S_n$, $\eta_j$ is an irreducible character of $C_{S_n}(u_i)$ the centralizer of $u_i$ in $S_n$, and $\chi_{i.j}$ is the irreducible character of $D(S_n)$ induced up from $\eta_j$ as described in Lemma \ref{repsofDG}. $\tilde{G}^m_m(u)$ and $\Gamma_m(u_i, y)$ were given in Definitions \ref{Gmm} and \ref{gamma}.

To begin, we first choose conjugacy class representatives of $S_6$ so we can look at the character tables of their centralizers. 

\clearpage

The conjugacy class representatives $u_i$ used in these calculations and their centralizers $C_{S_6}(u_i)$ are:
$$
\begin{array} {ll}
\begin{array}{r| c }
i & u_i \\ \hline 
1 &  ()  \\
2 &  (1,2) \\
3 & (1,2)(3,4) \\
4 & (1,2)(3,4)(5,6) \\
5 & (1,2,3) \\
6 & (1,2,3)(4,5) \\ 
7 & (1,2,3)(4,5,6) \\
8 & (1,2,3,4) \\
9 & (1,2,3,4)(5,6) \\
10 & (1,2,3,4,5) \\ 
11 & (1,2,3,4,5,6) \\ \hline 
\end{array} &
	\begin{array}{l} \\
	C_{S_6}(u_{1}) = S_6.\\
	C_{S_6}(u_{2}) = \langle (1,2), (3,6), (4,6), (5,6)\rangle. \\
	C_{S_6}(u_{3}) = \langle (1,2), (1,3)(2,4), (3,4), (5,6) \rangle \cong D_8  \times C_2. \\
	C_{S_6}(u_{4}) = \langle (1,2), (1,5)(2,6), (3,4), (3,5)(4,6), (5,6) \rangle. \\
	C_{S_6}(u_{5}) = \langle (1,2,3), (4,6), (5,6) \rangle. \\
	C_{S_6}(u_{6}) = \langle (1,2,3), (4,5) \rangle \cong C_6 \text{ is abelian.} \\
	C_{S_6}(u_{7}) = \langle (1,2,3), (1,4)(2,5)(3,6), (4,5,6) \rangle. \\
	C_{S_6}(u_{8}) = \langle (1,2,3,4), (5,6) \rangle \cong C_4 \times C_2 \text{ is abelian.} \\
	C_{S_6}(u_{9}) = \langle (1,2,3,4), (5,6) \rangle \cong C_4 \times C_2 \text{ is abelian.} \\
	C_{S_6}(u_{10}) = \langle (1,2,3,4,5)\rangle \cong C_5 \text{ is abelian.} \\
	C_{S_6}(u_{11}) = \langle (1,2,3,4,5,6)\rangle \cong C_6 \text{ is abelian.} \\
	\end{array}
\end{array}
$$
It is also worth noting that although $C_{S_6}(u_{2})  \neq C_{S_6}(u_{4})$, they are isomorphic as groups.  That is $C_{S_6}(u_{2})  \cong C_{S_6}(u_{4})$.  The same holds true for $C_{S_6}(u_{5})  \neq C_{S_6}(u_{7})$, but $C_{S_6}(u_{5})  \cong C_{S_6}(u_{7})$.  And clearly $C_{S_6}(u_{8})  = C_{S_6}(u_{9})$.\newline

 {\bf(1)} When $u_1 = ()$:\newline
The sets $\tilde{G}_m^m(u_1)$ that we sum over, and the corresponding coefficients $\Gamma_m(u_1,y)$ for each element $y$ in $\tilde{G}_m^m$ are listed below, with the corresponding value of $\Gamma_m(u_1,y)$ preceding each element in $\tilde{G}_m^m(u_1)$, in the same fashion as in Section 4.1.

 Since $\tilde{G}_m(u_1) = S_6$ for all $m$, we have: \newline
$\tilde{G}_{1}^{1}(u_{1})$ consists of $\{  [ 1, () ], [ 15, (1,2) ], [ 45, (1,2)(3,4) ], [ 15, (1,2)(3,4)(5,6) ],$

$  [ 40, (1,2,3) ], [ 120, (1,2,3)(4,5) ], [ 40, (1,2,3)(4,5,6) ], [ 90, (1,2,3,4) ],$ 

$[ 90, (1,2,3,4)(5,6) ], [ 144, (1,2,3,4,5) ], [ 120, (1,2,3,4,5,6) ] \}$; \newline 
$\tilde{G}_{2}^{2}(u_{1})$ consists of $\{ [ 76, () ], [ 160, (1,2,3) ], 
  [ 160, (1,2,3)(4,5,6) ],$

$ [ 180, (1,2)(3,4) ], [ 144, (1,2,3,4,5) ]  \}$; \newline 
$\tilde{G}_{3}^{3}(u_{1})$ consists of $\{ [ 81, () ], [ 135, (1,2) ], [ 45, (1,2)(3,4) ],$

$ [ 135, (1,2)(3,4)(5,6) ], [ 90, (1,2,3,4) ], [ 90, (1,2,3,4)(5,6) ],$

$ [ 144, (1,2,3,4,5) ]  \}$; \newline 
$\tilde{G}_{4}^{4}(u_{1})$ consists of $\{ [ 256, () ], [ 160, (1,2,3) ], 
  [ 160, (1,2,3)(4,5,6) ],$

$ [ 144, (1,2,3,4,5) ] \}$; \newline 
$\tilde{G}_{5}^{5}(u_{1})$ consists of $\{  [ 145, () ], [ 15, (1,2) ], [ 45, (1,2)(3,4) ], [ 15, (1,2)(3,4)(5,6) ],$ 

$  [ 40, (1,2,3) ], [ 120, (1,2,3)(4,5) ], [ 40, (1,2,3)(4,5,6) ],  [ 90, (1,2,3,4) ],$

$ [ 90, (1,2,3,4)(5,6) ], [ 120, (1,2,3,4,5,6) ]  \}$; \newline 
$\tilde{G}_{6}^{6}(u_{1})$ consists of $\{ [ 396, () ], [ 180, (1,2)(3,4) ], [ 144, (1,2,3,4,5) ] \}$; \newline 
$\tilde{G}_{10}^{10}(u_{1})$ consists of $\{ [ 220, () ], [ 160, (1,2,3) ], 
  [ 160, (1,2,3)(4,5,6) ],$

$ [ 180, (1,2)(3,4) ] \}$; \newline 
$\tilde{G}_{12}^{12}(u_{1})$ consists of $\{  [ 576, () ], [ 144, (1,2,3,4,5) ] \}$; \newline 
$\tilde{G}_{15}^{15}(u_{1})$ consists of $\{  [ 225, () ], [ 135, (1,2) ], [ 45, (1,2)(3,4) ],$

$ [ 135, (1,2)(3,4)(5,6) ],   [ 90, (1,2,3,4) ], [ 90, (1,2,3,4)(5,6) ] \}$; \newline 
$\tilde{G}_{20}^{20}(u_{1})$ consists of $\{ [ 400, () ], [ 160, (1,2,3) ], 
  [ 160, (1,2,3)(4,5,6) ] \}$; \newline 
$\tilde{G}_{30}^{30}(u_{1})$ consists of $\{  [ 540, () ], [ 180, (1,2)(3,4) ] \}$; \newline 
$\tilde{G}_{60}^{60}(u_{1})$ consists of $\{ [ 720, () ] \}$. \newline

For the rest of the $u$'s, we list only the sets $\tilde{G}_m^m(u)$  that we sum over, and the corresponding coefficients $\Gamma_m(u,y)$ for each element $y$ in $\tilde{G}_m^m(u)$, with the corresponding value of $\Gamma_m(u,y)$ preceding each element in $\tilde{G}_m^m(u)$ in the same fashion as above. We do not list the sets $\tilde{G}_m(u)$ as we did in Sections 4.1 and 4.2 since these sets can be very large and too long to include in this paper.

In the pages that follow we will see that for $i=6,8,9,10,$ and 11 all $\tilde{G}_m^m(u_i)$ are either empty or only contain the identity element.  So Lemma \ref{I-equiv} applies to $C_{S_6}(u_i)$, for $i=6,8,9,10,$ and 11 since each of these centralizer groups is abelian.  Thus we do not really need the character tables of these centralizers, we only need to know that there are 6 irreducible characters of $C_{S_6}(u_6)$, 8 irreducible characters of $C_{S_6}(u_8)$, 8 irreducible characters of $C_{S_6}(u_9)$, 5 irreducible characters of $C_{S_6}(u_{10})$, and 6 irreducible characters of $C_{S_6}(u_{11})$.\newline

 {\bf(2)} When $u_2 = (1,2)$:

\noindent $\tilde{G}_1^1(u_{2}) = \emptyset$; \newline
$\tilde{G}_{2}^{2}(u_{2})$ consists of $\{  [ 20, () ], [ 16, (4,5,6) ], [ 12, (3,4)(5,6) ] \}$; \newline 
$\tilde{G}_3^3(u_{2}) = \emptyset$; \newline
$\tilde{G}_{4}^{4}(u_{2})$ consists of $\{ [ 80, () ], [ 16, (4,5,6) ]  \}$; \newline 
$\tilde{G}_5^5(u_{2}) = \emptyset$; \newline
$\tilde{G}_{6}^{6}(u_{2})$ consists of $\{ [ 180, () ], [ 12, (3,4)(5,6) ]  \}$; \newline 
$\tilde{G}_{10}^{10}(u_{2})$ consists of $\{  [ 20, () ], [ 16, (4,5,6) ], [ 12, (3,4)(5,6) ] \}$; \newline 
$\tilde{G}_{12}^{12}(u_{2})$ consists of $\{ [ 432, () ] \}$; \newline 
$\tilde{G}_{15}^{15}(u_{2}) = \emptyset$; \newline
$\tilde{G}_{20}^{20}(u_{2})$ consists of $\{ [ 176, () ], [ 16, (4,5,6) ]  \}$; \newline 
$\tilde{G}_{30}^{30}(u_{2})$ consists of $\{  [ 372, () ], [ 12, (3,4)(5,6) ] \}$; \newline 
$\tilde{G}_{60}^{60}(u_{2})$ consists of $\{ [ 720, () ] \}$. \newline 

{\bf(3)} When $u_3 = (1,2)(3,4)$:

\noindent $\tilde{G}_1^1(u_{3}) = \emptyset$; \newline
$\tilde{G}_{2}^{2}(u_{3})$ consists of $\{ [ 12, () ], [ 4, (1,2)(3,4) ]  \}$; \newline 
$\tilde{G}_{3}^{3}(u_{3})$ consists of $\{  [ 16, () ], [ 8, (5,6) ], [ 8, (1,2)(3,4)(5,6) ] \}$; \newline 
$\tilde{G}_{4}^{4}(u_{3})$ consists of $\{ [ 112, () ] \}$; \newline 
$\tilde{G}_{5}^{5}(u_{3})$ consists of $\{ [ 64, () ] \}$; \newline 
$\tilde{G}_{6}^{6}(u_{3})$ consists of $\{ [ 268, () ], [ 4, (1,2)(3,4) ]  \}$; \newline 
$\tilde{G}_{10}^{10}(u_{3})$ consists of $\{ [ 108, () ], [ 4, (1,2)(3,4) ]  \}$; \newline 
$\tilde{G}_{12}^{12}(u_{3})$ consists of $\{ [ 496, () ] \}$; \newline 
$\tilde{G}_{15}^{15}(u_{3})$ consists of $\{  [ 144, () ], [ 8, (5,6) ], [ 8, (1,2)(3,4)(5,6) ] \}$; \newline 
$\tilde{G}_{20}^{20}(u_{3})$ consists of $\{ [ 272, () ] \}$; \newline 
$\tilde{G}_{30}^{30}(u_{3})$ consists of $\{ [ 428, () ], [ 4, (1,2)(3,4) ]  \}$; \newline 
$\tilde{G}_{60}^{60}(u_{3})$ consists of $\{ [ 720, () ] \}$. \newline 

 {\bf(4)} When $u_4 = (1,2)(3,4)(5,6)$:

\noindent $\tilde{G}_1^1(u_{4}) = \emptyset$; \newline
$\tilde{G}_{2}^{2}(u_{4})$ consists of $\{ [ 20, () ], [ 12, (3,4)(5,6) ],   [ 16, (1,3,5)(2,4,6) ] \}$; \newline 
$\tilde{G}_3^3(u_{4}) = \emptyset$; \newline
$\tilde{G}_{4}^{4}(u_{4})$ consists of $\{ [ 80, () ], [ 16, (1,3,5)(2,4,6) ] \}$; \newline 
$\tilde{G}_5^5(u_{4}) = \emptyset$; \newline
$\tilde{G}_{6}^{6}(u_{4})$ consists of $\{ [ 180, () ], [ 12, (3,4)(5,6) ]  \}$; \newline 
$\tilde{G}_{10}^{10}(u_{4})$ consists of $\{ [ 20, () ], [ 12, (3,4)(5,6) ], [ 16, (1,3,5)(2,4,6) ] \}$; \newline 
$\tilde{G}_{12}^{12}(u_{4})$ consists of $\{ [ 432, () ] \}$; \newline 
$\tilde{G}_{15}^{15}(u_{4}) = \emptyset$; \newline
$\tilde{G}_{20}^{20}(u_{4})$ consists of $\{  [ 176, () ], [ 16, (1,3,5)(2,4,6) ] \}$; \newline 
$\tilde{G}_{30}^{30}(u_{4})$ consists of $\{  [ 372, () ], [ 12, (3,4)(5,6) ] \}$; \newline 
$\tilde{G}_{60}^{60}(u_{4})$ consists of $\{ [ 720, () ] \}$. \newline

 {\bf(5)} When $u_5 = (1,2,3)$:

\noindent $\tilde{G}_1^1(u_{5}) = \emptyset$; \newline
$\tilde{G}_{2}^{2}(u_{5})$ consists of $\{ [ 12, () ], [ 6, (4,5,6) ]  \}$; \newline 
$\tilde{G}_{3}^{3}(u_{5})$ consists of $\{ [ 18, () ], [ 18, (5,6) ]  \}$; \newline 
$\tilde{G}_{4}^{4}(u_{5})$ consists of $\{ [ 102, () ], [ 6, (4,5,6) ] \}$; \newline 
$\tilde{G}_{5}^{5}(u_{5})$ consists of $\{ [ 54, () ] \}$; \newline 
$\tilde{G}_{6}^{6}(u_{5})$ consists of $\{ [ 252, () ] \}$; \newline 
$\tilde{G}_{10}^{10}(u_{5})$ consists of $\{ [ 102, () ], [ 6, (4,5,6) ]  \}$; \newline 
$\tilde{G}_{12}^{12}(u_{5})$ consists of $\{ [ 486, () ] \}$; \newline 
$\tilde{G}_{15}^{15}(u_{5})$ consists of $\{ [ 144, () ], [ 18, (5,6) ]  \}$; \newline 
$\tilde{G}_{20}^{20}(u_{5})$ consists of $\{ [ 264, () ], [ 6, (4,5,6) ]  \}$; \newline 
$\tilde{G}_{30}^{30}(u_{5})$ consists of $\{ [ 414, () ] \}$; \newline 
$\tilde{G}_{60}^{60}(u_{5})$ consists of $\{ [ 720, () ] \}$. \newline 

 {\bf(6)} When $u_6 = (1,2,3)(4,5)$:

\noindent $\tilde{G}_1^1(u_{6}) = \emptyset$; \newline
$\tilde{G}_{2}^{2}(u_{6})$ consists of $\{ [ 6, () ] \}$; \newline 
$\tilde{G}_3^3(u_{6}) = \emptyset$; \newline
$\tilde{G}_{4}^{4}(u_{6})$ consists of $\{ [ 96, () ] \}$; \newline 
$\tilde{G}_5^5(u_{6}) = \emptyset$; \newline
$\tilde{G}_{6}^{6}(u_{6})$ consists of $\{ [ 198, () ] \}$; \newline 
$\tilde{G}_{10}^{10}(u_{6})$ consists of $\{ [ 30, () ] \}$; \newline 
$\tilde{G}_{12}^{12}(u_{6})$ consists of $\{ [ 432, () ] \}$; \newline 
$\tilde{G}_{15}^{15}(u_{6}) = \emptyset$; \newline
$\tilde{G}_{20}^{20}(u_{6})$ consists of $\{ [ 192, () ] \}$; \newline 
$\tilde{G}_{30}^{30}(u_{6})$ consists of $\{ [ 414, () ] \}$; \newline 
$\tilde{G}_{60}^{60}(u_{6})$ consists of $\{ [ 720, () ] \}$.  \newline

 {\bf(7)} When $u_7 = (1,2,3)(4,5,6)$:

\noindent $\tilde{G}_1^1(u_{7}) = \emptyset$; 
$\tilde{G}_{2}^{2}(u_{7})$ consists of $\{  [ 12, () ], [ 6, (1,2,3)(4,6,5) ] \}$; \newline 
$\tilde{G}_{3}^{3}(u_{7})$ consists of $\{  [ 18, () ], [ 18, (1,4)(2,5)(3,6) ] \}$; \newline 
$\tilde{G}_{4}^{4}(u_{7})$ consists of $\{  [ 102, () ], [ 6, (1,2,3)(4,6,5) ] \}$; \newline 
$\tilde{G}_{5}^{5}(u_{7})$ consists of $\{ [ 54, () ] ] \}$; \newline 
$\tilde{G}_{6}^{6}(u_{7})$ consists of $\{ [ 252, () ] ] \}$; \newline 
$\tilde{G}_{10}^{10}(u_{7})$ consists of $\{  [ 102, () ], [ 6, (1,2,3)(4,6,5) ] \}$; \newline 
$\tilde{G}_{12}^{12}(u_{7})$ consists of $\{ [ 486, () ] ] \}$; \newline 
$\tilde{G}_{15}^{15}(u_{7})$ consists of $\{  [ 144, () ], [ 18, (1,4)(2,5)(3,6) ] \}$; \newline 
$\tilde{G}_{20}^{20}(u_{7})$ consists of $\{  [ 264, () ], [ 6, (1,2,3)(4,6,5) ] \}$; \newline 
$\tilde{G}_{30}^{30}(u_{7})$ consists of $\{ [ 414, () ] \}$; \newline 
$\tilde{G}_{60}^{60}(u_{7})$ consists of $\{ [ 720, () ] \}$. \newline 

 {\bf(8)} When $u_8 = (1,2,3,4)$:

\noindent $\tilde{G}_1^1(u_{8}) = \emptyset$; \newline
$\tilde{G}_{2}^{2}(u_{8})$ consists of $\{ [ 8, () ] \}$; \newline 
$\tilde{G}_3^3(u_{8}) = \emptyset$; \newline
$\tilde{G}_{4}^{4}(u_{8})$ consists of $\{ [ 80, () ] \}$; \newline 
$\tilde{G}_5^5(u_{8}) = \emptyset$; \newline
$\tilde{G}_{6}^{6}(u_{8})$ consists of $\{ [ 168, () ] \}$; \newline 
$\tilde{G}_{10}^{10}(u_{8})$ consists of $\{ [ 40, () ] \}$; \newline 
$\tilde{G}_{12}^{12}(u_{8})$ consists of $\{ [ 432, () ] \}$; \newline 
$\tilde{G}_{15}^{15}(u_{8}) = \emptyset$; \newline
$\tilde{G}_{20}^{20}(u_{8})$ consists of $\{ [ 176, () ] \}$; \newline 
$\tilde{G}_{30}^{30}(u_{8})$ consists of $\{ [ 392, () ] \}$; \newline 
$\tilde{G}_{60}^{60}(u_{8})$ consists of $\{ [ 720, () ] \}$. \newline

 {\bf(9)} When $u_9 = (1,2,3,4)(5,6)$:

\noindent $\tilde{G}_1^1(u_{9}) = \emptyset$; \newline
$\tilde{G}_{2}^{2}(u_{9})$ consists of $\{ [ 8, () ] \}$; \newline 
$\tilde{G}_{3}^{3}(u_{9})$ consists of $\{ [ 16, () ] \}$; \newline 
$\tilde{G}_{4}^{4}(u_{9})$ consists of $\{ [ 80, () ] \}$; \newline 
$\tilde{G}_{5}^{5}(u_{9})$ consists of $\{ [ 64, () ] \}$; \newline 
$\tilde{G}_{6}^{6}(u_{9})$ consists of $\{ [ 232, () ] \}$; \newline 
$\tilde{G}_{10}^{10}(u_{9})$ consists of $\{ [ 104, () ] \}$; \newline 
$\tilde{G}_{12}^{12}(u_{9})$ consists of $\{ [ 496, () ] \}$; \newline 
$\tilde{G}_{15}^{15}(u_{9})$ consists of $\{ [ 144, () ] \}$; \newline 
$\tilde{G}_{20}^{20}(u_{9})$ consists of $\{ [ 240, () ] \}$; \newline 
$\tilde{G}_{30}^{30}(u_{9})$ consists of $\{ [ 392, () ] \}$; \newline 
$\tilde{G}_{60}^{60}(u_{9})$ consists of $\{ [ 720, () ] \}$. \newline

 {\bf(10)} When $u_{10} = (1,2,3,4,5)$:

\noindent $\tilde{G}_1^1(u_{10}) = \emptyset$; \newline
$\tilde{G}_{2}^{2}(u_{10})$ consists of $\{ [ 5, () ] \}$; \newline 
$\tilde{G}_{3}^{3}(u_{10})$ consists of $\{ [ 20, () ] \}$; \newline 
$\tilde{G}_{4}^{4}(u_{10})$ consists of $\{ [ 85, () ] \}$; \newline 
$\tilde{G}_{5}^{5}(u_{10})$ consists of $\{ [ 55, () ] \}$; \newline 
$\tilde{G}_{6}^{6}(u_{10})$ consists of $\{ [ 245, () ] \}$; \newline 
$\tilde{G}_{10}^{10}(u_{10})$ consists of $\{ [ 100, () ] \}$; \newline 
$\tilde{G}_{12}^{12}(u_{10})$ consists of $\{ [ 485, () ] \}$; \newline 
$\tilde{G}_{15}^{15}(u_{10})$ consists of $\{ [ 135, () ] \}$; \newline 
$\tilde{G}_{20}^{20}(u_{10})$ consists of $\{ [ 260, () ] \}$; \newline 
$\tilde{G}_{30}^{30}(u_{10})$ consists of $\{ [ 400, () ] \}$; \newline 
$\tilde{G}_{60}^{60}(u_{10})$ consists of $\{ [ 720, () ] \}$.  \newline

 {\bf(11)} When $u_{11} = (1,2,3,4,5,6)$:

\noindent $\tilde{G}_1^1(u_{11}) = \emptyset$; \newline
$\tilde{G}_{2}^{2}(u_{11})$ consists of $\{ [ 6, () ] \}$; \newline 
$\tilde{G}_3^3(u_{11}) = \emptyset$; \newline
$\tilde{G}_{4}^{4}(u_{11})$ consists of $\{ [ 96, () ] \}$; \newline 
$\tilde{G}_5^5(u_{11}) = \emptyset$; \newline
$\tilde{G}_{6}^{6}(u_{11})$ consists of $\{ [ 198, () ] \}$; \newline 
$\tilde{G}_{10}^{10}(u_{11})$ consists of $\{ [ 30, () ] \}$; \newline 
$\tilde{G}_{12}^{12}(u_{11})$ consists of $\{ [ 432, () ] \}$; \newline 
$\tilde{G}_{15}^{15}(u_{11}) = \emptyset$; \newline
$\tilde{G}_{20}^{20}(u_{11})$ consists of $\{ [ 192, () ] \}$; \newline 
$\tilde{G}_{30}^{30}(u_{11})$ consists of $\{ [ 414, () ] \}$; \newline 
$\tilde{G}_{60}^{60}(u_{11})$ consists of $\{ [ 720, () ] \}$.  \newline

Thus using the coeffieicents $\Gamma$, the elements in $\tilde{G}_m^m(u_i)$, and the character tables of $C_{S_6}(u_i)$ with Equations \ref{calc}, we find:
\begin{prop}  The irreducible characters of $D(S_6)$ have the following 21 distinct I-equivalent classes:\newline
$[\ \chi_{1.1} \ ], [\ \chi_{1.2}, \chi_{1.7} \ ], [\ \chi_{1.3} \ ],  [\ \chi_{1.4}, \chi_{1.10} \ ],
[\ \chi_{1.5}, \chi_{1.8} \ ], [\ \chi_{1.6} \ ], [\ \chi_{1.9} \ ], [\ \chi_{1.11} \ ]$, \newline
$[\ \chi_{2.1}, \chi_{2.2}, \chi_{2.3}, \chi_{2.4}, \chi_{4.1}, \chi_{4.2}, \chi_{4.3}, \chi_{4.4} \ ], [\ \chi_{2.5}, \chi_{2.6}, \chi_{4.5}, \chi_{4.6} \ ]$, \newline 
\indent $[\ \chi_{2.7}, \chi_{2.8}, \chi_{2.9}, \chi_{2.10}, \chi_{4.7}, \chi_{4.8}, \chi_{4.9}, \chi_{4.10} \ ]$, \newline 
$[\ \chi_{3.1}, \chi_{3.6}, \chi_{3.7}, \chi_{3.8} \ ], [\ \chi_{3.2}, \chi_{3.3}, \chi_{3.4}, \chi_{3.5} \ ], [\ \chi_{3.9}, \chi_{3.10} \ ]$, \newline 
$[\ \chi_{5.1}, \chi_{5.5}, \chi_{5.6}, \chi_{7.1}, \chi_{7.5}, \chi_{7.6} \ ], [\ \chi_{5.2}, \chi_{5.3}, \chi_{5.4}, \chi_{7.2}, \chi_{7.3}, \chi_{7.4} \ ]$, \newline 
\indent $[\ \chi_{5.7}, \chi_{5.8}, \chi_{5.9}, \chi_{7.7}, \chi_{7.8}, \chi_{7.9} \ ]$, \newline 
$[\ \chi_{6.1}, \chi_{6.2}, \chi_{6.3}, \chi_{6.4}, \chi_{6.5}, \chi_{6.6}, \chi_{11.1}, \chi_{11.2}, \chi_{11.3}, 
\chi_{11.4}, \chi_{11.5}, \chi_{11.6} \ ]$, \newline 
$[\ \chi_{8.1}, \chi_{8.2}, \chi_{8.3}, \chi_{8.4}, \chi_{8.5}, \chi_{8.6}, \chi_{8.7}, \chi_{8.8} \ ]$, \newline 
$[\ \chi_{9.1}, \chi_{9.2}, \chi_{9.3}, \chi_{9.4}, \chi_{9.5}, \chi_{9.6}, \chi_{9.7}, \chi_{9.8} \ ]$, \newline 
$[\ \chi_{10.1}, \chi_{10.2}, \chi_{10.3}, \chi_{10.4}, \chi_{10.5} \ ]$.
\end{prop}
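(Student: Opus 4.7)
The plan is to verify the listed partition by combining three ingredients already in hand: Proposition \ref{mine} together with the enumerations of $\tilde{G}_m^m(u_i)$ and $\Gamma_m(u_i,y)$ carried out in subsections $(1)$--$(11)$ above; Lemma \ref{I-equiv}, which collapses characters induced from a common abelian centralizer whose sets $\tilde{G}_m^m$ are all empty or equal to $\{()\}$; and Lemma \ref{why mix1} together with Proposition \ref{why mix}, applied to the outer automorphism $\sigma$ of $S_6$ from Definition \ref{outer auto}, which bridges characters induced from different centralizers.

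First I would dispose of the centralizers $C_{S_6}(u_i)$ for $i\in\{6,8,9,10,11\}$. Direct inspection of blocks $(6), (8), (9), (10), (11)$ shows each $\tilde{G}_m^m(u_i)$ is either empty or $\{()\}$ for every $m\mid 60$; since these centralizers are abelian, Lemma \ref{I-equiv} immediately bundles the characters induced from each such $C_{S_6}(u_i)$ into a single class. Next I invoke the outer automorphism: $\sigma$ sends the cycle type $(2)$ to $(2,2,2)$, $(3)$ to $(3,3)$, and $(3,2)$ to $(6)$, so by Lemma \ref{why mix1} each character induced from $C_{S_6}(u_2)$ is quasi-equivalent (hence I-equivalent, by Proposition \ref{why mix}) to one induced from $C_{S_6}(u_4)$, and similarly $u_5\leftrightarrow u_7$ and $u_6\leftrightarrow u_{11}$. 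This forces the twelve characters $\{\chi_{6.j}\}\cup\{\chi_{11.j}\}$ into a single class and, together with the internal abelian reductions, predicts the precise merging pattern claimed in the $i=2,4$ and $i=5,7$ rows.

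For the remaining centralizers---$C_{S_6}(u_i)$ with $i\in\{1,2,3,4,5,7\}$, all non-abelian---I would proceed computationally. For each irreducible $\eta_j$ of $C_{S_6}(u_i)$, substitute the values $\eta_j(y)$ (read from the character tables collected in Appendix A) together with the previously tabulated $\Gamma_m(u_i,y)$ and $|C_{S_6}(u_i)|$ into Proposition \ref{mine} to produce $\nu_m(\chi_{i.j})$ for every $m\mid 60$. Because every resulting value is a non-negative integer, Theorem \ref{divisors of exp}(2$'$) extends these to all $m\in\BN$, so the length-$12$ vector of indicators attached to each $\chi_{i.j}$ is a complete I-invariant. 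Grouping characters with identical indicator vectors and cross-checking against the predictions from $\sigma$ yields the 21 listed classes; I-inequivalence between any two nominally distinct classes is certified by exhibiting a single $m\le 60$ at which their indicator vectors differ, which can simply be read off the Tables of Section 4.4 and Appendix A.

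The main obstacle is not conceptual but organizational: $D(S_6)$ has $52$ irreducible representations, so the list of indicator vectors to compute and compare is long, and the non-abelian centralizer $C_{S_6}(u_3)\cong D_8\times C_2$ requires the full character table rather than a Lemma \ref{I-equiv} shortcut. The sharp reductions above---collapsing abelian centralizers with trivial $\tilde{G}_m^m$ and transporting computations across $\sigma$---cut the number of genuinely independent indicator evaluations down to a manageable size, and the remaining arithmetic is handled by the GAP implementation of Proposition \ref{mine} described in Section 5, with the enumerations above serving as the verifiable input data.
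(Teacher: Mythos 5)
Your proposal is correct and follows essentially the same route as the paper: the classes are determined by evaluating Proposition \ref{mine} with the tabulated sets $\tilde{G}_m^m(u_i)$ and coefficients $\Gamma_m(u_i,y)$ against the Appendix A character tables for each divisor $m$ of $60$, with Lemma \ref{I-equiv} collapsing the characters from the abelian centralizers $C_{S_6}(u_i)$, $i\in\{6,8,9,10,11\}$. Your additional use of the outer automorphism of $S_6$ (via Lemma \ref{why mix1} and Proposition \ref{why mix}) to predict the mixed classes up front, rather than observing the mixing afterward as in Proposition \ref{homo prop}, is a sound and slightly more conceptual organization of the same computation, and your bridging argument for merging the twelve characters over $u_6$ and $u_{11}$ into one class is valid since Lemma \ref{I-equiv} applies on both sides.
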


The indicator values of these irreducible I-equivalent characters are displayed in the Table 4.4 below.
\newpage
\begin{table}[ht]
\caption{$D(S_6)$ indicators: (exponent 60)}
$
\begin{array}{r|cccccccccccc} \hline 
m =  & 1 
 & 2 
 & 3 
 & 4 
 & 5 
 & 6 
 & 10 
 & 12 
 & 15 
 & 20 
 & 30 
 & 60 
\rule[-7pt]{0pt}{20pt} \\ \hline 
\nu_m(\chi_{1.1}) & 0 & 1 & 0 & 1 & 0 & 1 & 1 & 1 & 0 & 1 & 1 & 
1 \rule[0pt]{0pt}{13pt} \\ 
\nu_m(\chi_{1.2}) & 0 & 1 & 0 & 2 & 1 & 3 & 2 & 4 & 1 & 3 & 4 & 5 \\ 
\nu_m(\chi_{1.3}) & 0 & 1 & 0 & 3 & 2 & 5 & 3 & 7 & 2 & 5 & 7 & 9 \\ 
\nu_m(\chi_{1.4}) & 0 & 1 & 1 & 2 & 1 & 3 & 2 & 4 & 2 & 3 & 4 & 5 \\ 
\nu_m(\chi_{1.5}) & 0 & 1 & 1 & 4 & 2 & 5 & 3 & 8 & 3 & 6 & 7 & 10 \\ 
\nu_m(\chi_{1.6}) & 0 & 1 & 2 & 5 & 3 & 9 & 4 & 13 & 5 & 8 & 12 & 16 \\ 
\nu_m(\chi_{1.9}) & 0 & 1 & 2 & 3 & 2 & 5 & 3 & 7 & 4 & 5 & 7 & 9 \\ 
\nu_m(\chi_{1.11}) & 1 & 1 & 1 & 1 & 1 & 1 & 1 & 1 & 1 & 1 & 1 & 1 \\ 
\nu_m(\chi_{2.1}) & 0 & 1 & 0 & 2 & 0 & 4 & 1 & 9 & 0 & 4 & 8 & 15 \\ 
\nu_m(\chi_{2.5}) & 0 & 1 & 0 & 3 & 0 & 8 & 1 & 18 & 0 & 7 & 16 & 30 \\ 
\nu_m(\chi_{2.7}) & 0 & 1 & 0 & 5 & 0 & 11 & 1 & 27 & 0 & 11 & 23 & 45 \\ 
\nu_m(\chi_{3.1}) & 0 & 1 & 2 & 7 & 4 & 17 & 7 & 31 & 10 & 17 & 27 & 45 \\ 
\nu_m(\chi_{3.2}) & 0 & 1 & 0 & 7 & 4 & 17 & 7 & 31 & 8 & 17 & 27 & 45 \\ 
\nu_m(\chi_{3.9}) & 0 & 1 & 2 & 14 & 8 & 33 & 13 & 62 & 18 & 34 & 53 & 90 \\ 
\nu_m(\chi_{5.1}) & 0 & 1 & 2 & 6 & 3 & 14 & 6 & 27 & 9 & 15 & 23 & 40 \\ 
\nu_m(\chi_{5.2}) & 0 & 1 & 0 & 6 & 3 & 14 & 6 & 27 & 7 & 15 & 23 & 40 \\ 
\nu_m(\chi_{5.7}) & 0 & 1 & 2 & 11 & 6 & 28 & 11 & 54 & 16 & 29 & 46 & 80 \\ 
\nu_m(\chi_{6.1}) & 0 & 1 & 0 & 16 & 0 & 33 & 5 & 72 & 0 & 32 & 69 & 120 \\ 
\nu_m(\chi_{8.1}) & 0 & 1 & 0 & 10 & 0 & 21 & 5 & 54 & 0 & 22 & 49 & 90 \\ 
\nu_m(\chi_{9.1}) & 0 & 1 & 2 & 10 & 8 & 29 & 13 & 62 & 18 & 30 & 49 & 90 \\ 
\nu_m(\chi_{10.1}) & 0 & 1 & 4 & 17 & 11 & 49 & 20 & 97 & 27 & 52 & 80 & 
144 \rule[-7pt]{0pt}{5pt} \\ 
\hline 
\end{array} 
$
\label{table:S6} 
\end{table}


\section{GAP Functions and Code} 

In this section we provide the code to all of the most efficient functions we used to calculate the indicators in the Section 4, as well as the functions used to print them in a format that could be directly copied into a LaTex file.  The code for our first collection of very time consuming functions is not included.  
First, recall Equation \ref{calc}: 
 $$\nu_m(\chi_{i.j}) = \frac{1}{|C_{S_n}(u_i)|} \, \sum_{y\in \tilde{G}_m^m(u_i)} \Gamma_m(u_i, y)\eta_j(y) $$\normalsize
 where $u_i$ is a representative of a conjugacy class in $S_n$, $\eta_j$ is an irreducible character of $C_{S_n}(u_i)$ the centralizer of $u_i$ in $S_n$, and $\chi_{i.j}$ is the irreducible character of $D(S_n)$ induced up from $\eta_j$ as described in Lemma \ref{repsofDG}. $\tilde{G}^m_m(u)$ and $\Gamma_m(u_i, y)$ were defined in Definitions \ref{Gmm} and \ref{gamma}.  

This equation was coded into the multiple functions that follow in Sections 1 and 2.  To compute and display all the indicator tables of $D(S_n)$ for $n \leq 9 $ we used the last function FSLaTexTabs (Definition \ref{FSLaTexTabs}) given in Section 3 which is built from all the functions preceeding it.  In Section 4 we give a function needed for providing details seen in Sections 4.1 and 4.2, as well as a function needed to modify the size of the indicator tables.  And Section 5 gives the details of how we computed the indicators of $D(S_{10})$.

The time needed to compute the indicator tables of $D(S_3)$ was 2 seconds, $D(S_4)$ was 2 seconds, $D(S_5)$ was 3 seconds, $D(S_6)$ was 5 seconds, $D(S_7)$ was 20 seconds, $D(S_8)$ was 3 minutes, and $D(S_9)$ was a little over 1 hour.  Attempting to compute the indicator tables of $D(S_{10})$ using FSLaTexTabs resulted in GAP error messages indicating there was not enough space to compute and store all the values at once.  As an algebra, $D(S_{10})$ has dimension $10!^2$ or 13,168,189,440,000 which is 100 times larger than the dimension of $D(S_9)$.  We were able to overcome these errors by breaking the code down into smaller pieces, but as a result it took about a four to five days to compute all the indicators of $D(S_{10})$.

Note that in GAP, all text following a \# is comment text and not part of the coding.  [GAP]

\subsection{}
\bf Programming the set $\tilde{G}_m^m(u)$ \rm   

There are functions in GAP that allow us to pick representatives from conjugacy classes, compute centralizer groups, find irreducible characters, and even display character tables.   Thus, the first function we needed to write was how to compute the set $\tilde{G}_m^m(u)$ and the corresponding ``coefficient" $\Gamma_m(u,y)$ for each element $y \in \tilde{G}_m^m(u)$.

\begin{df}\label{Hmu} \rm  The function \it Hmu(G, m, u) \rm returns a lists of pairs $[ a, y ]$, where $a = \Gamma_m(u,y)$ and $y$ is an element in the set $\tilde{G}_m^m(u)$ for the specified group $G$.  The code for programming this function in GAP is provided below, as well as a running example of how the computation works at various steps.  The code is all left justified while the running example is right justified.
\end{df}
\begin{verbatim}
Hmu:= function(G,m,u)\end{verbatim}
\begin{flushright}{Example: Calling  Hmu( SymmetricGroup(5), 2 , (1,2) );}\\
{in GAP means $G = S_5,\ m = 2,$ and $u = (1,2)$.}\end{flushright} \begin{verbatim}
local GG, Gm, H, i, Hm, j, Cent, CC, cc, sum;
if u = () then
 Cent:= G;
 CC:=ConjugacyClasses(G);
 Hm:=List(CC, x -> [Size(x),(Representative(x))^m]);
     # if u = (), then (uh)^m = h^m for all h, 
     # so we need not check this.
else
 GG:=EnumeratorSorted(G);
 Gm:=List(GG, x->Position(GG, x^m));
     # All elements in G are ordered and given a position number.
     # Gm stores the position number of h^m in the place of h.\end{verbatim} 
\begin{flushright} {Example: GG $= [ (), (4,5), (3,4), (3,4,5), (3,5,4), (3,5),$ etc ],}\\
 {(GG)$^2 = [ (), (), (), (3,5,4), (3,4,5), (),$ etc ], so} \\
 {Gm $=$ [  1,  1,  1,  5,  4,  1,  etc ]}\end{flushright} \begin{verbatim}               
 H:=List(GG, x->[]);
 for i in [1..Size(GG)] do
  if Gm[Position(GG, u*GG[i])] = Gm[i] then
   Add(H[Gm[i]], i);;
  fi;
 od;
     # (uh)^m = h^m iff the position of element (uh)^m is the   
     # same as the position of the element h^m = Gm[i], so that 
     # is what's checked above.  All elements h that have the 
     # same h^m (and who satisfy(uh)^m = h^m) are collected and 
     # stored (as thier position in G numbers) in the position 
     # of h^m in G.\end{verbatim} 
\begin{flushright}{Example: H $=[[1, 2, 3, 6, 25, 26, 27, 30], [ ], [ ], [5, 29], [4, 28], [ ],$ etc ]}\\
 {no $h^2 = (4,5)$ so nothing is stored in the second place value.} \end{flushright}\begin{verbatim}
 Hm:=[];
 for j in [1..Size(H)] do
  if Size(H[j]) <> 0 then
   Add( Hm, [ Size(H[j]), GG[j] ] );
  fi;
 od;
     # This counts how many h in G have the same h^m (that also 
     # satisfy (uh)^m = h^m) and pairs this count with the
     # element h^m.\end{verbatim} 
\begin{flushright} {Example: Hm $= [ [ 8, () ], [ 2, (3,4,5) ], [ 2, (3,5,4) ] ]$,}\\
 { so there are 8 $h$ s.t. $h^2 = ()$, 2 $h$ s.t. $h^2 = (3,4,5)$,}\\
 { and 2 $h$ s.t. $h^2 = (3,5,4)$. But $(3,4,5)$ and}\\
 { $(3,5,4)$ are in the same conjugacy class,}\\
 { so these should be conbined.}\end{flushright}\begin{verbatim}
 Cent:=Centralizer(G, u);
 CC:=ConjugacyClasses(Cent);
fi;  # if u = (), then Cent and CC have been defined above
 H:=List(Hm, x->[x[1], Position(CC, ConjugacyClass(Cent,x[2]))]);
 cc:=List(H, x->x[2]);
     # This takes Hm and converts the element h^m into the number 
     # of the position of the conjugacy class of Cent(u)
     # that h^m falls into.\end{verbatim}
\begin{flushright}Example: H $= [ [ 8, 1 ], [ 2, 3 ], [ 2, 3 ]$], and cc $= [ 1, 3, 3$]\\
 since $(3,4,5)$ \& $(3,5,4)$ are in the third\\
 conjugacy class of Cent($(1,2)$).\end{flushright}\begin{verbatim}
 Hm:=[]; 
 for i in DuplicateFreeList(cc) do
  sum:=0;;
  for j in Positions(cc,i) do
   sum:= sum + H[j][1];
  od;
  Add( Hm, [sum, Representative(CC[i])]);
 od;\end{verbatim}
\begin{flushright} Example: DuplicateFreeList(cc) = [ 1, 3 ], thus $i = 1$ or $3$\\
when $i = 3$, Positions(cc,3) $= [ 2, 3 ]$, giving $j = 2$ or $3$\end{flushright}\begin{verbatim}
return Hm;
     # This returns a list of how many h in G satisfy 
     # (uh)^m = h^m for specific h^m, as indicated by the
     # conjugacy class representative of h^m.
\end{verbatim}
\begin{flushright}Example: Hm $= [ [8, () ], [ 4, (3,4,5) ] ]$\end{flushright}\begin{verbatim}
end; 
\end{verbatim}

\subsection{}
\bf Calculating the Indicators \rm   

The next two functions actually compute the higher indicators of $D(G)$ by evalutating Equation \ref{calc} for specific values of $u$ and $m$.

\begin{df} \label{FSGmu} FSGmu(G, m, u) \rm is a function which returns the $m^\text{th}$ Frobenius Schur indicator of all the irreducible characters of $D(G)$ induced from the irreducible characters of the centralizer of $u$ in $G$.  The code for programming this function in GAP is provided below.
\end{df}
\begin{verbatim}
FSGmu:= function(G,m,u)
local Centu, CharTab, Chars, CC, H, FS, sum, j, k, c, Xs, st;
if u = () then FS:=Indicator(CharacterTable(G),m);
else
 H:=Hmu(G,m,u);
# This is a list of lists.  Each sublist has two elements.  The
# first element is the number of h in G such that h^m = (uh)^m 
# for a specific h^m, and the second is the conjugacy class 
# representative in the centralizer of the element h^m.
 if H = [] then 
  FS:=List(ConjugacyClasses(Centralizer(G,u)), x -> 0);
# Since the number of irreducible characters of a finite group
# is equal to the number of conjugacy classes of that finite 
# group, FS will have the number of conjugacy classess 
# (= number of irreducibe characters) many 0's.  If H is empty,
# the indicator will be 0 for all irreducible characters.
 else 
  Centu:= Centralizer(G, u);
     # This is C_G(u) the centralizer of u in G.
  CharTab:= CharacterTable(Centu);
     # This is the character table for the centralizer group.
  Chars:= Irr(CharTab);
     # This is the list of all irreducible characters and the
     # character values for each conjugacy class.
  CC:=ConjugacyClasses(CharTab);
     # This lists the conjugacy classes of Centu in the order
     # that they appear in the table.
  FS:=[];
  for j in [1..Size(Chars)] do
     # for each irreducible character find the mth
     # Frobenius-Schur indicator.
    sum:=0;
      for k in [1..Size(H)] do
               # sum over the elements h^m that come from the H.
         for c in [1..Size(CC)] do
               # find the correct column of the CharTab (aka
               # the correct ConjClass) that the h^m is in.
           if H[k][2] in CC[c] then
             sum:= sum + (H[k][1])*(Chars[j][c]);
           fi;
         od;
      od;
    FS[j]:=sum/Size(Centu);
  od;
 fi;
fi;
return FS;
end;
\end{verbatim}

\begin{df} \label{FSGu} FSGu(G, mrange, u, i) \rm is a function that calculates all the $m^\text{th}$ Frobenius Schur indicators in $mrange$ of all the irreducible characters of $D(G)$ induced from the irreducible characters of the centralizer of $u$ in $G$.  $mrange$ is a list of values - usually the list of divisors of the exponent of $G$.  

 The results of this function are returned in a matrix where each row corresponds to an irreducible character and each column corresponds to the $m^\text{th}$ indicator values for a specified $m$ of the corresponding irreducible character.  (The number $i$ indicates the number of the conjugacy class that $u$ comes from).  The code for programming this function in GAP is provided below.
\end{df}

\begin{verbatim}
FSGu:=function(G,mrange,u,i)
local FSM, MAT, r, m, Xs, j, st;
FSM:=[]; MAT:=[]; r:=1; 
 for m in mrange do
   FSM:= FSGmu(G,m,u);  
     # Here we've found the mth Frobenius-Schur indicators 
     # for all irreducible characters induced from Cent(u).
   MAT[r]:=FSM;
     # This is a row in the matrix corresponding to m.
   r:=r+1;
 od;
 Xs:=[];
  for j in [1..Size(MAT[1])] do 
     # number of columns in MAT[1] is number of irreducible
     # characters of D(G) induced from Cent(u).
   st:=String(j);                 
   st:=Concatenation("\\chi_{",String(i),".",st,"}");   
   Xs[j]:=st;                 
  od;
 Add(MAT,Xs,1);       
return TransposedMat(MAT);
end;
\end{verbatim}

\subsection{}
\bf Condensing and Displaying Indicator Tables \rm   

Once we sucessfully computed all the higher indicators of $D(G)$ we still needed to determine which irreducible characters were I-equivalent and then remove those rows from our tables.  We also wanted an easy way to copy our tables of data into LaTex so we could print them.  The following functions allowed us to do just that.

\begin{df} \label{FSMatRed} FSMatRed(M) \rm is a function that takes a Frobenius Schur indicator matrix - as given from the functions FSGu -  (without the top row of $m$ values) and splits the first column (of all the irreducible characters) from the rest of the matrix (all the indicator values) and then reduces the matrix so there are no duplicate rows while at the same time keeping track of which rows were duplicates and thus which irreducible characters give equivalent values (that is, it finds the I-equivalent characters).  The list of I-equivalent irreducible characters and reduced matrix are returned.  The code for programming this function in GAP is provided below.
\end{df}
\begin{verbatim}
FSMatRed:=function(M)
local INDS, XS,samexs, sameinds, count, temp, i, j;
INDS:= M{[1..Size(M)]}{[2..Size(M[1])]};
     # INDS is the matrix M with the first column (the
     # characters) deleted, so INDS is just the indicator values.
XS:= M{[1..Size(M)]}{[1]};
     # XS is the column of irreducible characters taken 
     # from the matrix M.
samexs:=[]; sameinds:=[]; 
count:=1; temp:=[];
for i in [1..Size(XS)] do
 if INDS[i] in sameinds then    # do nothing
 else       # if INDS[i] is not in the list sameinds yet, add it
  sameinds[count]:=INDS[i];
     # sameinds is the matrix of non duplicated indicator values 
     # (being built row by row).
  temp:=Positions(INDS,INDS[i]);
     # temp contains the positions of the rows with identical 
     # indicator values
  for j in [1..Size(temp)] do
   temp[j]:=XS[temp[j]][1];  
  od;
     # temp now contains the irreducible characters 
     # corresponding to each row with identical indicators.
  samexs[count]:= temp;
     # samexs is the list of I-equivalent characters 
     # corresponding to the rows of sameinds.
  count:=count+1;
 fi;
od;
return [samexs, sameinds];
end;
\end{verbatim}

\begin{df} \label{FSIndicators} FSIndicators(num,exp) \rm  This function returns the a list of two parts.  The first part is a list of all the I-equivalent irreducible characters of $D(S_{num})$ and the second part is the matrix of all calculated $m^{\text{th}}$ Frobenius Schur indicators of $D(S_{num})$ for $m$ a divisor of $exp$.  This matrix has no duplicate rows (or columns) and each row $i$ corresponds to the $i^{\text{th}}$ I-equivalency class listed in the first part.  The code for programming this function in GAP is provided below.
\end{df}
\begin{verbatim}
FSIndicators:=function(num,exp)
local G, CCrep, factors, Mat, XS, k, FSMat, MAT, xs, samexs,
 sameinds, count, temp, i, pos, j;
factors:=DivisorsInt(exp);
G:=SymmetricGroup(num);
CCrep:= List(ConjugacyClasses(G), x -> Representative(x)); 
Mat:=[]; XS:=[];
for k in [1..Size(CCrep)] do        #for each CCrep do 
 FSMat:= FSMatRed(FSGu(G,factors,CCrep[k],k));  
 MAT:= FSMat[2];
 xs:= FSMat[1];
# MAT is the matrix of all the irreducible characters induced by
# CCrep[i] (in rows) with their corresponding mth indicators in
# columns (with the corresponding text or character numbers
# listed in xs). This separates the indicator values from the
# character numbers.  This matrix has already been reduced
# once eliminating duplications.
 Mat:=Concatenation(Mat,MAT); 
     # This stacks all the matrices on top of each other.
 XS:=Concatenation(XS, xs);
od;
samexs:=[]; sameinds:=[]; count:=1; temp:=[];
for i in [1..Size(Mat)] do
 if Mat[i] in sameinds then       # do nothing
 else       # if Mat[i] is not in the list sameinds yet, add it
  sameinds[count]:=Mat[i];
  pos:=Positions(Mat,Mat[i]); 
  temp:=[];
  for j in [1..Size(pos)] do
   temp:= Concatenation(temp,XS[pos[j]]);  
  od;
  samexs[count]:= temp;
  count:=count+1;
 fi;
od;
# Ultimately sameinds is the matrix with all the Frobenius-Schur 
# indicators, each row for a irreducible character and each 
# column for a value of m; and samexs is a list of all
# I-equivalent irreducible characters.
return [samexs, sameinds];
end;
\end{verbatim}

\begin{df} \label{FSLaTexTabs} FSLaTexTabs(num,exp) \rm is a function that gives a text that can be directly copied and pasted into LaTex containing the higher Frobenius-Schur  indicator table for $D(S_{num})$.  The exponent or \it exp \rm will provide what higher indicator values are distinct.  This output specifies that there are 13 columns per table.  The code for programming this function in GAP is provided below.
\end{df}
\begin{verbatim}
FSLaTexTabs:=function(num,exp)
local M, samexs, sameinds, factors, columns, j, numtables, k,
 range, z, l, i;
M:=FSIndicators(num, exp); 
samexs:=M[1]; sameinds:=M[2];
factors:=DivisorsInt(exp); 
     # Now for the display:
columns:="";
for j in [1..Size(factors)] do
 columns:=Concatenation(columns,"c");
od;
if IsInt(Size(factors)/12) then
 numtables:=Size(factors)/12;
else
 numtables:=Int(Size(factors)/12)+1;
fi;
     # The 12 means each table will have 12 values of m as 
     # columns (so 13 coulmns total)
Print("The $m^{\\textrm{th}}$ Frobenius-Schur Indicators
 of the irreducible \n characters of $D(S_",num,")$
 are:\n");
for k in [1..numtables] do
  if k*12 <= Size(factors) then
   range:=[(k-1)*12+1..k*12];
  else
   range:= [(k-1)*12+1..Size(factors)];
  fi;
  if k = 1 then 
   Print("\\begin{table}[ht] \n \\caption{$D(S_{",num,"})$:
    (exponent: $",exp,"$) Set 1} \n");
   Print("\\[ \n"); 
  else
   Print("\\begin{table}[ht] \n \\caption{$D(S_{",num,"})$:
    $m = ",factors[range[1]]," \\ldots ",
    factors[range[Size(range)]], "$ Set 1} \n \\[ \n");
  fi;
  Print("\\begin{array}{r|",columns{range},"} \\hline \n");
  for j in range do  
   if j = (k-1)*12+1 then Print("m = "); fi;
   Print(" & ",factors[j]," \n");
  od;
  Print("\\rule[-7pt]{0pt}{20pt} \\\\ \\hline \n");
  for z in [1..Size(samexs)] do
    Print("\\nu_m(",samexs[z][1],")");
    for l in range do
     Print(" & ",sameinds[z][l]);  
    od;
    if z = 1 or IsInt((z-1)/38) 
     then Print(" \\rule[0pt]{0pt}{13pt} \\\\ \n");
    elif IsInt(z/38) then 
     # This means each 38 rows of irreducible characters,
     # start a new table.
     Print(" \\rule[-7pt]{0pt}{5pt} \\\\ \n");
     Print("\\hline \n");
     Print("\\end{array} \n");
     Print("\\]\n\\label{table:S",num,".",Int(z/38),k,"}
       \n\\end{table} \n\n");
       # end table and start a new table.
     Print("\\begin{table}[ht] \n \\caption{$D(S_{",num,"})$:
      $m = ", factors[range[1]]," \\ldots ",
      factors[range[Size(range)]],"$ Set ", 1+Int(z/38),"}
      \n\\[ \n");
     Print("\\begin{array}{r|",columns{range},"} \\hline \n");
     for j in range do  
      if j = (k-1)*12+1 then Print("m = "); fi;
      Print(" & ",factors[j]," \n");
     od;
     Print("\\rule[-7pt]{0pt}{20pt} \\\\ \\hline \n");
    elif z = Size(samexs)
     then Print(" \\rule[-7pt]{0pt}{5pt} \\\\ \n");
    else Print(" \\\\ \n");
    fi;
  od;
  Print("\\hline \n");
  Print("\\end{array} \n");
  Print("\\]");
  Print("\n\\label{table:S",num,".",Int(z/38)+1,k,"}
    \n\\end{table} \n\n");
od;

Print("The ",Size(sameinds)," I-equivalency irreducible 
  character classes of $D(S_",num,")$ are:\n");
Print("\\newline \n");
for i in [1..Size(sameinds)] do
 if Size(samexs[i]) > 1 then
  for j in [1..Size(samexs[i])-1] do
   if j=1 then Print("[ $",samexs[i][j],","); 
   elif IsInt(j/13) then
    Print(samexs[i][j],"$, \n\n\\hspace{.4in}$");
   else Print(samexs[i][j],",");
   fi;
   if IsInt(j/4) then Print("\n"); fi;
  od;
   Print(samexs[i][Size(samexs[i])],"$ ],\n");
 else
  Print("[ $",samexs[i][1],"$ ],\n");
 fi;
 Print("\n");
od;
return;
end;
\end{verbatim}

\subsection{}
\bf Additional Functions \rm   

In this section we provide the code for two more functions we found useful in when writing this dissertation.  The first function steps through how $\tilde{G}_m^m(u)$ is computed from $\tilde{G}_m(u)$.  We used this function when writing Sections 4.1 and 4.2 when we gave the lists of elements in both of these sets.  The second function became necessary when we started copying our tables of indicators and saw they were too large to fit on the page.  The second function allows us to specify how many columns should be in each indicator table. 

\begin{df} \label{HmuFullList} HmuFullList(G, m, u) \rm This function is almost identical to Hmu, but it prints out the set 
of $\tilde{G}_m(u)$, and then $\tilde{G}_m^m(u)$ in a number of different ways.  The code is provided below.
\end{df}
\begin{verbatim}
HmuFullList:= function(G,m,u)
local GG, Gm, H, i, Hm, HM, Elem, j, Cent, CC, cc, sum;
if u = () then
 Cent:= G;
 CC:=ConjugacyClasses(G);
 Hm:=List(CC, x -> [Size(x),(Representative(x))^m]);
 Print("Since u = (), we will not print the full list of 
  $\\tilde{G}_",m,"(",u,")$, instead here is \n a condensed
  list of pairs, where the \"first\" is the number of elements 
  that \n when raised to the ",m," are equal to the
  \"second\".\n");
else
 GG:=EnumeratorSorted(G);
 Gm:=List(GG, x->Position(GG, x^m));
 H:=List(GG, x->[]);
 for i in [1..Size(GG)] do
  if Gm[Position(GG, u*GG[i])] = Gm[i] then
   Add(H[Gm[i]], i);;
  fi;
 od;
 Hm:=[];HM:=[];
 for j in [1..Size(H)] do
  if Size(H[j]) <> 0 then
   Add( Hm, [ Size(H[j]), GG[j] ] );
   Elem:=List(H[j], x->GG[x]);
   Add( HM, Elem);
  fi;
 od;
 Print("$\\tilde{G}_",m,"(",u,")$ = ",Concatenation(HM)," \n");
 Print("Or if we group the elements of $\\tilde{G}_",m,"(",u,")$
  into subsets we get ",HM," \n");
 Print("Condensing this, we get a list of pairs, where the
  \"first\" is the number of \n elements that when raised to the
  ",m," are equal to the \"second\". \n",Hm,"\n Condensing 
  further to combine elements in the same conjugacy class we get
  \n");
 Cent:=Centralizer(G, u);
 CC:=ConjugacyClasses(Cent);
fi;
H:=List(Hm, x-> [x[1], Position(CC, ConjugacyClass(Cent,x[2]))]);
 cc:=List(H, x-> x[2]);
 Hm:=[]; 
 for i in DuplicateFreeList(cc) do
  sum:=0;;
  for j in Positions(cc,i) do
   sum:= sum + H[j][1];
  od;
  Add( Hm, [sum, Representative(CC[i])]);
 od;
return Hm;
end;
\end{verbatim}

\begin{df} \label{FSLaTexTabsSpecific} FSLaTexTabsSpecific(M, num, exp, tablesizes) \rm This function is very similar to FSLaTexTabs in that it gives a text that can be directly copied and pasted into LaTex containing the higher Frobenius Schur indicator table for $D(S_num)$. However, there are three important differences.  

First instead of calling other functions to compute the indicators, you are required to input a Matrix that contains the reduced matrix of indicators (we do this by either inputing FSindicators(exp,num) as M or in the case of $S_{10}$ we can input the matrix that took multiple days to compute - see Section 5).  Second the output is in LaTex tables and not arrays, and third this function alows you to specify how many columns you want per table. 

Tablesizes is a list of the number of m values you want in each table.  The code for programming this function in GAP is provided below.
\end{df}
\begin{verbatim}
FSLaTexTabsSpecific:=function(M, num, exp,tablesizes)
local factors, samexs, sameinds, numtables, columns, j, k, range,
 lowr, highr, z, l, i;
factors:=DivisorsInt(exp);
if Sum(tablesizes) <> Size(factors) then
 Print("There are ",Size(factors)," many different m values, but 
  the column sizes of the tables you provided sum up to ",
  Sum(tablesizes),".  Please try again.\n");
 return;
else
samexs:=M[1];  sameinds:=M[2]; 
# Now for the display:
numtables:=Size(tablesizes);
columns:="";
for j in [1..Size(factors)] do
 columns:=Concatenation(columns,"c");
od;
for k in [1..numtables] do
if k = 1 then 
 range:=[1..tablesizes[k]];
 lowr:=1;
else
 lowr:=1; highr:=0;
 for i in [1..k-1] do
  lowr:= lowr+tablesizes[i];
  highr:= highr+tablesizes[i];
 od;
 range:=[lowr..highr+tablesizes[k]]; 
fi;
if k = 1 then 
 Print("\\begin{table}[ht] \n \\caption{$D(S_{",num,"})$:
  (exponent: $",exp,"$) Set 1} \n");
 Print("\\[ \n"); 
else
 Print("\\begin{table}[ht] \n \\caption{$D(S_{",num,"})$:
  $m = ",factors[range[1]]," \\ldots ",
  factors[range[Size(range)]], "$ Set 1} \n \\[ \n");
fi;
Print("\\begin{array}{r|",columns{range},"} \\hline \n");
for j in range do  
 if j = lowr then Print("m = "); fi;
 Print(" & ",factors[j]," \n");
od;
Print("\\rule[-7pt]{0pt}{20pt} \\\\ \\hline \n");
for z in [1..Size(samexs)] do
Print("\\nu_m(",samexs[z][1],")");
 for l in range do
  Print(" & ",sameinds[z][l]);  
 od;
 if z = 1 or IsInt((z-1)/38) then 
  Print(" \\rule[0pt]{0pt}{13pt} \\\\ \n");
 elif IsInt(z/38) then 
     # This means each 38 rows of irreducible characters, 
     # start a new table.
  Print(" \\rule[-7pt]{0pt}{5pt} \\\\ \n");
  Print("\\hline \n");
  Print("\\end{array} \n");
  Print("\\]\n\\label{table:S",num,".",Int(z/38),k,"}\n
   \\end{table} \n\n");
# end table and start a new table.
  Print("\\begin{table}[ht] \n \\caption{$D(S_{",num,"})$:
   $m = ", factors[range[1]]," \\ldots ", 
   factors[range[Size(range)]],"$ Set ", 1+Int(z/38),"}
   \n\\[ \n"); 
  Print("\\begin{array}{r|",columns{range},"} \\hline \n");
  for j in range do  
   if j = lowr then Print("m = "); fi;
   Print(" & ",factors[j]," \n");
  od;
  Print("\\rule[-7pt]{0pt}{20pt} \\\\ \\hline \n");
 elif z = Size(samexs) then
  Print(" \\rule[-7pt]{0pt}{5pt} \\\\ \n");
 else Print(" \\\\ \n");
 fi;
od;
Print("\\hline \n");
Print("\\end{array} \n");
Print("\\]");
Print("\n\\label{table:S",num,".",Int(z/38)+1,k,"}\n\\end{table}
 \n\n");
od;
Print("The ",Size(sameinds)," I-equivalency irreducible character
 classes of $D(S_",num,")$ are:\n");
Print("\\newline \n");
for i in [1..Size(sameinds)] do
 if Size(samexs[i]) > 1 then
  for j in [1..Size(samexs[i])-1] do
   if j=1 then Print("[ $",samexs[i][j],","); 
   elif IsInt(j/13) then
    Print(samexs[i][j],"$, \n\n\\hspace{.4in}$");
   else Print(samexs[i][j],",");
   fi;
   if IsInt(j/4) then Print("\n"); fi;
  od;
   Print(samexs[i][Size(samexs[i])],"$ ],\n");
 else
  Print("[ $",samexs[i][1],"$ ],\n");
 fi;
 Print("\n");
od;
return;
fi;
end;\end{verbatim}

\subsection{}
\bf Computing the Indicators of $D(S_{10})$ \rm   

To compute all the indicators of $D(S_n)$ for $n \leq 9 $ we used either FSLaTexTabs (Definition \ref{FSLaTexTabs}) or FSLaTexTabsSpecific (Definition \ref{FSLaTexTabsSpecific}), but when we attempted to use either one for computing the indicators of $D(S_{10})$, GAP produced an error message that said, ``gap: cannot extend the workspace any more ."  In short, $D(S_{10})$ was too large to compute and store all the indicators at once.  

Of all the previously given functions the only ones that did not give error messages when working with $S_{10}$ were Hmu (Definition \ref{Hmu}) and FSGmu (Definition \ref{FSGmu}).  Even attempting to use FSGu (Definition \ref{FSGu}) where we used FSGmu repeatedly to compute and store all indicators for a specific conjugacy class representative $u$ resulted in error messages indicating there was not enough space to compute and store these all at once.

\indent Since FSGu worked, we used the following command propmts and functions to calculate all the indicators of $D(S_{10})$, saved them in a single matrix M, and then used FSLaTexTabsSpecific(M, $n$, $exp$, [list of table sizes]) to print our tables. 

{\bf (1)}  First we used the command prompt code displayed below to compute all the indicators for a specific conjugacy class representative $u \in S_{10}$, one $m$ value at a time (where $m$ was a divisor of $exp = 2520$ the exponent of $S_{10}$).  Each computation for a specific $m$ took between 2 and 4 minutes.  The advantage of having each line print out one value of $m$ at a time, was we were able to save the progress in case our computer lost power, over heated or had issues before the full computation was complete.  When $u=(1\ 2)$, the code we used was:

\begin{verbatim}
>for w in DivisorsInt(9*8*7*5) do 
    Print(FSGmu(SymmetricGroup(10),w,(1,2)),",\n");
  od;
\end{verbatim}
The first four lines of its output looked like this:
\begin{verbatim}
[ 0, 0, 0, 0, 0, 0, 0, 0, 0, 0, 0, 0, 0, 0, 0, 0, 0, 0, 0, 0, 0,
  0, 0, 0, 0, 0, 0, 0, 0, 0, 0, 0, 0, 0, 0, 0, 0, 0, 0, 0, 0, 0,
  0, 0 ],
[ 1, 1, 1, 1, 1, 1, 1, 1, 1, 1, 1, 1, 1, 1, 1, 1, 1, 1, 1, 1, 1,
  1, 1, 1, 1, 1, 1, 1, 1, 1, 1, 1, 1, 1, 1, 1, 1, 1, 1, 1, 1, 1,
  1, 1 ],
[ 0, 0, 0, 0, 0, 0, 0, 0, 0, 0, 0, 0, 0, 0, 0, 0, 0, 0, 0, 0, 0,
  0, 0, 0, 0, 0, 0, 0, 0, 0, 0, 0, 0, 0, 0, 0, 0, 0, 0, 0, 0, 0,
  0, 0 ],
[ 2, 2, 2, 2, 6, 6, 6, 6, 8, 8, 8, 8, 12, 12, 12, 12, 13, 13,
  13, 13, 14, 14, 14, 14, 21, 21, 21, 21, 19, 19, 28, 28, 
  28, 28, 33, 33, 33, 33, 34, 34, 34, 34, 45, 45 ],
\end{verbatim}

{\bf (2)}  Next we saved each of these sets of data as a matrix.  Each of these matrices of indicators of irreducible characters induced from one centralizer took between 2.5 and 4.5 hours.  Since there are 42 conjugacy classes of $S_{10}$, it took about a week to compute all 42 matrices of indicators.  Once we had all the matrices, we then put each matrix through a funciton that added the corresponding characters to the matrix, found the I-equivalency classes, and condensed the matrix so there were no identical rows of indicators.

\begin{df}FSuMatRed \rm   This function takes the matrix of all calculated $ m^{\text{th}}$ Frobenius-Schur indicators of all the irreducible characters of $D(G)$ induced from the irreducbile characters of the centralizer of an element $u$ in $G$, and it returns a list of the I-equivalency classes and the corresponding matrix of their indicator values. The number $i$ indicates the number of the conjugacy class that $u$ comes from.  The first half of this function simply adds the list of characters to the matrix as they correspond to the indiciator values.  The second half of this function is identiacal to FSMatRed (Definition \ref{FSMatRed}).  The code for programming this function in GAP is provided below.  
\end{df} 
\begin{verbatim}
FSuMatRed:=function(MAT,i)
local Xs, j, st, M, INDS, XS, samexs, sameinds, count, temp,
 pos, k;
Xs:=[];
# first we add the list of characters to this matrix
for j in [1..Size(MAT[1])] do 
# number of columns in MAT[1] is number of irreducible characters
# of D(G) induced from u.
 st:=String(j);                 
 st:=Concatenation("\\chi_{",String(i),".",st,"}");   
 Xs[j]:=st;                 
od;
Add(MAT,Xs,1);       
M:=TransposedMat(MAT);
INDS:= M{[1..Size(M)]}{[2..Size(M[1])]};
XS:= M{[1..Size(M)]}{[1]};
samexs:=[]; sameinds:=[]; count:=1; temp:=[];
for k in [1..Size(INDS)] do
 if INDS[k] in sameinds then       #do nothing
 else       #if INDS[k] is not in the list sameinds yet, add it
  sameinds[count]:=INDS[k];
  temp:=Positions(INDS,INDS[k]);
  for j in [1..Size(temp)] do
   temp[j]:=XS[temp[j]][1];  
  od;
  samexs[count]:= temp;
  count:=count+1;
 fi;
od;
return [samexs, sameinds];
end;
\end{verbatim}

{\bf (3)}  After using FSuMatRed on all 42 of our matrices, we used the ``Concatenation" command in GAP to make one long list of matrix pairs - each pair consisting of the list of I-equivalency classes and the matrix of corresponding indicator values.  We then put that list through the following function to check for any mixed irreducible character I-equivalence classes. 

\begin{df} FSMat \rm This function takes a list of pairs and combines them while removing and accounting for duplications.  Each pair contains the list of irreducible character I-equivalency classes of $D(S_{10})$ as computed from one centralizer and the matrix of Frobenius Schur indicators of these classes.  This function is somewhat similar to FSIndicators (Definition  \ref{FSIndicators} The code for programming this function in GAP is provided below.  
\end{df}
\begin{verbatim}
FSMat:=function(Mats)
local size, i, INDS, XS, samexs, sameinds, count, temp, k, pos, j;
size:=Size(Mats)/2;
if IsInt(size) then 
 INDS:=[]; XS:=[];
 for i in [1.. size] do
  INDS:= Concatenation(INDS,Mats[2*i]);
  XS:= Concatenation(XS,Mats[2*i-1]);
 od;
 samexs:=[]; sameinds:=[]; count:=1; temp:=[];
 for k in [1..Size(INDS)] do
  if INDS[k] in sameinds then       #do nothing
  else       #if INDS[k] is not in the list sameinds yet, add it
   sameinds[count]:=INDS[k];
   pos:=Positions(INDS,INDS[k]); 
   temp:=[];
   for j in [1..Size(pos)] do
    temp:= Concatenation(temp,XS[pos[j]]);  
   od;
   samexs[count]:= temp;
   count:=count+1;
  fi;
 od;
 return [samexs, sameinds];
else return "bad input";
fi;
end;
\end{verbatim}

{\bf (4)} The result from FSMat gave us the matrix M, consisting of first the list of all the I-equivalency classes of $D(S_{10})$ and second the matrix of corresponding indicator values, that we needed to run FSLaTexTabsSpecific (Definition \ref{FSLaTexTabsSpecific}).

Attempting to run any of our functions using $S_{11}$ resulted in errors from GAP. \newline

\appendix
\section{Character Tables}
This appendix provides all character tables used in calculating all the higher indicators of $D(S_3)$ to $D(S_6)$.  Although the characters of these centrailzers are well known, the order in which they are displayed in tables varies from source to source.  Therefore, we include them here to make our notation of the characters of $D(S_n)$ clear.  Note that in the tables that follow, for n = 3,4 and 5, E$(n)$ denotes a primitive $n^\text{th}$ root of 1.  

\subsection{}
\bf Character Tables of the Centralizers of $S_3$ \rm

The character tabels of the centralizers in $S_3$ of conjugacy class representatives of $S_3$ are given below.  These tables were used to calculate the indicators of $D(S_3)$ recorded in Section 4.1.

\begin{table}[ht]  \caption{Character tables of Centralizers of $S_3$} 
$\begin{array}{ccc}
C_{S_3}(()) \cong S_3: & C_{S_3}((1,2))\cong C_2:  & C_{S_3}((1,2,3))\cong C_3: \\
\begin{array}{r|ccc} \hline 
 &  ()
 &  (1,2)
 & (1,2,3) 
\rule[-7pt]{0pt}{20pt} \\ \hline 
\eta_{1.1} & 1 & -1 & 1 \rule[0pt]{0pt}{13pt} \\ 
\eta_{1.2} & 2 & 0 & -1 \\ 
\eta_{1.3} & 1 & 1 & 1 \rule[-7pt]{0pt}{5pt} \\ 
\hline 
\end{array} 
&
\begin{array}{c}
\begin{array}{r|cc} \hline 
 & ()
 & (1,2) 
\rule[-7pt]{0pt}{20pt} \\ \hline 
\eta_{2.1} & 1 & -1 \rule[0pt]{0pt}{13pt} \\ 
\eta_{2.2} & 1 & 1 \rule[-7pt]{0pt}{5pt} \\ 
\hline 
\end{array} \\ \\
\end{array}
&
\begin{array}{r|ccc} \hline 
 &  ()
 & (1,2,3) 
 &  (1,3,2) 
\rule[-7pt]{0pt}{20pt} \\ \hline 
\eta_{3.1} & 1 & 1 & 1 \rule[0pt]{0pt}{13pt} \\ 
\eta_{3.2} & 1 & E(3) & E(3)^2 \\ 
\eta_{3.3} & 1 & E(3)^2 & E(3) \rule[-7pt]{0pt}{5pt} \\ 
\hline 
\end{array} 
\end{array}
$ 
\end{table}

\subsection{}
\bf Character Tables of the Centralizers of $S_4$ \rm

The character tabels of the centralizers in $S_4$ of conjugacy class representatives of $S_4$ are given below.  These tables were used to calculate the indicators of $D(S_4)$ recorded in Section 4.2.

\begin{table}[ht]  \caption{Character table of $C_{S_4}(())\cong S_4$}
$\begin{array}{r|ccccc} \hline 
 &   ()
 &  (1,2)
 &  (1,2)(3,4)
 &  (1,2,3)
 &  (1,2,3,4)
\rule[-7pt]{0pt}{20pt} \\ \hline 
\eta_{1.1} & 1 & -1 & 1 & 1 & -1 \rule[0pt]{0pt}{13pt} \\ 
\eta_{1.2} & 3 & -1 & -1 & 0 & 1 \\ 
\eta_{1.3} & 2 & 0 & 2 & -1 & 0 \\ 
\eta_{1.4} & 3 & 1 & -1 & 0 & -1 \\ 
\eta_{1.5} & 1 & 1 & 1 & 1 & 1 \rule[-7pt]{0pt}{5pt} \\ 
\hline 
\end{array} $ \end{table}


\begin{table}[ht]  \caption{Character table of $C_{S_4}((1,2))\cong \langle (1,2), (3,4)\rangle \cong C_2 \times C_2$}
$\begin{array}{r|cccc} \hline 
 & () 
 & (3,4) 
 &  (1,2) 
 & (1,2)(3,4) 
\rule[-7pt]{0pt}{20pt} \\ \hline 
\eta_{2.1} & 1 & 1 & 1 & 1 \rule[0pt]{0pt}{13pt} \\ 
\eta_{2.2} & 1 & -1 & -1 & 1 \\ 
\eta_{2.3} & 1 & -1 & 1 & -1 \\ 
\eta_{2.4} & 1 & 1 & -1 & -1 \rule[-7pt]{0pt}{5pt} \\ 
\hline 
\end{array}$ \end{table}

\begin{table}[ht]  \caption{Character table of $C_{S_4}((1,2)(3,4)) \cong  \langle (1,2), (1,3)(2,4), (3,4)\rangle \cong D_8$}
$\begin{array}{r|ccccc} \hline 
 & () & (3,4) & (1,2)(3,4) & (1,3)(2,4) & (1,3,2,4)
\rule[-7pt]{0pt}{20pt} \\ \hline 
\eta_{3.1} & 1 & 1 & 1 & 1 & 1 \rule[0pt]{0pt}{13pt} \\ 
\eta_{3.2} & 1 & -1 & 1 & -1 & 1 \\ 
\eta_{3.3} & 1 & -1 & 1 & 1 & -1 \\ 
\eta_{3.4} & 1 & 1 & 1 & -1 & -1 \\ 
\eta_{3.5} & 2 & 0 & -2 & 0 & 0 \rule[-7pt]{0pt}{5pt} \\ 
\hline 
\end{array} $ \end{table}

\begin{table}[ht]  \caption{Character table of $C_{S_4}((1,2,3)) \cong \langle (1,2,3) \rangle \cong C_3$}
$\begin{array}{r|ccc} \hline 
 & () & (1,2,3) & (1,3,2) 
\rule[-7pt]{0pt}{20pt} \\ \hline 
\eta_{4.1} & 1 & 1 & 1 \rule[0pt]{0pt}{13pt} \\ 
\eta_{4.2} & 1 & E(3) & E(3)^2 \\ 
\eta_{4.3} & 1 & E(3)^2 & E(3) \rule[-7pt]{0pt}{5pt} \\ 
\hline 
\end{array} $ \end{table} 

\clearpage

\begin{table}[ht]  \caption{Character table of $C_{S_4}((1,2,3,4)) \cong  \langle (1,2,3,4)\rangle \cong C_4$}
$\begin{array}{r|cccc} \hline 
 & () & (1,2,3,4) & (1,3)(2,4) & (1,4,3,2)
\rule[-7pt]{0pt}{20pt} \\ \hline 
\eta_{5.1} & 1 & 1 & 1 & 1 \rule[0pt]{0pt}{13pt} \\ 
\eta_{5.2} & 1 & -1 & 1 & -1 \\ 
\eta_{5.3} & 1 & E(4) & -1 & -E(4) \\ 
\eta_{5.4} & 1 & -E(4) & -1 & E(4) \rule[-7pt]{0pt}{5pt} \\ 
\hline 
\end{array} $ \end{table} 


\subsection{}
\bf Character Tables of the Centralizers of $S_5$ \rm

The character tabels of the centralizers in $S_5$ of conjugacy class representatives of $S_5$ are given below.  These tables were used to calculate the indicators of $D(S_5)$ recorded in Section 4.3.

\begin{table}[ht]  \caption{Character table of $C_{S_5}(())\cong S_5$}
$
\begin{array}{r|ccccccc} \hline 
 &  () & (1,2) & (1,2)(3,4) & (1,2,3) & (1,2,3)(4,5) & (1,2,3,4) & (1,2,3,4,5)
\rule[-7pt]{0pt}{20pt} \\ \hline 
\eta_{1.1} & 1 & -1 & 1 & 1 & -1 & -1 & 1 \rule[0pt]{0pt}{13pt} \\ 
\eta_{1.2} & 4 & -2 & 0 & 1 & 1 & 0 & -1 \\ 
\eta_{1.3} & 5 & -1 & 1 & -1 & -1 & 1 & 0 \\ 
\eta_{1.4} & 6 & 0 & -2 & 0 & 0 & 0 & 1 \\ 
\eta_{1.5} & 5 & 1 & 1 & -1 & 1 & -1 & 0 \\ 
\eta_{1.6} & 4 & 2 & 0 & 1 & -1 & 0 & -1 \\ 
\eta_{1.7} & 1 & 1 & 1 & 1 & 1 & 1 & 1 \rule[-7pt]{0pt}{5pt} \\ 
\hline 
\end{array} 
$ \end{table}

\begin{table}[ht]  \caption{Character table of $C_{S_5}((1,2))\cong \langle (1,2), (3,5), (4,5)\rangle \cong  D_{12}$}
$
\begin{array}{r|cccccc} \hline 
 &  () & (4,5) & (3,4,5) & (1,2) & (1,2)(4,5) & (1,2)(3,4,5)
\rule[-7pt]{0pt}{20pt} \\ \hline 
\eta_{2.1} & 1 & 1 & 1 & 1 & 1 & 1 \rule[0pt]{0pt}{13pt} \\ 
\eta_{2.2} & 1 & -1 & 1 & -1 & 1 & -1 \\ 
\eta_{2.3} & 1 & -1 & 1 & 1 & -1 & 1 \\ 
\eta_{2.4} & 1 & 1 & 1 & -1 & -1 & -1 \\ 
\eta_{2.5} & 2 & 0 & -1 & -2 & 0 & 1 \\ 
\eta_{2.6} & 2 & 0 & -1 & 2 & 0 & -1 \rule[-7pt]{0pt}{5pt} \\ 
\hline 
\end{array} 
$ \end{table}

\begin{table}[ht]  \caption{Character table of $C_{S_5}((1,2)(3,4))\cong \langle (1,2), (1,3)(2,4), (3,4) \rangle \cong D_8$}
$
\begin{array}{r|ccccc} \hline 
 & () & (3,4) & (1,2)(3,4) & (1,3)(2,4) & (1,3,2,4)
\rule[-7pt]{0pt}{20pt} \\ \hline 
\eta_{3.1} & 1 & 1 & 1 & 1 & 1 \rule[0pt]{0pt}{13pt} \\ 
\eta_{3.2} & 1 & -1 & 1 & -1 & 1 \\ 
\eta_{3.3} & 1 & -1 & 1 & 1 & -1 \\ 
\eta_{3.4} & 1 & 1 & 1 & -1 & -1 \\ 
\eta_{3.5} & 2 & 0 & -2 & 0 & 0 \rule[-7pt]{0pt}{5pt} \\ 
\hline 
\end{array} 
$\end{table}

\begin{table}[ht]  \caption{Character table of $C_{S_5}((1,2,3)) \cong \langle (1,2,3), (4,5) \rangle \cong C_6$}
$
\begin{array}{r|cccccc} \hline 
 & () & (4,5) & (1,2,3) & (1,2,3)(4,5) & (1,3,2) & (1,3,2)(4,5)
\rule[-7pt]{0pt}{20pt} \\ \hline 
\eta_{4.1} & 1 & 1 & 1 & 1 & 1 & 1 \rule[0pt]{0pt}{13pt} \\ 
\eta_{4.2} & 1 & -1 & 1 & -1 & 1 & -1 \\ 
\eta_{4.3} & 1 & -1 & E(3)^2 & -E(3)^2 & E(3) & -E(3) \\ 
\eta_{4.4} & 1 & -1 & E(3) & -E(3) & E(3)^2 & -E(3)^2 \\ 
\eta_{4.5} & 1 & 1 & E(3)^2 & E(3)^2 & E(3) & E(3) \\ 
\eta_{4.6} & 1 & 1 & E(3) & E(3) & E(3)^2 & E(3)^2 \rule[-7pt]{0pt}{5pt} \\ 
\hline 
\end{array} 
$\end{table}

The character table of $C_{S_5}((1,2,3)(4,5)) \cong \langle (1,2,3), (4,5) \rangle \cong C_6$ is identical to the table given just above, with the only exception being the characters are indexed by $5.j$.  Each of the 6 irreducible characters $\eta_{5.j}$ has the same character values as the corresponding irreducible characters $\eta_{4.j}$ listed in the table immediately above.

\begin{table}[ht]  \caption{Character table of $C_{S_5}((1,2,3,4))\cong  \langle (1,2,3,4)\rangle \cong C_4$}
$
\begin{array}{r|cccc} \hline 
 &  () & (1,2,3,4) & (1,3)(2,4) & (1,4,3,2)
\rule[-7pt]{0pt}{20pt} \\ \hline 
\eta_{6.1} & 1 & 1 & 1 & 1 \rule[0pt]{0pt}{13pt} \\ 
\eta_{6.2} & 1 & -1 & 1 & -1 \\ 
\eta_{6.3} & 1 & E(4) & -1 & -E(4) \\ 
\eta_{6.4} & 1 & -E(4) & -1 & E(4) \rule[-7pt]{0pt}{5pt} \\ 
\hline 
\end{array} 
$\end{table}

\begin{table}[ht]  \caption{Character table of $C_{S_5}((1,2,3,4,5)) \cong \langle (1,2,3,4,5) \rangle \cong C_5$}
$
\begin{array}{r|ccccc} \hline 
 &  () & (1,2,3,4,5) & (1,3,5,2,4) & (1,4,2,5,3) & (1,5,4,3,2)
\rule[-7pt]{0pt}{20pt} \\ \hline 
\eta_{7.1} & 1 & 1 & 1 & 1 & 1 \rule[0pt]{0pt}{13pt} \\ 
\eta_{7.2} & 1 & E(5) & E(5)^2 & E(5)^3 & E(5)^4 \\ 
\eta_{7.3} & 1 & E(5)^2 & E(5)^4 & E(5) & E(5)^3 \\ 
\eta_{7.4} & 1 & E(5)^3 & E(5) & E(5)^4 & E(5)^2 \\ 
\eta_{7.5} & 1 & E(5)^4 & E(5)^3 & E(5)^2 & E(5) \rule[-7pt]{0pt}{5pt} \\ 
\hline 
\end{array} 
$\end{table}
%

\subsection{}
\bf Character Tables of the Centralizers of $S_6$ \rm

The character tabels of the centralizers in $S_6$ of conjugacy class representatives of $S_6$ are given below.  These tables were used to calculate the indicators of $D(S_6)$ recorded in Section 4.4.
\clearpage

\begin{table}[ht]  \caption{Character table of $C_{S_6}(())\cong S_6$}
$
\begin{array}{r|ccccccccccc} \hline 
 & 1a 
 & 2a 
 & 2b 
 & 2c 
 & 3a 
 & 6a 
 & 3b 
 & 4a 
 & 4b 
 & 5a 
 & 6b 
\rule[-7pt]{0pt}{20pt} \\ \hline 
\eta_{1.1} & 1 & -1 & 1 & -1 & 1 & -1 & 1 & -1 & 1 & 1 & 
-1 \rule[0pt]{0pt}{13pt} \\ 
\eta_{1.2} & 5 & -3 & 1 & 1 & 2 & 0 & -1 & -1 & -1 & 0 & 1 \\ 
\eta_{1.3} & 9 & -3 & 1 & -3 & 0 & 0 & 0 & 1 & 1 & -1 & 0 \\ 
\eta_{1.4} & 5 & -1 & 1 & 3 & -1 & -1 & 2 & 1 & -1 & 0 & 0 \\ 
\eta_{1.5} & 10 & -2 & -2 & 2 & 1 & 1 & 1 & 0 & 0 & 0 & -1 \\ 
\eta_{1.6} & 16 & 0 & 0 & 0 & -2 & 0 & -2 & 0 & 0 & 1 & 0 \\ 
\eta_{1.7} & 5 & 1 & 1 & -3 & -1 & 1 & 2 & -1 & -1 & 0 & 0 \\ 
\eta_{1.8} & 10 & 2 & -2 & -2 & 1 & -1 & 1 & 0 & 0 & 0 & 1 \\ 
\eta_{1.9} & 9 & 3 & 1 & 3 & 0 & 0 & 0 & -1 & 1 & -1 & 0 \\ 
\eta_{1.10} & 5 & 3 & 1 & -1 & 2 & 0 & -1 & 1 & -1 & 0 & -1 \\ 
\eta_{1.11} & 1 & 1 & 1 & 1 & 1 & 1 & 1 & 1 & 1 & 1 & 
1 \rule[-7pt]{0pt}{5pt} \\ 
\hline 
\end{array}  
$ \end{table}
Here 1a = (), 2a = (1,2), 2b = (1,2)(3,4), 2c = (1,2)(3,4)
(5,6), 3a = (1,2,3), 6a = (1,2,3)(4,5), 3b = (1,2,3)(4,5,6), 4a = 
(1,2,3,4), 4b = (1,2,3,4)(5,6), 5a = (1,2,3,4,5), 6b = (1,2,3,4,5,6).\newline

\begin{table}[ht]  \caption{Character table of $C_{S_6}((1,2))\cong \langle (1,2), (3,6), (4,6), (5,6)\rangle$}
$
\begin{array}{r|cccccccccc} \hline 
 & 1a 
 & 2a 
 & 3a 
 & 2b 
 & 4a 
 & 2c 
 & 2d 
 & 6a 
 & 2e 
 & 4b 
\rule[-7pt]{0pt}{20pt} \\ \hline 
\eta_{2.1} & 1 & 1 & 1 & 1 & 1 & 1 & 1 & 1 & 1 & 1 \rule[0pt]{0pt}{13pt} \\ 
\eta_{2.2} & 1 & -1 & 1 & 1 & -1 & -1 & 1 & -1 & -1 & 1 \\ 
\eta_{2.3} & 1 & -1 & 1 & 1 & -1 & 1 & -1 & 1 & 1 & -1 \\ 
\eta_{2.4} & 1 & 1 & 1 & 1 & 1 & -1 & -1 & -1 & -1 & -1 \\ 
\eta_{2.5} & 2 & 0 & -1 & 2 & 0 & -2 & 0 & 1 & -2 & 0 \\ 
\eta_{2.6} & 2 & 0 & -1 & 2 & 0 & 2 & 0 & -1 & 2 & 0 \\ 
\eta_{2.7} & 3 & -1 & 0 & -1 & 1 & -3 & 1 & 0 & 1 & -1 \\ 
\eta_{2.8} & 3 & -1 & 0 & -1 & 1 & 3 & -1 & 0 & -1 & 1 \\ 
\eta_{2.9} & 3 & 1 & 0 & -1 & -1 & -3 & -1 & 0 & 1 & 1 \\ 
\eta_{2.10} & 3 & 1 & 0 & -1 & -1 & 3 & 1 & 0 & -1 & 
-1 \rule[-7pt]{0pt}{5pt} \\ 
\hline 
\end{array} 
$ \end{table}
Here 1a = (), 2a = (5,6), 3a = (4,5,6), 2b = (3,4)(5,6), 4a = (3,4,5,6), 2c = (1,2), 2d = (1,2)(5,6), 6a = (1,2)(4,5,6), 2e = (1,2)(3,4)(5,6), 4b = (1,2)(3,4,5,6).

\clearpage

\begin{table}[ht]  \caption{Character table of $C_{S_6}((1,2)(3,4))\cong \langle (1,2), (1,3)(2,4), (3,4), (5,6) \rangle \cong D_8  \times C_2$}
$
\begin{array}{r|cccccccccc} \hline 
 & 1a 
 & 2a 
 & 2b 
 & 2c 
 & 2d 
 & 2e 
 & 2f 
 & 2g 
 & 4a 
 & 4b 
\rule[-7pt]{0pt}{20pt} \\ \hline 
\eta_{3.1} & 1 & 1 & 1 & 1 & 1 & 1 & 1 & 1 & 1 & 1 \rule[0pt]{0pt}{13pt} \\ 
\eta_{3.2} & 1 & -1 & -1 & 1 & 1 & -1 & -1 & 1 & 1 & -1 \\ 
\eta_{3.3} & 1 & -1 & -1 & 1 & 1 & -1 & 1 & -1 & -1 & 1 \\ 
\eta_{3.4} & 1 & -1 & 1 & -1 & 1 & -1 & -1 & 1 & -1 & 1 \\ 
\eta_{3.5} & 1 & -1 & 1 & -1 & 1 & -1 & 1 & -1 & 1 & -1 \\ 
\eta_{3.6} & 1 & 1 & -1 & -1 & 1 & 1 & -1 & -1 & 1 & 1 \\ 
\eta_{3.7} & 1 & 1 & -1 & -1 & 1 & 1 & 1 & 1 & -1 & -1 \\ 
\eta_{3.8} & 1 & 1 & 1 & 1 & 1 & 1 & -1 & -1 & -1 & -1 \\ 
\eta_{3.9} & 2 & 2 & 0 & 0 & -2 & -2 & 0 & 0 & 0 & 0 \\ 
\eta_{3.10} & 2 & -2 & 0 & 0 & -2 & 2 & 0 & 0 & 0 & 0 \rule[-7pt]{0pt}{5pt} \\ 
\hline 
\end{array} 
$\end{table}
Here 1a = (), 2a = (5,6), 2b = (3,4), 2c = (3,4)(5,6), 2d = (1,2)(3,4), 2e = (1,2)(3,4)(5,6), 2f = (1,3)(2,4), 2g = (1,3)(2,4)(5,6), 4a = (1,3,2,4), 4b = (1,3,2,4)(5,6).

\begin{table}[ht]  \caption{Character table of $C_{S_6}((1,2)(3,4)(5,6)) \cong \langle (1,2), (1,5)(2,6), (3,4), (3,5)(4,6), (5,6) \rangle$}
$
\begin{array}{r|cccccccccc} \hline 
 & 1a 
 & 2a 
 & 2b 
 & 2c 
 & 4a 
 & 2d 
 & 2e 
 & 4b 
 & 3a 
 & 6a 
\rule[-7pt]{0pt}{20pt} \\ \hline 
\eta_{4.1} & 1 & 1 & 1 & 1 & 1 & 1 & 1 & 1 & 1 & 1 \rule[0pt]{0pt}{13pt} \\ 
\eta_{4.2} & 1 & -1 & 1 & -1 & 1 & -1 & 1 & -1 & 1 & -1 \\ 
\eta_{4.3} & 1 & -1 & 1 & 1 & -1 & -1 & -1 & 1 & 1 & -1 \\ 
\eta_{4.4} & 1 & 1 & 1 & -1 & -1 & 1 & -1 & -1 & 1 & 1 \\ 
\eta_{4.5} & 2 & -2 & 2 & 0 & 0 & -2 & 0 & 0 & -1 & 1 \\ 
\eta_{4.6} & 2 & 2 & 2 & 0 & 0 & 2 & 0 & 0 & -1 & -1 \\ 
\eta_{4.7} & 3 & -1 & -1 & -1 & 1 & 3 & -1 & 1 & 0 & 0 \\ 
\eta_{4.8} & 3 & -1 & -1 & 1 & -1 & 3 & 1 & -1 & 0 & 0 \\ 
\eta_{4.9} & 3 & 1 & -1 & -1 & -1 & -3 & 1 & 1 & 0 & 0 \\ 
\eta_{4.10} & 3 & 1 & -1 & 1 & 1 & -3 & -1 & -1 & 0 & 
0 \rule[-7pt]{0pt}{5pt} \\ 
\hline 
\end{array} 
$\end{table}
Here 1a = (), 2a = (5,6), 2b = (3,4)(5,6), 2c = (3,5)(4,6), 4a = (3,5,4,6), 2d = (1,2)(3,4)(5,6), 2e = (1,2)(3,5)(4,6), 4b = (1,2)(3,5,4,6), 3a = (1,3,5)(2,4,6), 6a = (1,3,5,2,4,6).
\newpage

\begin{table}[ht]  \caption{Character table of $C_{S_6}((1,2,3))\cong \langle (1,2,3), (4,6), (5,6) \rangle$}
$
\begin{array}{r|ccccccccc} \hline 
 & 1a 
 & 2a 
 & 3a 
 & 3b 
 & 6a 
 & 3c 
 & 3d 
 & 6b 
 & 3e 
\rule[-7pt]{0pt}{20pt} \\ \hline 
\eta_{5.1} & 1 & 1 & 1 & 1 & 1 & 1 & 1 & 1 & 1 \rule[0pt]{0pt}{13pt} \\ 
\eta_{5.2} & 1 & -1 & 1 & 1 & -1 & 1 & 1 & -1 & 1 \\ 
\eta_{5.3} & 1 & -1 & 1 & E(3)^2 & -E(3)^2 & E(3)^2 & E(3) & -E(3) & E(3) \\ 
\eta_{5.4} & 1 & -1 & 1 & E(3) & -E(3) & E(3) & E(3)^2 & -E(3)^2 & E(3)^2 \\ 
\eta_{5.5} & 1 & 1 & 1 & E(3)^2 & E(3)^2 & E(3)^2 & E(3) & E(3) & E(3) \\ 
\eta_{5.6} & 1 & 1 & 1 & E(3) & E(3) & E(3) & E(3)^2 & E(3)^2 & E(3)^2 \\ 
\eta_{5.7} & 2 & 0 & -1 & 2 & 0 & -1 & 2 & 0 & -1 \\ 
\eta_{5.8} & 2 & 0 & -1 & 2*E(3) & 0 & -E(3) & 2*E(3)^2 & 0 & -E(3)^2 \\ 
\eta_{5.9} & 2 & 0 & -1 & 2*E(3)^2 & 0 & -E(3)^2 & 2*E(3) & 0 & 
-E(3) \rule[-7pt]{0pt}{5pt} \\ 
\hline 
\end{array} 
$\end{table}
Here 1a = (), 2a = (5,6), 3a = (4,5,6), 3b = (1,2,3), 6a = (1,2,3)(5,6), 3c = (1,2,3)(4,5,6), 3d = (1,3,2), 6b = (1,3,2)(5,6), 3e = (1,3,2)(4,5,6).

\begin{table}[ht]  \caption{Character table of $C_{S_6}((1,2,3)(4,5))\cong \langle (1,2,3), (4,5) \rangle \cong C_6$}
$
\begin{array}{r|cccccc} \hline 
 & 1a 
 & 2a 
 & 3a 
 & 6a 
 & 3b 
 & 6b 
\rule[-7pt]{0pt}{20pt} \\ \hline 
\eta_{6.1} & 1 & 1 & 1 & 1 & 1 & 1 \rule[0pt]{0pt}{13pt} \\ 
\eta_{6.2} & 1 & -1 & 1 & -1 & 1 & -1 \\ 
\eta_{6.3} & 1 & -1 & E(3)^2 & -E(3)^2 & E(3) & -E(3) \\ 
\eta_{6.4} & 1 & -1 & E(3) & -E(3) & E(3)^2 & -E(3)^2 \\ 
\eta_{6.5} & 1 & 1 & E(3)^2 & E(3)^2 & E(3) & E(3) \\ 
\eta_{6.6} & 1 & 1 & E(3) & E(3) & E(3)^2 & E(3)^2 \rule[-7pt]{0pt}{5pt} \\ 
\hline 
\end{array} 
$\end{table}
Here 1a = (), 2a = (4,5), 3a = (1,2,3), 6a = (1,2,3)(4,5), 3b = (1,3,2), 6b = (1,3,2)(4,5). 

\begin{table}[ht]  \caption{Character table of $C_{S_6}((1,2,3)(4,5,6))\cong \langle (1,2,3), (1,4)(2,5)(3,6), (4,5,6) \rangle$}
$
\begin{array}{r|ccccccccc} \hline 
 & 1a 
 & 3a 
 & 3b 
 & 3c 
 & 3d 
 & 3e 
 & 2a 
 & 6a 
 & 6b 
\rule[-7pt]{0pt}{20pt} \\ \hline 
\eta_{7.1} & 1 & 1 & 1 & 1 & 1 & 1 & 1 & 1 & 1 \rule[0pt]{0pt}{13pt} \\ 
\eta_{7.2} & 1 & 1 & 1 & 1 & 1 & 1 & -1 & -1 & -1 \\ 
\eta_{7.3} & 1 & E(3)^2 & E(3) & E(3) & 1 & E(3)^2 & -1 & -E(3)^2 & -E(3) \\ 
\eta_{7.4} & 1 & E(3) & E(3)^2 & E(3)^2 & 1 & E(3) & -1 & -E(3) & -E(3)^2 \\ 
\eta_{7.5} & 1 & E(3)^2 & E(3) & E(3) & 1 & E(3)^2 & 1 & E(3)^2 & E(3) \\ 
\eta_{7.6} & 1 & E(3) & E(3)^2 & E(3)^2 & 1 & E(3) & 1 & E(3) & E(3)^2 \\ 
\eta_{7.7} & 2 & -1 & -1 & 2 & -1 & 2 & 0 & 0 & 0 \\ 
\eta_{7.8} & 2 & -E(3) & -E(3)^2 & 2*E(3)^2 & -1 & 2*E(3) & 0 & 0 & 0 \\ 
\eta_{7.9} & 2 & -E(3)^2 & -E(3) & 2*E(3) & -1 & 2*E(3)^2 & 0 & 0 & 
0 \rule[-7pt]{0pt}{5pt} \\ 
\hline 
\end{array} 
$\end{table}
Here 1a = (), 3a = (4,5,6), 3b = (4,6,5), 3c = (1,2,3)(4,5,6), 3d = (1,2,3)(4,6,5), 3e = (1,3,2)(4,6,5), 2a = (1,4)(2,5)(3,6), 6a = (1,4,2,5,3,6), 6b = (1,4,3,6,2,5).
\clearpage

\begin{table}[ht]  \caption{Character table of $C_{S_6}((1,2,3,4))\cong \langle (1,2,3,4), (5,6) \rangle \cong C_4 \times C_2$}
$\begin{array}{r|cccccccc} \hline 
 & 1a 
 & 2a 
 & 4a 
 & 4b 
 & 2b 
 & 2c 
 & 4c 
 & 4d 
\rule[-7pt]{0pt}{20pt} \\ \hline 
\eta_{8.1} & 1 & 1 & 1 & 1 & 1 & 1 & 1 & 1 \rule[0pt]{0pt}{13pt} \\ 
\eta_{8.2} & 1 & -1 & -1 & 1 & 1 & -1 & -1 & 1 \\ 
\eta_{8.3} & 1 & -1 & 1 & -1 & 1 & -1 & 1 & -1 \\ 
\eta_{8.4} & 1 & 1 & -1 & -1 & 1 & 1 & -1 & -1 \\ 
\eta_{8.5} & 1 & -1 & -E(4) & E(4) & -1 & 1 & E(4) & -E(4) \\ 
\eta_{8.6} & 1 & -1 & E(4) & -E(4) & -1 & 1 & -E(4) & E(4) \\ 
\eta_{8.7} & 1 & 1 & -E(4) & -E(4) & -1 & -1 & E(4) & E(4) \\ 
\eta_{8.8} & 1 & 1 & E(4) & E(4) & -1 & -1 & -E(4) & 
-E(4) \rule[-7pt]{0pt}{5pt} \\ 
\hline 
\end{array} 
$\end{table}
Here 1a = (), 2a = (5,6), 4a = (1,2,3,4), 4b = (1,2,3,4)(5,6), 2b = (1,3)(2,4), 2c = (1,3)(2,4)(5,6), 4c = (1,4,3,2), 4d = (1,4,3,2)(5,6). \newline

The character table of $C_{S_6}((1,2,3,4)(5,6)) \cong \langle (1,2,3,4), (5,6) \rangle \cong C_4 \times C_2$ is identical to the table given just above, with the only exception being the characters are indexed by $9.j$.  Each of the 8 irreducible characters $\eta_{9.j}$ has the same character values as the corresponding irreducible characters $\eta_{8.j}$ listed in the table immediately above.

\begin{table}[ht]  \caption{Character table of $C_{S_6}((1,2,3,4,5))\cong \langle (1,2,3,4,5) \rangle \cong C_5$}
$
\begin{array}{r|ccccc} \hline 
 & 1a 
 & 5a 
 & 5b 
 & 5c 
 & 5d 
\rule[-7pt]{0pt}{20pt} \\ \hline 
\eta_{10.1} & 1 & 1 & 1 & 1 & 1 \rule[0pt]{0pt}{13pt} \\ 
\eta_{10.2} & 1 & E(5) & E(5)^2 & E(5)^3 & E(5)^4 \\ 
\eta_{10.3} & 1 & E(5)^2 & E(5)^4 & E(5) & E(5)^3 \\ 
\eta_{10.4} & 1 & E(5)^3 & E(5) & E(5)^4 & E(5)^2 \\ 
\eta_{10.5} & 1 & E(5)^4 & E(5)^3 & E(5)^2 & E(5) \rule[-7pt]{0pt}{5pt} \\ 
\hline 
\end{array} 
$
\end{table}
Here 1a = (), 5a = (1,2,3,4,5), 5b = (1,3,5,2,4), 5c = (1,4,2,5,3), 5d = (1,5,4,3,2).\newline

\begin{table}[ht]  \caption{Character table of $C_{S_6}((1,2,3,4,5,6))\cong \langle (1,2,3,4,5,6) \rangle \cong C_6$}
$\begin{array}{r|cccccc} \hline 
 & 1a 
 & 6a 
 & 3a 
 & 2a 
 & 3b 
 & 6b 
\rule[-7pt]{0pt}{20pt} \\ \hline 
\eta_{11.1} & 1 & 1 & 1 & 1 & 1 & 1 \rule[0pt]{0pt}{13pt} \\ 
\eta_{11.2} & 1 & -1 & 1 & -1 & 1 & -1 \\ 
\eta_{11.3} & 1 & E(3)^2 & E(3) & 1 & E(3)^2 & E(3) \\ 
\eta_{11.4} & 1 & -E(3)^2 & E(3) & -1 & E(3)^2 & -E(3) \\ 
\eta_{11.5} & 1 & E(3) & E(3)^2 & 1 & E(3) & E(3)^2 \\ 
\eta_{11.6} & 1 & -E(3) & E(3)^2 & -1 & E(3) & -E(3)^2 \rule[-7pt]{0pt}{5pt} \\ 
\hline 
\end{array} 
$\end{table}
Here 1a = (), 6a = (1,2,3,4,5,6), 3a = (1,3,5)(2,4,6), 2a = (1,4)(2,5)(3,6), 3b = (1,5,3)(2,6,4), 6b = (1,6,5,4,3,2).

{\bf Acknowledgement:} This paper is part of the author's Ph.D. dissertation at the University of Southern California, under the direction of S. Montgomery.  The author would like to thank S. Montgomery and also M. Iovanov, for helpful comments. 



\begin{thebibliography}{99}

\bibitem [B1]{Ban1} P. Bantay, The Frobenius-Schur indicator in conformal field theory, \it Physics Lett. B \rm 394 (1997), no. 1-2, 87-88.

\bibitem [B2]{Ban2} P. Bantay, Frobenius-Schur indicators, the Klein-bottle amplitude, and the principle of orbifold covariance, \it Phys. Lett. B \rm 488 (200), 207-210.

\bibitem [FGSV]{Fuc} J. Fuchs, A. Ch. Ganchev, K. Szlach\'{a}nyi and P. Vecserny\'{e}s, $S_4$ symmetry of $6j$ symbols and Frobenius-Schur indicators in rigid monoidal $C^*$ categories. \it J. Math. Phys. \rm 40 (1999), no. 1, 408-426.

\bibitem [GAP]{GAP4} The GAP~Group, GAP -- Groups, Algorithms, and Programming,   Version 4.4.12;   2008,  \verb+(http://www.gap-system.org)+.

\bibitem [IMM]{Iov} M. Iovanov, G. Mason, S. Montgomery, in preparation.

\bibitem [K]{Kas} Y. Kashina, On semisimple Hopf algebras of dimensions 2$^m$, \it Algebras and Representation Theory \rm 6 (2003), no. 4, 393-425.

\bibitem [Ke]{Kei} M. Keilberg, Higher indicators for some groups and their doubles, {\tt arXiv:1004.1763v1 [math.RT]}.

\bibitem [KMM]{Kasm} Y. Kashina, G. Mason and S. Montgomery, Computing the Frobenius-Shur indicator for abelian extenstions of Hopf algebras, \it J. Algebra \rm 251 (2002), 888-913.

\bibitem [KSZ1]{Kas1} Y. Kashina, Y. Sommerh\"{a}user, and Y. Zhu, Self-dual modules of semisimple Hopf algebras, \it J. Algebra \rm 257 (2002), 88-96.

\bibitem [KSZ2]{Kas2} Y. Kashina, Y. Sommerh\"{a}user, and Y. Zhu, On higher Frobenius-Schur indicators, \it AMS Memoirs \rm 181 (2006), no. 855.

\bibitem [LM]{Lin} V. Linchenko and S. Montgomery, A Frobenius-Schur theorem for Hopf algebras, \it Algebras and Representation Theory, \rm 3 (2000), 347-355.

\bibitem [MaN]{Mas} G. Mason and S-H. Ng, Central invariants and Frobenius-Schur indicators for semisimple quais-Hopf algebras, \it Advances in Math \rm 190 (2005), 161-195.

\bibitem [Mo]{Mon} S. Montgomery, \it Hofp Algebras and their Actions of Rings\rm, CBMS Lectures, Vol. 82, AMS, Providence, RI, 1993.

\bibitem [N1]{Nat1} S. Natale, On group-theoretical Hopf algebras and exact factorizations of finite groups, \it J. Algebra \rm 270 (2003), 199-211.

\bibitem [N2]{Nat2} S. Natale, Frobenius-Schur indicators for a class of fusion categories, \it Pacific J. Math \rm 221 (2005), 363-377.

\bibitem [NS1]{Ng1} S-H. Ng and P. Schauenburg, Central invariants and higher indicators for semisimple Quasi-Hopf algebras, Trans. Amer. Math Soc. 360 (2008), 1839-1860.

\bibitem [NS2]{Ng2}  S-H. Ng and P. Schauenburg, Frobenius-Schur indicators and exponents of spherical categories, \it Advances in Math \rm 211 (2007) no. 1, 34-71.

\bibitem [NS3]{Ng3}  S-H. Ng and P. Schauenburg, Higher Frobenius-Schur indicators for pivotal categories. \it Hopf algebras and generalizations\rm, 63-90, Comtemp. Math., 441, \it Amer. Math. Soc.\rm, Providence, RI, 2007. 

\bibitem [R]{Rot} J. J. Rotman, \it An Introduction to the Theory of Groups\rm, Springer-Verlag, Berlin and New York, 1995.

\bibitem [S]{Sch} T. Scharf, Die Wurzelanzahlfunktion in symmetrischen Gruppen, \it J. Algebra \rm 139 (1991), pp. 446-457.

\bibitem [Se]{Ser} J. P. Serre, \it Linear Representations of Finite Groups\rm, Springer-Verlag, Berlin and New York, 1977.


\end{thebibliography}
\end{document}